\NewDocumentCommand\mres{o}{r\IfValueT{#1}{{_#1}}}
\NewDocumentCommand\length{m r()}{\ccl_{#1}\left(#2\right)}
\newtheorem{theorem}{Theorem}[section] 
\newtheorem{lemma}[theorem]{Lemma}     
\newtheorem{corollary}[theorem]{Corollary}
\newtheorem{conjecture}[theorem]{Conjecture}
\theoremstyle{definition}
\newtheorem{example}[theorem]{Example}
\newtheorem{remark}[theorem]{Remark}
\renewcommand\footnotemark{}
\title{On the stable Harbourne conjecture \\ for ideals defining space monomial curves} 
\author{Kosuke Fukumuro and Yuki Irie\thanks{The second author is the corresponding author and was partially supported by JSPS KAKENHI Grant Number JP20K14277.}}
\date{}
\begin{document}

\maketitle

\begin{abstract}
For the ideal $\clp$ in $\cck[\ccx, \ccy, \ccz]$ defining a space monomial curve, we show that
$\clp^{(2 \ccn - 1)} \subseteq \clm \clp^{\ccn}$ for some positive integer $\ccn$,
where $\clm$ is the maximal ideal $(\ccx, \ccy, \ccz)$. Moreover, the smallest such $\ccn$ is determined.
It turns out that there is a counterexample to a claim due to Grifo, Huneke, and Mukundan, which states that
$\clp^{(3)} \subseteq \clm \clp^2$ if $\cck$ is a field of characteristic not $3$;
however, the stable Harbourne conjecture holds for space monomial curves as they claimed.
\end{abstract}

\section{Introduction}
\label{sec:org7d983f2}
Grifo \cite{grifo-stable-2020} proposed the following stable version of the Harbourne conjecture \cite{bauer-primer-2009, harbourne-are-2013}.

\begin{conjecture}[The stable Harbourne conjecture \cite{grifo-stable-2020}]
 \comment{Conjecture. [The stable Harbourne conjecture \cite{grifo-stable-2020}]}
\label{sec:org48c86d5}
If \(\ccI\) is a radical ideal of big height \(\ccc\) in a regular ring, then
\begin{equation}
\label{org6e11cdb}
 \ccI^{(\ccc \ccn - \ccc + 1)} \subseteq \ccI^{\ccn} \tfor \ccn \gg 0,
\end{equation}
where \(\ccI^{(\ccn)}\) is the \(\ccn\)th symbolic power of \(\ccI\).
 
\end{conjecture}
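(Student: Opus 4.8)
The plan is to split \eqref{org6e11cdb} according to the asymptotic resurgence of $\ccI$ and then to reduce the decisive case to a single sharpened containment. Write $\mathfrak m$ for the maximal ideal of the regular ring and recall the Ein--Lazarsfeld--Smith / Hochster--Huneke containment $\ccI^{(\ccc\ccn)} \subseteq \ccI^{\ccn}$, which for the asymptotic resurgence $\widehat\rho(\ccI) = \sup\{a/b : \ccI^{(at)} \not\subseteq \ccI^{bt}\text{ for }t \gg 0\}$ reads $\widehat\rho(\ccI) \le \ccc$. Since the exponent ratios satisfy $(\ccc\ccn - \ccc + 1)/\ccn = \ccc - (\ccc-1)/\ccn \nearrow \ccc$, the first case is painless: if $\widehat\rho(\ccI) < \ccc$, then $(\ccc\ccn-\ccc+1)/\ccn > \widehat\rho(\ccI)$ for all large $\ccn$, and the definition of $\widehat\rho$ delivers $\ccI^{(\ccc\ccn-\ccc+1)} \subseteq \ccI^{\ccn}$, i.e.\ exactly \eqref{org6e11cdb}. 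All the difficulty is thus pushed into the boundary case $\widehat\rho(\ccI) = \ccc$, where the available ratios stay strictly below $\ccc$ and the asymptotic-resurgence bound alone gives nothing.

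For the boundary case I would pass to the $\mathfrak m$-adic refinement in the spirit of Grifo--Huneke--Mukundan: the target is to exhibit a single index $\ccn_0$ with the strict containment $\ccI^{(\ccc\ccn_0 - \ccc + 1)} \subseteq \mathfrak m\, \ccI^{\ccn_0}$. The point of this one extra factor of $\mathfrak m$ is that it bootstraps: combining it with the multiplicativity $\ccI^{(a)}\ccI^{(b)} \subseteq \ccI^{(a+b)}$ of the symbolic filtration and the Ein--Lazarsfeld--Smith bound propagates the $\mathfrak m$-containment to every higher level, which in turn forces honest containments $\ccI^{(\ccc\ccn-\ccc+1)} \subseteq \ccI^{\ccn}$ for all $\ccn \ge \ccn_0$. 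So the general conjecture reduces cleanly to producing, for an arbitrary radical ideal $\ccI$ of big height $\ccc$, one level at which the Ein--Lazarsfeld--Smith containment can be strictly improved by a factor of $\mathfrak m$.

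The crux, and what I expect to be the genuine obstacle, is establishing that single $\mathfrak m$-containment unconditionally. There is no general structural reason why the sharp Ein--Lazarsfeld--Smith containment should improve at the boundary $\widehat\rho(\ccI) = \ccc$, and this is precisely the point at which the conjecture remains open. My realistic strategy would therefore be to prove it family by family, using whatever explicit description of the symbolic Rees algebra is available: for monomial ideals one reads the containment off the Newton polyhedron; for determinantal and Pfaffian ideals one exploits the known generation of symbolic powers; and for the space-monomial-curve primes of this paper one computes $\ccI^{(\ccc\ccn_0-\ccc+1)}$ explicitly and checks, by a degree (or valuation) count on a minimal generator, that it lands in $\mathfrak m\,\ccI^{\ccn_0}$. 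I would expect this program to succeed on each structured class while resisting a uniform treatment, so that the conjecture in full generality stands or falls precisely with the uniform production of that one $\mathfrak m$-containment.
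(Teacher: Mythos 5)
There is a genuine gap, and it is unavoidable: the statement you are trying to prove is the stable Harbourne \emph{conjecture} itself, which is open, and the paper never claims a proof of it --- it proves only the special case of ideals defining space monomial curves (via Theorem~\ref{org34085be} of Grifo--Huneke--Mukundan together with Theorems~\ref{orgeb6e5b6} and~\ref{org204a6ab}). Your proposal is likewise not a proof: your third paragraph explicitly concedes that the decisive step (producing a single containment $\ccI^{(\ccc\ccn_0-\ccc+1)} \subseteq \clm\ccI^{\ccn_0}$ for an arbitrary radical ideal) is ``precisely the point at which the conjecture remains open,'' so what you have written is a reduction strategy plus a research program, not an argument establishing \eqref{org6e11cdb}.

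Beyond that structural point, two steps of the reduction itself are flawed. First, your ``painless case'' does not follow from the definition you give: if $\widehat\rho(\ccI)=\sup\{a/b : \ccI^{(at)} \not\subseteq \ccI^{bt}\ \text{for } t \gg 0\}$, then $(\ccc\ccn-\ccc+1)/\ccn > \widehat\rho(\ccI)$ only tells you that $\ccI^{((\ccc\ccn-\ccc+1)t)} \subseteq \ccI^{\ccn t}$ for \emph{some} arbitrarily large $t$; it does not yield the containment at $t=1$. For that argument you need the honest resurgence $\rho(\ccI)<\ccc$, or a nontrivial theorem linking asymptotic resurgence to eventual containments --- which is exactly what Theorem~\ref{org34085be} provides, under hypotheses you omit. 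Second, the ``bootstrap'' in your middle paragraph is not automatic: the Grifo--Huneke--Mukundan criterion requires $\ccI$ to be quasi-homogeneous in a polynomial ring (so that $\clm$ is the irrelevant maximal ideal --- in an arbitrary regular ring there is no such $\clm$) \emph{and} requires $\ccI^{(\ccl)}_{\clp} = \ccI^{\ccl}_{\clp}$ for all primes $\clp \neq \clm$; a single $\clm$-containment plus multiplicativity of symbolic powers does not by itself propagate. Finally, note that for the family treated in this paper your program is more delicate than ``check one generator'': the natural candidate containment $\clp^{(3)} \subseteq \clm\clp^2$ actually \emph{fails} for some space monomial curves (Theorem~\ref{orgeb6e5b6} and Example~\ref{orgd5c7733}), and the paper must compute symbolic powers up to level $\mres[\ccM]+2$ and take $\ccn = \lfloor(\mres[\ccM]+1)/2\rfloor + 1$ to land the containment \eqref{equ:thm:stable-harbourne}.
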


\comment{connect}
\label{sec:orgcb6a6d9}
The next theorem gives a sufficient condition to satisfy the stable Harbourne conjecture.

\begin{theorem}[Grifo, Huneke, and Mukundan \cite{grifo-expected-2020}]
 \comment{Thm. [Grifo, Huneke, and Mukundan \cite{grifo-expected-2020}]}
\label{sec:org172e0ab}
\label{org34085be}
Let \(\ccI\) be a quasi-homogeneous radical ideal of big height \(\ccc\) in a polynomial ring over a field with
irrelevant maximal ideal \(\clm\). If
\begin{equation}
\label{equ:thm-ghm}
  \ccI^{(\ccc \ccn - \ccc + 1)} \subseteq \clm \ccI^{\ccn} \tforsome \ccn,
\end{equation}
and \(\ccI^{(\ccl)}_{\clp} = \ccI^l_{\clp}\) for all prime ideals \(\clp\) with \(\clp \neq \clm\) and for all positive integers \(\ccl\), then
\[
 \sup \Set{\frac{\ccm}{\ccs} : \ccI^{(\ccm)} \not \subseteq \ccI^\ccs} < \ccc;
\]
in particular, \(\ccI\) satisfies the stable Harbourne conjecture.
 
\end{theorem}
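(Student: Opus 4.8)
The plan is to prove the stronger numerical statement that the resurgence
\[
\rho(\ccI) := \sup\Set{\ccm/\ccs : \ccI^{(\ccm)} \not\subseteq \ccI^{\ccs}}
\]
satisfies $\rho(\ccI) \le \ccc - (\ccc-1)/\ccn < \ccc$, where $\ccn$ denotes a fixed exponent for which the hypothesis $\ccI^{(\ccc\ccn-\ccc+1)} \subseteq \clm\ccI^{\ccn}$ holds. Granting this, the last clause is immediate: as $t \to \infty$ the ratio $(\ccc t - \ccc + 1)/t = \ccc - (\ccc-1)/t$ increases to $\ccc > \rho(\ccI)$, so for $t \gg 0$ it exceeds $\rho(\ccI)$ and hence $\ccI^{(\ccc t - \ccc + 1)} \subseteq \ccI^{t}$, which is \eqref{org6e11cdb}. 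I first record that the hypothesis forces $\ccc \ge 2$: if $\ccc = 1$ it reads $\ccI^{(\ccn)} \subseteq \clm\ccI^{\ccn}$, and since $\ccI^{\ccn} \subseteq \ccI^{(\ccn)}$ this gives $\ccI^{\ccn} = \clm\ccI^{\ccn}$, whence $\ccI^{\ccn} = 0$ by Nakayama, a contradiction. Thus $\ccc - (\ccc-1)/\ccn < \ccc$ strictly.

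Next I would exploit the punctured-spectrum hypothesis. The condition $\ccI^{(\ell)}_{\clp} = \ccI^{\ell}_{\clp}$ for every prime $\clp \neq \clm$ says exactly that no prime other than $\clm$ is an embedded prime of any $\ccI^{\ccs}$, that is, $\operatorname{Ass}(R/\ccI^{\ccs}) \subseteq \operatorname{Min}(\ccI) \cup \{\clm\}$; equivalently
\[
\ccI^{(\ccs)} = \ccI^{\ccs} :_R \clm^{\infty}
\]
for every $\ccs$, so that $\ccI^{(\ccs)}/\ccI^{\ccs}$ has finite length. The same localization argument yields the product--saturation identity $\ccI^{(k\ell)} = (\ccI^{(\ell)})^{k} :_R \clm^{\infty}$ for all $k,\ell$, since both sides are $\clm$-saturated and agree after localizing at every $\clp \neq \clm$. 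Writing $a := \ccc\ccn - \ccc + 1$ and $b := \ccn$, raising the hypothesis to the $k$th power gives
\[
(\ccI^{(a)})^{k} \subseteq \clm^{k}\ccI^{kb},
\]
while the identity above gives $\ccI^{(ka)} = (\ccI^{(a)})^{k} :_R \clm^{\infty}$.

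These reductions turn the problem into a single family of \emph{proportional} containments: it suffices to show $\ccI^{(ka)} \subseteq \ccI^{kb}$ for all $k \gg 0$. Indeed, a standard interpolation between consecutive powers (as in the work of Bocci and Harbourne relating resurgence to such uniform containments), using the baseline containment $\ccI^{(\ccc\ccs)} \subseteq \ccI^{\ccs}$ of Ein--Lazarsfeld--Smith and Hochster--Huneke to control the intermediate ratios, upgrades these to $\rho(\ccI) \le a/b = \ccc - (\ccc-1)/\ccn$. Note $a \ge b$ because $a - b = (\ccc-1)(\ccn-1) \ge 0$, so the nesting $\ccI^{(ka)} \subseteq \ccI^{(kb)}$ is available. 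Since $(\ccI^{(a)})^{k} \subseteq \clm^{k}\ccI^{kb} \subseteq \ccI^{kb}$ and $\ccI^{(ka)} = (\ccI^{(a)})^{k} :_R \clm^{\infty}$, the only obstruction to $\ccI^{(ka)} \subseteq \ccI^{kb}$ is the passage to the $\clm$-saturation: a homogeneous $f \in \ccI^{(ka)}$ lies in $\ccI^{kb}$ precisely when its class in the finite-length module $\ccI^{(kb)}/\ccI^{kb}$ vanishes.

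The crux, and the step I expect to be the main obstacle, is removing this saturation. The natural attack is by degrees: every nonzero homogeneous $f \in \ccI^{(ka)}$ has $\deg f \ge \alpha(\ccI^{(ka)}) \ge ka\,\widehat{\alpha}(\ccI)$, where $\alpha$ denotes the least degree of a nonzero element and $\widehat{\alpha}(\ccI) = \lim_{\ccs}\alpha(\ccI^{(\ccs)})/\ccs$ is the Waldschmidt constant, whereas $\ccI^{(kb)}/\ccI^{kb}$ is concentrated in degrees at most $\operatorname{reg}(\ccI^{kb}) - 1$, which is eventually linear in $k$ of some slope $\delta b$. Hence $\ccI^{(ka)} \subseteq \ccI^{kb}$ for $k \gg 0$ as soon as the strict slope inequality $a\,\widehat{\alpha}(\ccI) > \delta b$ holds. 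Establishing this inequality is exactly where the strengthened hypothesis is indispensable: it is the extra factor $\clm$ in $\ccI^{(a)} \subseteq \clm\ccI^{b}$ (equivalently, the surplus $\clm^{k}$ in $(\ccI^{(a)})^{k} \subseteq \clm^{k}\ccI^{kb}$) that should furnish the needed slack, and I would try to secure it by translating the containment into an $a$-invariant comparison for the Rees and symbolic Rees algebras of $\ccI$. The remaining verifications---the interpolation step above and the eventual linearity of the regularity---are routine, so essentially all of the difficulty is concentrated in this slope estimate.
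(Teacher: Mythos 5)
This theorem is not proved in the paper under review: it is quoted verbatim from Grifo--Huneke--Mukundan \cite{grifo-expected-2020} and used as a black box, so there is no in-paper argument to compare your sketch against. Judged on its own terms, your elementary reductions are all correct (the case $c=1$ is indeed vacuous by Nakayama; the punctured-spectrum hypothesis does give $I^{(s)}=I^{s}:\mathfrak{m}^{\infty}$ and $I^{(ka)}=(I^{(a)})^{k}:\mathfrak{m}^{\infty}$; powering the hypothesis gives $(I^{(a)})^{k}\subseteq\mathfrak{m}^{k}I^{kb}$), but the proof has two genuine gaps, only one of which you acknowledge.

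First, the claim that proportional containments $I^{(ka)}\subseteq I^{kb}$ for $k\gg 0$ can be ``upgraded by standard interpolation'' to $\rho(I)\le a/b$ is not correct as stated. Such containments bound only the asymptotic resurgence $\widehat{\rho}(I)=\sup\{m/s: I^{(mt)}\not\subseteq I^{st}\ \text{for all}\ t\gg 0\}$, and the inequality runs the wrong way: $\widehat{\rho}(I)\le\rho(I)$, with strict inequality possible. Passing from $\widehat{\rho}(I)<c$ to $\rho(I)<c$ is itself a nontrivial theorem (due to DiPasquale--Drabkin, and not available via Bocci--Harbourne's interpolation, which bounds $\rho$ by $\operatorname{reg}(I)/\widehat{\alpha}(I)$ under hypotheses you have not verified). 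Moreover, any direct attempt to interpolate to an intermediate ratio $m/s$ by writing $m\approx ka+cj$ and multiplying $I^{(ka)}\subseteq I^{kb}$ against $I^{(cj)}\subseteq I^{j}$ needs $I^{(u+v)}\subseteq I^{(u)}I^{(v)}$, which fails in general and holds only after saturating at $\mathfrak{m}$ --- so the saturation obstruction you isolate in the ``crux'' reappears here and is not confined to the proportional case.

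Second, the step you yourself identify as the main obstacle is left unproved, and the route you propose for it is unlikely to close. The slope inequality you need, $a\,\widehat{\alpha}(I)>\delta b$ with $\delta$ the eventual slope of $\operatorname{reg}(I^{t})$, is essentially the Bocci--Harbourne condition $\operatorname{reg}/\widehat{\alpha}<c$, which fails for many ideals satisfying the hypotheses of the theorem (including ideals of space monomial curves, the very case this paper needs); and the only degree information the strengthened hypothesis supplies is the surplus $\mathfrak{m}^{k}$, a gain of slope $1$ in initial degree, which has no reason to overtake the regularity of $I^{kb}$. The published argument removes the saturation by a different mechanism, trading that surplus $\mathfrak{m}^{k}$ against bounds that are uniform in $k$ (via integral closures and Brian\c{c}on--Skoda--type statements) rather than against a Waldschmidt--regularity comparison. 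As it stands, your proposal correctly assembles the formal reductions but leaves the theorem's actual content unestablished.
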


\comment{connect}
\label{sec:orga057243}
The purpose of this paper is to prove \eqref{equ:thm-ghm} for space monomial curves.\footnote{In \cite{grifo-expected-2020}, the stable Harbourne conjecture was claimed to be true for the ideal \(\clp\) defining a space monomial curve; in the proof, the following claim was assumed: \(\clp^{(3)} \subseteq \clm \clp^2\). However, as we will see in Example \ref{orgd5c7733}, there is a counterexample to this claim.}
Let \(\cck\) be a field.
For positive integers \(n_1\), \(n_2\), and \(n_3\),
consider the ring homomorphism from \(\cck[\ccx, \ccy, \ccz]\) to \(k[t]\) defined by
\(x \mapsto t^{n_1}\), \(y \mapsto t^{n_2}\), and \(z \mapsto t^{n_3}\), where \(\cck[\ccx, \ccy, \ccz]\) and \(k[t]\) are polynomial rings over \(\cck\).
The kernel of this map is called the \emph{ideal defining the space monomial curve} \(k[t^{n_1}, t^{n_2}, t^{n_3}]\),
and is denoted by \(\clp(n_1, n_2, n_3)\). Let \(\clp = \clp(\ccn_1, \ccn_2, \ccn_3)\).
If \(\clp\) is a complete intersection, then \(\clp^{(n)} = \clp^n\) for all \(\ccn\).
Throughout this paper, \(\clp\) is assumed to be not a complete intersection.
Since the (big) height of \(\clp\) equals 2, our goal is to prove that
\begin{equation}
\label{equ:ghm-condition}
 \clp^{(2 \ccn - 1)} \subseteq \clm \clp^{\ccn} \tforsome \ccn,
\end{equation}
where \(\clm\) is the maximal ideal \((\ccx, \ccy, \ccz)\).
Note that it follows from a theorem due to Huneke and Hochster~\cite{hochster-comparison-2002} that \(\clp^{(2 \ccn)} \subseteq \clp^{\ccn}\) for all \(\ccn\).
To prove \eqref{equ:ghm-condition}, we first characterize \(\clp\) satisfying \eqref{equ:ghm-condition} for \(\ccn = 2\) (Theorem~\ref{orgeb6e5b6}).
When \(\clp^{(3)} \not \subseteq \clm \clp^2\), we show \eqref{equ:ghm-condition}
by calculating some higher symbolic powers of \(\clp\) (Theorem~\ref{org319617b}).

To state our main results precisely, we introduce some notation.
Let \(\ccR = \cck[\ccx, \ccy, \ccz]\) and 
\[
M =\begin{bmatrix}
 x^{\cca_1} & y^{\ccb_1} & z^{\ccc_1} \\
 z^{\ccc_2} & x^{\cca_2} & y^{\ccb_2} \\
\end{bmatrix},
\]
where \(a_1\), \(a_2\), \(b_1\), \(b_2\), \(c_1\), and \(c_2\) are positive integers.
Let \(\clp_{\ccM}\) or \(\ccI_2(\ccM)\) denote the ideal in \(\ccR\) generated by the maximal minors of \(\ccM\);
we write \(\clp\) instead of \(\clp_{\ccM}\) when no confusion can arise.
Then \(\idealA = (\ccF, \ccG, \ccH)\), where
\begin{align*}
 \ccF &= y^{\ccb_1 + \ccb_2} - x^{\cca_2} z^{\ccc_1},\\
 \ccG &= z^{\ccc_1 + \ccc_2} - y^{\ccb_2} x^{\cca_1},\\
 \ccH &= x^{\cca_1 + \cca_2} - z^{\ccc_2} y^{\ccb_1}.
\end{align*}
Herzog \cite{herzog-generators-1970} showed that \(\clp(n_1, n_2, n_3) = \ccI_2(\ccM)\) for some \(2 \times 3\) matrix \(\ccM\).
Without loss of generality, we may assume that one of the following two conditions holds.
\begin{enumerate}
\item \(\cca_1 \le \cca_2\), \(\ccb_1 \ge \ccb_2\), and \(\ccc_1 \ge \ccc_2\).
\item \(\cca_1 > \cca_2\), \(\ccb_1 > \ccb_2\), and \(\ccc_1 > \ccc_2\).
\end{enumerate}
Suppose that \(\ccI_2(\ccM)\) is prime.
Then \(\ccM\) is said to be of \emph{type \(1\)} and \emph{type \(2\)} if it satisfies (1) and (2), respectively.

\begin{theorem}
 \comment{Thm.}
\label{sec:org7f7a0b2}
\label{orgeb6e5b6}
Let \(\ccM\) be a matrix of type \(1\) or type \(2\).
Then \(\idealA^{(3)} \not \subseteq \clm \idealA^2\) if and only if \(\alpha_3 = \gamma_3 =  b_2 - b_1 = 0\), 
where \(\alpha_3 = \max \set{0,  2 \cca_1 - \cca_2}\) and \(\gamma_3 = \max \set{0, 2 \ccc_2 - \ccc_1}\).
 
\end{theorem}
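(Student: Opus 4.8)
The plan is to reduce the containment to an explicit comparison of generators, exploiting that $\idealA$, its ordinary powers, and its symbolic powers are all homogeneous for the grading $\deg \ccx = \ccn_1$, $\deg \ccy = \ccn_2$, $\deg \ccz = \ccn_3$. Since $\idealA^3 = \idealA \cdot \idealA^2 \subseteq \clm \idealA^2$ automatically, the entire question concerns the ``excess'' $\idealA^{(3)}/\idealA^3$, so I would first pin down generators of $\idealA^{(2)}$ and $\idealA^{(3)}$. The key computational tool is that inverting $\ccx$ (resp.\ $\ccz$) turns $\idealA$ into a complete intersection: the Hilbert--Burch syzygy $\ccx^{\cca_1}\ccF + \ccy^{\ccb_1}\ccG + \ccz^{\ccc_1}\ccH = 0$ gives $\ccF \in (\ccG,\ccH)\ccR_{\ccx}$ and $\ccH \in (\ccF,\ccG)\ccR_{\ccz}$, so $\idealA \ccR_{\ccx} = (\ccG,\ccH)\ccR_{\ccx}$ is a prime complete intersection; consequently $\idealA^{(\ccn)} = (\ccG,\ccH)^{\ccn} : \ccx^{\infty} = (\ccF,\ccG)^{\ccn} : \ccz^{\infty}$. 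This turns each symbolic power into a saturation computable by finding combinations of the generators of $(\ccG,\ccH)^{\ccn}$ divisible by a power of $\ccx$; equivalently one may use the Zariski--Nagata description of $\idealA^{(\ccn)}$ by (Hasse) differential operators.

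Carrying this out for $\ccn = 2$, I expect $\idealA^{(2)} = \idealA^2 + (\Delta)$ with a single extra generator $\Delta$, characterized in type $1$ by $\ccx^{\cca_1}\Delta = \ccz^{\ccc_1-\ccc_2}\ccH^2 - \ccy^{\ccb_1-\ccb_2}\ccF\ccG$, obtained from the congruence $\ccz^{\ccc_1-\ccc_2}\ccH^2 \equiv \ccy^{\ccb_1-\ccb_2}\ccF\ccG \pmod{\ccx}$ (both sides reduce to $\ccy^{2\ccb_1}\ccz^{\ccc_1+\ccc_2}$ modulo $\ccx$). Rewriting $\Delta$ in terms of $\ccF,\ccG,\ccH$ shows that every coefficient is a non-constant monomial, whence $\idealA\,\Delta \subseteq \clm\idealA^2$; together with $\idealA^3 \subseteq \clm\idealA^2$ this means the second symbolic power never obstructs. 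Therefore $\idealA^{(3)} \not\subseteq \clm\idealA^2$ can only be caused by a generator of $\idealA^{(3)}$ lying outside $\idealA^3 + \idealA\,\Delta$, that is, by a genuinely new degree-$3$ generator of the symbolic Rees algebra.

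The heart of the proof is thus the computation of $\idealA^{(3)}$ and the precise form of this new generator $\Theta$. I would produce $\Theta$ by the same saturation mechanism one level up, combining the degree-$3$ products with the relation defining $\Delta$ and extracting the largest possible power of $\ccx$ (and symmetrically of $\ccz$). The arithmetic of this extraction is exactly what $\alpha_3$ and $\gamma_3$ record: the natural combination is divisible only by $\ccx^{\min \set{2\cca_1,\,\cca_2}}$, while isolating the ``true'' generator $\Theta_0$ requires dividing by $\ccx^{2\cca_1}$, leaving a residual factor $\ccx^{\max \set{0,\,2\cca_1-\cca_2}} = \ccx^{\alpha_3}$, and symmetrically $\ccz^{\gamma_3}$; the $\ccy$-balance of the leading terms leaves $\ccy^{\ccb_1-\ccb_2}$. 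I therefore expect $\Theta = \ccx^{\alpha_3}\ccz^{\gamma_3}\ccy^{\ccb_1-\ccb_2}\,\Theta_0$, and the dichotomy falls out: if $\alpha_3,\gamma_3,\ccb_1-\ccb_2$ are not all zero then $\Theta$ carries a non-constant monomial factor and lands in $\clm\idealA^2$, forcing $\idealA^{(3)} \subseteq \clm\idealA^2$; if all three vanish then $\Theta = \Theta_0$ is a minimal generator lying outside $\clm\idealA^2$, witnessing $\idealA^{(3)} \not\subseteq \clm\idealA^2$.

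The main obstacle is precisely this explicit determination of $\idealA^{(3)}$: proving that the saturation contributes exactly the one extra generator $\Theta$ in the predicted form, and—for the ``only if'' direction—verifying that when the three conditions fail \emph{every} generator of $\idealA^{(3)}$, not merely $\Theta$, already lies in $\clm\idealA^2$. Two further points need care but should be routine. In type $2$ one has $\ccb_1 > \ccb_2$, so $\ccb_1 - \ccb_2 > 0$ always and the containment holds automatically, consistent with the statement. Finally, because the construction of $\Theta$ uses only the saturation/syzygy structure and never divides by an integer, the characteristic of $\cck$ plays no role, so the equivalence holds in every characteristic—which is exactly what yields a counterexample to the characteristic-$\neq 3$ claim of Grifo, Huneke, and Mukundan.
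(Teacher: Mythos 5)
Your overall strategy---pin down explicit generators of $\idealA^{(2)}$ and $\idealA^{(3)}$, note that $\idealA^{(2)}\idealA \subseteq \clm\idealA^2$ so only the genuinely new degree-$3$ generators matter, and then test those generators against $\clm\idealA^2$---is exactly the paper's strategy (the paper imports the generators of $\idealA^{(3)}$ from Goto--Nishida--Shimoda and Kn\"odel--Schenzel--Zonsarow rather than recomputing them by saturation). But the proposal has genuine gaps at the two places where the theorem is actually decided. First, the structural claim that there is a single new generator of the form $\Theta = \ccx^{\alpha_3}\ccz^{\gamma_3}\ccy^{\ccb_1-\ccb_2}\Theta_0$ with $\Theta_0 \in \idealA^2$ is not what happens: in type $1$ there are two new generators $\ccD_3, \ccD_3'$ modulo $\idealA^{(2)}\idealA$ (three in type $2$, and a different single one in characteristic $2$), and $\ccD_3$ is \emph{not} divisible by $\ccy^{\ccb_1-\ccb_2}$ --- in its expansion
$\ccD_3 = \ccx^{\alpha}\ccy^{\ccb_1-\ccb_2}\ccz^{\gamma}\ccF^2 - \cdots - \ccx^{\cca_2+\alpha}\ccz^{\ccc_1-2\ccc_2+\gamma}\ccH^2 - \cdots$
the coefficient of $\ccH^2$ carries no power of $\ccy$. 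The correct dichotomy is not ``$\Theta$ has a monomial prefactor or not'' but ``the coefficient of $\ccF^2$ alone, namely $\ccx^{\alpha_3}\ccy^{\ccb_1-\ccb_2}\ccz^{\gamma_3}$, is a unit or not,'' the remaining coefficients being in $\clm$ unconditionally.

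Second, and more seriously, the ``if'' direction has no argument: when $\alpha_3 = \gamma_3 = \ccb_2 - \ccb_1 = 0$ you assert that the new generator lies outside $\clm\idealA^2$ but give no reason. Knowing that $\ccD_3 = \ccF^2 + (\text{element of }\clm\idealA^2)$ does not by itself preclude $\ccD_3 \in \clm\idealA^2$; one must rule out $\ccF^2 \in \clm\idealA^2$. The paper does this with a concrete term-level observation that your sketch is missing: among the six generators $\ccF^2, \ccG^2, \ccH^2, \ccF\ccG, \ccF\ccH, \ccG\ccH$ of $\idealA^2$, only $\ccF^2$ contains a pure power of $\ccy$ as a term, so the monomial $\ccy^{2\ccb_1+2\ccb_2}$ cannot be produced by any element of $\clm\idealA^2$. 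Finally, your remark that characteristic plays no role is too quick: the standard generating sets for $\idealA^{(3)}$ in type $2$ differ in characteristic $2$, and the paper must treat that case separately (the conclusion is characteristic-free, but the proof is not uniform). Since you yourself flag the determination of $\idealA^{(3)}$ as the main obstacle, and both the generator count and the shape of the generators in your sketch are incorrect, the proposal as written does not close.
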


\begin{example}
 \comment{Exm.}
\label{sec:org8b402b3}
\label{orgd5c7733}
Let \(\clp = \clp(5, 11, 4)\).
We see that \(\clp = \ccI_2(\ccM)\), where
\[
\ccM =\begin{bmatrix}
 x & y & z^3 \\
 z & x^2 & y \\
\end{bmatrix}.
\]
Since \(\alpha_3 = \gamma_3 = b_2 - b_1 = 0\),
it follows from Theorem~\ref{orgeb6e5b6} that \(\clp^{(3)} \not \subseteq \clm \clp^2\).
Indeed, let \(\ccD_2 = \ccz^{-1} (\ccH \ccF - \ccx \ccG^2)\) and \(\ccD_3 = \ccz^{-1} (\ccH \ccD_2 - \ccG^3)\).
Then
\begin{align*}
 \ccD_2 &= \ccz^{-1} \bigl((\ccx^3 - \ccy \ccz) (\ccy^2 - \ccx^2 \ccz^3) - \ccx (\ccz^4 - \ccx \ccy)^2 \bigr) \\
 & = \ccz^{-1} \bigl(- \ccx^5 \ccz^3 - \ccy^3 \ccz + \ccx^2 \ccy \ccz^4 - \ccx \ccz^8 + 2 \ccx^2 \ccy \ccz^4 \bigr) \\
 &= - \ccy^3 - \ccx^5 \ccz^2 + 3 \ccx^2 \ccy \ccz^3 - \ccx \ccz^7, \\
 \ccD_3 &= \ccz^{-1} \bigl((\ccx^3 - \ccy \ccz)(- \ccy^3 - \ccx^5 \ccz^2 + 3 \ccx^2 \ccy \ccz^3 - \ccx \ccz^7) - (\ccz^4 - \ccx \ccy)^3 \bigr) \\
 &=\ccz^{-1} \bigl (- \ccx^8 \ccz^2  + 4 \ccx^5 \ccy \ccz^3 - \ccx^4 \ccz^7 + \ccy^4 \ccz - 6 \ccx^2 \ccy^2 \ccz^4 + 4 \ccx \ccy \ccz^8 -  \ccz^{12} \bigr) \\
 &=  \ccy^4 - \ccx^8 \ccz + 4 \ccx^5 \ccy \ccz^2 - 6 \ccx^2 \ccy^2 \ccz^3 - \ccx^4 \ccz^6  + 4 \ccx \ccy \ccz^7 -  \ccz^{11}.
\end{align*}
Therefore \(\ccD_2 \in \clp^{(2)}\) and \(\ccD_3 \in \clp^{(3)}\).
Furthermore,
\begin{align*}
 \ccD_3 &= \ccy^4 - \ccx^8 \ccz + 4 \ccx^5 \ccy \ccz^2 - 6 \ccx^2 \ccy^2 \ccz^3 - \ccx^4 \ccz^6  + 4 \ccx \ccy \ccz^7 -  \ccz^{11} \\
 &= (\ccy^2 - \ccx^2 \ccz^3)^2 - \ccz^3 (\ccz^4 - \ccx \ccy)^2 - \ccx^2 \ccz (\ccx^3 - \ccy \ccz)^2  - 2 \ccx \ccz^2 (\ccz^4 - \ccx \ccy) (\ccx^3 - \ccy \ccz)\\
 &= \ccF^2 - \ccz^3 \ccG^2 - \ccx^2 \ccz \ccH^2 - 2 \ccx \ccz^2 \ccG \ccH.
\end{align*}
Hence \(\ccD_3 \not \in \clm \clp^2\), so \(\clp^{(3)} \not \subseteq \clm \clp^2\).
This is a counterexample to a claim due to Grifo, Huneke, and Mukundan~\cite[Theorem 4.2]{grifo-expected-2020},
which states that \(\clp^{(3)} \subseteq \clm \clp^2\) when the characteristic of \(\cck\) is not \(3\).
We can generalize this example as follows. Let \(\clp = \clp(\ccs + 1, \ccs^2 - \ccs - 1, \ccs)\). Then \(\clp = \ccI_2(\ccM)\), where
\[
\ccM =\begin{bmatrix}
 x & y & z^{\ccs  - 1} \\
 z & x^{\ccs  - 2} & y \\
\end{bmatrix}.
\]
Since \(\alpha_3 = \max \set{0, 2 - (\ccs - 2)}\) and \(\gamma_3 = \max \set{0, 2 - (\ccs - 1)}\),
it follows from Theorem~\ref{orgeb6e5b6} that \(\clp^{(3)} \not \subseteq \clm \clp^2\) for \(\ccs \ge 4\).
 
\end{example}

\comment{connect}
\label{sec:orgd2900d2}
By Theorem~\ref{orgeb6e5b6}, we need only show \eqref{equ:ghm-condition} for \(\ccM\) with \(\alpha_3 = \beta_3 = \ccb_1 - \ccb_2 = 0\).
Note that such \(\ccM\) is of type \(1\).

Let \(\ccM\) be a matrix of type \(1\). For a positive integer \(\ccn\), let
\begin{equation}
\label{org643446c}
 \ccD_{\ccn} = \ccx^{\alpha_{\ccn}} \ccz^{\gamma_{\ccn} - \ccc_2} (\ccH  \ccD_{\ccn - 1} - \ccx^{\cca_2 - (\ccn - 1) \cca_1} \ccy^{(\ccn - 1)(\ccb_1 - \ccb_2)} \ccG^{\ccn}),
\end{equation}
where \(\ccD_0 = - \ccy^{\ccb_2}\) and
\[
 \alpha_{\ccn} = \max \set{0, (\ccn - 1) \cca_1 - \cca_2},\ \gamma_{\ccn} = \max \set{0, (\ccn - 1) \ccc_2 - \ccc_1}.
\]
Since \(\ccz^{\ccc_2} \ccF = - \ccy^{\ccb_2} \ccH - \ccx^{\cca_2} \ccG\), we see that \(\ccD_1 = \ccF\).
Suppose that \(\ccM\) is a matrix of type \(1\) satisfying \(\ccb_1 = \ccb_2\).
Without loss of generality, we may assume that
\begin{equation}
\label{equ:type-1-p}
  \frac{\cca_2}{\cca_1} \le  \frac{\ccc_1}{\ccc_2}.
\end{equation}
If a matrix \(\ccM\) of type \(1\) satisfies \(\ccb_1 = \ccb_2\) and \eqref{equ:type-1-p},
then it is said to be of \emph{type \(1'\)}. 
Define
\[
 \mres[\ccM] = \left \lfloor \frac{\cca_2 }{\cca_1 } \right \rfloor + 1,
\]
where \(\lfloor \cca_2 / \cca_1 \rfloor\) is the greatest integer less than or equal to \(\cca_2 / \cca_1\).
Note that 
\begin{equation}
\label{equ:inequality-r}
 (\mres[\ccM] - 1) \cca_1 \le \cca_2 < \mres[\ccM] \cca_1.
\end{equation}

\begin{theorem}
 \comment{Thm.}
\label{sec:org30662f7}
\label{org319617b}
If \(\ccM\) is a matrix of type \(1'\) and \(\mres = \mres[\ccM]\), then

\begin{enumerate}
\item \(\idealA^{(\ccl)} = \idealA^{\ccl} + (\ccD_{\ccl})\) for \(0 \le \ccl \le \mres\);
\item \(\idealA^{(\mres + 1)} = \idealA^{(\mres)} \idealA + (\ccD_{\mres + 1})\);
\item \(\idealA^{(\mres + 2)} = \idealA^{(\mres + 1)} \idealA + \idealA^{(\mres)} \idealA^{(2)}\).
\end{enumerate}
 
\end{theorem}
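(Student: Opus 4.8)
The plan is to exploit the fact that inverting $\ccz$ turns $\clp$ into a complete intersection. Since $\ccz^{\ccc_2}\ccF = -\ccy^{\ccb_2}\ccH - \ccx^{\cca_2}\ccG$, we have $\ccF \in (\ccG,\ccH)\ccR[\ccz^{-1}]$, so $\clp\ccR[\ccz^{-1}] = (\ccG,\ccH)\ccR[\ccz^{-1}]$ is generated by two elements and has height $2$; hence it is a complete intersection prime in the regular ring $\ccR[\ccz^{-1}]$, all of whose powers are unmixed, so that $\clp^{(\ccl)}\ccR[\ccz^{-1}] = \clp^{\ccl}\ccR[\ccz^{-1}]$ (symbolic powers localize). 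Because $\ccz \notin \clp$, the $\clp$-primary ideal $\clp^{(\ccl)}$ is its own contraction from $\ccR[\ccz^{-1}]$, whence
\[
 \clp^{(\ccl)} = \clp^{\ccl}\ccR[\ccz^{-1}] \cap \ccR = \bigl(\clp^{\ccl} :_{\ccR} \ccz^{\infty}\bigr),
\]
and likewise for $\mres+1$ and $\mres+2$ in place of $\ccl$. This reduces every assertion of the theorem to computing a $\ccz$-saturation of an ordinary power.

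Next I would record the easy inclusion. By induction on $\ccl$, using the recursion \eqref{org643446c} with $\ccD_1 = \ccF$, each $\ccD_{\ccl}$ is a genuine element of $\ccR$ and lies in $\clp^{(\ccl)}$: granting $\ccD_{\ccl-1} \in \clp^{(\ccl-1)}$ one has $\ccH\ccD_{\ccl-1} \in \clp\clp^{(\ccl-1)} \subseteq \clp^{(\ccl)}$ and $\ccG^{\ccl} \in \clp^{\ccl} \subseteq \clp^{(\ccl)}$, so
\[
 \ccz^{\ccc_2-\gamma_{\ccl}}\ccD_{\ccl} = \ccx^{\alpha_{\ccl}}\bigl(\ccH\ccD_{\ccl-1} - \ccx^{\cca_2-(\ccl-1)\cca_1}\ccG^{\ccl}\bigr) \in \clp^{(\ccl)},
\]
and since $\ccz \notin \clp$ this forces $\ccD_{\ccl} \in \clp^{(\ccl)}$ once $\ccD_{\ccl} \in \ccR$ is known (the required divisibility by $\ccz^{\ccc_2-\gamma_{\ccl}}$ is a direct check on the bracketed expression modulo powers of $\ccz$). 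With $\ccD_{\ccl} \in \clp^{(\ccl)}$ and the evident containments $\clp^{(\mres)}\clp,\ \clp^{(\mres+1)}\clp,\ \clp^{(\mres)}\clp^{(2)} \subseteq \clp^{(\mres+2)}$, each right-hand side in (1)--(3) is sandwiched between the corresponding ordinary and symbolic powers.

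The crux is the reverse inclusion. By the reduction above it suffices to prove that each right-hand side $J$ is $\ccz$-saturated, i.e.\ $(J :_{\ccR} \ccz) = J$; for then, since $J$ contains the corresponding ordinary power $\clp^{\ccl}$, one gets $\clp^{(\ccl)} = (\clp^{\ccl} :_{\ccR}\ccz^{\infty}) \subseteq (J :_{\ccR}\ccz^{\infty}) = J$, and the sandwich closes. To verify $\ccz$-saturation I would pass to $\ccR/(\ccz) = \cck[\ccx,\ccy]$, where the generators become the explicit forms $\overline{\ccF} = \ccy^{2\ccb_1}$, $\overline{\ccG} = -\ccx^{\cca_1}\ccy^{\ccb_1}$, $\overline{\ccH} = \ccx^{\cca_1+\cca_2}$, and a short induction on \eqref{org643446c} shows that $\overline{\ccD_{\ccl}}$ has leading term $(-1)^{\ccl-1}\ccy^{(\ccl+1)\ccb_1}$, with only pure powers of $\ccx$ as further terms (and these occur precisely when $\ccc_1 = \ccc_2$). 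Thus the reduction of each $J$ is an explicit ideal of $\cck[\ccx,\ccy]$ — monomial when $\ccc_1 > \ccc_2$ — and the condition $(J:\ccz) = J$ becomes a finite combinatorial statement proved by induction on $\ccl$; here the type-$1'$ inequality \eqref{equ:type-1-p} is exactly what guarantees $\alpha_{\ccl} = \gamma_{\ccl} = 0$ for $\ccl \le \mres$, so that in this range \eqref{org643446c} is a clean division by $\ccz^{\ccc_2}$.

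I expect this reverse step to be the main obstacle. First, the exponents $\alpha_{\ccl},\gamma_{\ccl}$ switch on at $\ccl = \mres+1$, so the three regimes $\ccl \le \mres$, $\ccl = \mres+1$, $\ccl = \mres+2$ must be treated separately, and tracking which power of $\ccz$ divides the bracket in \eqref{org643446c} is delicate. Second, statement~(3) asserts that \emph{no} new generator is needed in symbolic degree $\mres+2$ — that the saturation is already generated by the products $\clp^{(\mres+1)}\clp$ and $\clp^{(\mres)}\clp^{(2)}$. This is a stabilization phenomenon and is the most subtle point: concretely it amounts to showing that the element $\ccD_{\mres+2}$ produced by the recursion already lies in $\clp^{(\mres+1)}\clp + \clp^{(\mres)}\clp^{(2)}$, equivalently that the mod-$\ccz$ reduction of this product ideal is itself $\ccz$-saturated to the required order.
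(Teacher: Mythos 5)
Your reduction of the problem is sound as far as it goes: since $\ccz \notin \idealA$ and $\idealA\ccR[\ccz^{-1}] = (\ccG,\ccH)\ccR[\ccz^{-1}]$ is a complete intersection, indeed $\idealA^{(\ccl)} = (\idealA^{\ccl} :_{\ccR} \ccz^{\infty})$, and your inductive verification that each $\ccD_{\ccl}$ lies in $\ccR$ and in $\idealA^{(\ccl)}$ is essentially the paper's Lemma~\ref{org2a0af15} (the ``direct check'' of divisibility of the bracket by $\ccz^{\ccc_2 - \gamma_{\ccl}}$ is a genuine induction, carried out there). The gap is in the reverse inclusion, exactly where you anticipated trouble. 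Knowing the image of $\idealB$ in $\ccR/(\ccz)$ does \emph{not} determine whether $(\idealB : \ccz) = \idealB$: for instance $(\ccx^2, \ccx\ccz)$ and $(\ccx^2)$ have the same image in $\cck[\ccx,\ccy]$, but only the second is $\ccz$-saturated. So the step ``pass to $\ccR/(\ccz)$ and check a finite combinatorial statement'' cannot certify saturation. Worse, $(\idealB:\ccz)=\idealB$ says $\ccz$ is a nonzerodivisor on $\ccR/\idealB$, i.e.\ that $\idealB$ is unmixed of height $2$ --- which is essentially the assertion of the theorem itself, so without an independent input the argument is circular.

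The independent input the paper uses is quantitative: Schenzel's criterion (Lemma~\ref{org9fefb4d}) says that if $\idealB \subseteq \idealA^{(\ccn)}$ and the colength of $\idealB + (\ccx)$ (or of $\idealB + (\ccz)$) equals $\binom{\ccn+1}{2}$ times the colength of $\idealA + (\ccx)$ (resp.\ $\idealA + (\ccz)$), then $\idealB = \idealA^{(\ccn)}$. Reducing mod $\ccx$ or mod $\ccz$ turns each candidate $\idealB$ into an explicit monomial-type ideal of $\cck[\ccx,\ccy]$ or $\cck[\ccy,\ccz]$, whose colength is computed by the closed formula of Lemma~\ref{orgcc5bc69}; matching the two numbers is what replaces your saturation check. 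Note also that the paper must split into the cases $\cca_2 > (\mres-1)\cca_1$ and $\cca_2 = (\mres-1)\cca_1$, working mod $\ccx$ in the first and mod $\ccz$ in the second, because the reductions of $\ccD_{\mres}$ and $\ccD_{\mres+1}$ are only of the required monomial form in one of the two quotients; your plan, which commits to $\ccz$ throughout, would additionally founder on this point. To repair your approach you would need to supply some genuine substitute for the length count --- e.g.\ a Gr\"obner basis of $\idealB$ exhibiting $\ccz$ as a nonzerodivisor, or the multiplicity comparison above --- rather than the reduction mod $\ccz$ alone.
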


\begin{theorem}
 \comment{Thm.}
\label{sec:orgb504ad5}
\label{org204a6ab}
If \(\ccM\) is a matrix of type \(1'\) and \(\ccn = \lfloor (\mres[\ccM] + 1)/2 \rfloor + 1\), then \(\idealA^{(2 \ccl - 1)} \not \subseteq \clm \idealA^{\ccl}\)
for \(1 \le \ccl \le \ccn - 1\) and
\begin{equation}
\label{equ:thm:stable-harbourne}
 \idealA^{(2 \ccn - 1)} \subseteq \clm \idealA^{\ccn}.
\end{equation}
 
\end{theorem}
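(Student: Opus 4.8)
The plan is to read off both assertions from the structural description of the symbolic powers in Theorem~\ref{org319617b}, reducing everything to the position of the elements $\ccD_n$ relative to the filtrations $\{\clp^{k}\}$ and $\{\clm\clp^{k}\}$. Writing $\mres = \mres[\ccM]$, the facts about the $\ccD_n$ that I would isolate are: (L1) $\ccD_n \in \clp^{\lceil n/2\rceil}$ for $0 \le n \le \mres$; (L2) $\ccD_{2l} \in \clm\clp^{l}$; (L3) for $2l-1 \le \mres$ one has $\ccD_{2l-1} - u\,\ccF^{l} \in \clm\clp^{l}$ with $u \in \cck$ nonzero, and $\ccF^{l}$ is a minimal generator of $\clp^{l}$, so that $\ccF^{l} \notin \clm\clp^{l}$ and hence $\ccD_{2l-1} \notin \clm\clp^{l}$; and (L4) $\ccD_{\mres+1} \in \clm\clp^{\lceil(\mres+1)/2\rceil}$. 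The minimal-generator claim in (L3) holds because the Hilbert--Burch relations among $\ccF, \ccG, \ccH$ have all entries in $\clm$, so the degree-$l$ monomials in $\ccF, \ccG, \ccH$ minimally generate $\clp^{l}$. Granting (L1)--(L4), the theorem follows by elementary ideal bookkeeping.

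For the non-containment, if $1 \le l \le n - 1 = \lfloor(\mres+1)/2\rfloor$, then $2l - 1 \le \mres$, so Theorem~\ref{org319617b}(1) gives $\ccD_{2l-1} \in \clp^{2l-1} + (\ccD_{2l-1}) = \clp^{(2l-1)}$; since $\ccD_{2l-1} \notin \clm\clp^{l}$ by (L3), we conclude $\clp^{(2l-1)} \not\subseteq \clm\clp^{l}$, which is exactly the asserted family of non-containments for $1 \le l \le n-1$.

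For \eqref{equ:thm:stable-harbourne} I would distinguish the parity of $\mres$; throughout, $n \ge 2$, so $\clp^{2n-1} = \clp^{n-1}\clp^{n} \subseteq \clm\clp^{n}$. If $\mres$ is even, then $2n-1 = \mres+1$ and, combining Theorem~\ref{org319617b}(2) with (1), $\clp^{(\mres+1)} = \clp^{\mres+1} + \ccD_{\mres}\clp + (\ccD_{\mres+1})$; here $\clp^{\mres+1}\subseteq\clm\clp^{n}$ as above, $\ccD_{\mres}\clp \subseteq \clm\clp^{n-1}\clp = \clm\clp^{n}$ by (L2) (with $l = n-1$), and $\ccD_{\mres+1}\in\clm\clp^{n}$ by (L4), since $\lceil(\mres+1)/2\rceil = n$. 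If $\mres$ is odd, then $2n-1 = \mres+2$ and expanding Theorem~\ref{org319617b}(3) through (2) and (1) yields
\[
 \clp^{(\mres+2)} = \clp^{\mres+2} + \ccD_{\mres}\clp^{2} + \ccD_{\mres+1}\clp + \ccD_{2}\clp^{\mres} + (\ccD_{\mres}\ccD_{2}).
\]
Here $\clp^{\mres+2} = \clp^{2n-1}\subseteq\clm\clp^{n}$; $\ccD_{\mres}\clp^{2}\subseteq\clp^{n+1}\subseteq\clm\clp^{n}$ by (L1); $\ccD_{\mres+1}\clp\subseteq\clm\clp^{n}$ by (L2), as $\mres+1 = 2(n-1)$ is even; and $\ccD_{2}\clp^{\mres}\subseteq\clp^{\mres+1}\subseteq\clm\clp^{n}$ by (L1). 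The only non-formal contribution is the cross term $\ccD_{\mres}\ccD_{2}$, which by (L1) for $\ccD_{\mres}$ and the case $l = 1$ of (L2), namely $\ccD_2 \in \clm\clp$, satisfies $\ccD_{\mres}\ccD_{2} \in \clp^{\lceil\mres/2\rceil}\cdot\clm\clp = \clm\clp^{n}$ because $\lceil\mres/2\rceil + 1 = n$. This last point is where the corrected computation of $\ccD_2$ (it does lie in $\clm\clp$, contrary to what a naive degree count suggests) is essential.

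The substance of the argument, and the step I expect to be the main obstacle, is proving (L1)--(L3) from the recursion~\eqref{org643446c}. For a type $1'$ matrix, \eqref{equ:type-1-p} together with $(\mres-1)\cca_1 \le \cca_2 < \mres\cca_1$ from \eqref{equ:inequality-r} forces $\alpha_n = \gamma_n = 0$ for all $n \le \mres$, so the recursion collapses to the clean form
\[
 \ccz^{\ccc_2}\ccD_n = \ccH\ccD_{n-1} - \ccx^{\cca_2 - (n-1)\cca_1}\ccG^{n} \qquad (2 \le n \le \mres),
\]
with seed relation $\ccz^{\ccc_2}\ccF = -\ccy^{\ccb_2}\ccH - \ccx^{\cca_2}\ccG$. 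The delicate point is the division by $\ccz^{\ccc_2}$: since $\ccz \notin \clp$, this identity only shows a priori that $\ccD_n$ lies in a \emph{symbolic} power, and reduction modulo $\clm\clp^{\bullet}$ does not commute with the division, so one cannot argue by passing to residues. I would instead run a single induction that carries an \emph{explicit} representative of $\ccD_n$ as a $\cck$-linear combination of terms $(\text{monomial})\cdot\ccF^{i}\ccG^{j}\ccH^{k}$ with $i+j+k = \lceil n/2\rceil$, using the seed relation to expose the factor $\ccz^{\ccc_2}$ and to perform the division explicitly; both the ordinary-power membership (L1) and the residue modulo $\clm$ --- vanishing for even $n$, a nonzero multiple of $\ccF^{\lceil n/2\rceil}$ for odd $n$ --- can then be read off at each step. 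This parity alternation is precisely what the division by $\ccz^{\ccc_2}$ produces, and tracking it is the crux of the whole theorem; finally the boundary index $\mres+1$, where $\alpha_{\mres+1} = \mres\cca_1 - \cca_2 > 0$ switches on, requires a separate run of the same computation to yield (L4).
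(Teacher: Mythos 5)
Your reduction of the theorem to facts (L1)--(L4) about the elements $\ccD_{\ccl}$, and the ideal-theoretic bookkeeping that follows, track the paper's argument closely and are correct: the paper proves exactly your (L1), (L2), and (L4) as Lemma \ref{org9a2d553} (namely $\ccD_{\ccl} \in \clm^{\delta_{\ccl}}\idealA^{\lfloor(\ccl+1)/2\rfloor}$ with $\delta_{\ccl} = 1$ iff $\ccl$ is even or $\ccl = \mres[\ccM]+1$), and then runs the same parity split on $\mres[\ccM]$ that you describe. But the proposal has a genuine gap precisely where you flag ``the main obstacle'': (L1), (L2), and (L4) are never proved, only a plan is given, and the plan is not shown to close. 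The difficulty is the one you yourself identify --- the recursion only gives $\ccz^{\ccc_2}\ccD_{\ccl} = \ccH\ccD_{\ccl-1} - \ccx^{\cca_2-(\ccl-1)\cca_1}\ccG^{\ccl}$, and division by $\ccz^{\ccc_2}$ does not commute with membership in $\clm^{\delta}\idealA^{\ccn}$ --- but ``carrying an explicit representative as a combination of monomials of degree $\lceil \ccl/2\rceil$ in $\ccF,\ccG,\ccH$'' does not obviously resolve it: $\ccH\ccD_{\ccl-1}$ raises the ordinary-power degree by one at every step while the target degree $\lceil\ccl/2\rceil$ grows by one only every two steps, and the term $\ccG^{\ccl}$ has degree $\ccl$, so no single rewriting exposes a factor $\ccz^{\ccc_2}$ in a representative of the right degree. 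The paper needs two nontrivial devices here: the alternative expression $\ccx^{-\cca_1}(\ccG\ccD_{\ccl-1}-\ccz^{\ccc_1-(\ccl-1)\ccc_2}\ccH^{\ccl})$ for the same element (Lemma \ref{org3d95cf3}), combined with the identity $\ccx^{\cca_2}\ccG - \myy\ccH = (\ccx^{\cca_2}\ccz^{\ccc_1}+\myy^2)\ccz^{\ccc_2} - 2\myy\ccx^{\cca_1+\cca_2}$, and then a telescoping chain of auxiliary elements $\ccC_0,\dotsc,\ccC_{\ccn}$ that reduces everything to an explicit congruence modulo $(\ccz^{\ccc_2-\gamma})$. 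Without something of this sort the induction in your sketch does not go through.

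A secondary point: your (L3) is stronger than what you need and is also unproven. For the non-containment the paper does not identify $\ccD_{2\ccl-1}$ modulo $\clm\idealA^{\ccl}$ with a multiple of $\ccF^{\ccl}$; it only uses Lemma \ref{org2a0af15}, which gives $\ccD_{2\ccl-1}\equiv(-\myy)^{2\ccl}$ modulo $\ccx$ or modulo $\ccz$, and observes that no element of $\clm\idealA^{\ccl}$ reduces to that pure power of $\ccy$. Your justification that $\ccF^{\ccl}$ is a minimal generator of $\idealA^{\ccl}$ ``because the Hilbert--Burch relations have entries in $\clm$'' is not airtight for $\ccl\ge 2$, since one would need to control all relations among the degree-$\ccl$ products (i.e.\ the Rees algebra), not just the linear syzygies; the fact $\ccF^{\ccl}\notin\clm\idealA^{\ccl}$ is true but is better obtained by the same reduction modulo $(\ccx,\ccz)$. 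These points are fixable, but as written the two load-bearing steps of the proof are asserted rather than established.
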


\begin{example}
 \comment{Exm.}
\label{sec:org2fe3ae6}
\label{org8c11ad5}
Let \(\idealA = \clp(5, 11, 4)\).
Then \(\mres[\ccM] = \lfloor 2/1 \rfloor + 1 = 3\) and \(\ccn = \lfloor (\mres[\ccM] + 1)/2 \rfloor + 1 = 3\).
As we have seen in Example \ref{orgd5c7733},
\(\idealA^{(3)} = \idealA^{(2 \cdot 2 - 1)} \not \subseteq \clm \idealA^2\).
However, by Theorem \ref{org204a6ab}, \(\idealA^{(5)} = \idealA^{(2 \cdot 3 - 1)} \subseteq \clm \idealA^3\).
 
\end{example}

\comment{connect}
\label{sec:org12e8e0e}
From Theorems \ref{org34085be}, \ref{orgeb6e5b6}, and \ref{org204a6ab}, we obtain the following corollary.

\begin{corollary}
 \comment{Cor.}
\label{sec:orgbb5cdb3}
The stable Harbourne conjecture holds for ideals defining space monomial curves.
 
\end{corollary}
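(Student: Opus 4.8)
The plan is to verify the hypotheses of Theorem~\ref{org34085be} and then invoke it. First I would dispose of the complete intersection case: if $\clp$ is a complete intersection, then $\clp^{(\ccn)} = \clp^{\ccn}$ for all $\ccn$ and the conjecture holds trivially, so I may assume $\clp$ is not a complete intersection. Then the big height of $\clp$ is $\ccc = 2$, and by Herzog's theorem $\clp = \ccI_2(\ccM)$ for a matrix $\ccM$ that, after permuting the variables, is of type $1$ or type $2$. Being the defining ideal of a monomial curve, $\clp$ is prime (hence radical) and quasi-homogeneous for the grading assigning $\ccx$, $\ccy$, $\ccz$ the weights $\ccn_1$, $\ccn_2$, $\ccn_3$; thus the structural hypotheses of Theorem~\ref{org34085be} hold, and since $\ccc = 2$ the inclusion~\eqref{equ:thm-ghm} demanded there is precisely~\eqref{equ:ghm-condition}.

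Next I would check the localization hypothesis $\clp^{(\ccl)}_{\mathfrak{q}} = \clp^{\ccl}_{\mathfrak{q}}$ for every prime $\mathfrak{q} \neq \clm$ and every $\ccl$. When $\mathfrak{q} \not\supseteq \clp$ both sides are the unit ideal, and when $\mathfrak{q} = \clp$ equality holds by the definition of the symbolic power; the only remaining primes satisfy $\mathfrak{q} \supsetneq \clp$ with $\mathfrak{q} \neq \clm$. This is the delicate point of the argument: one must confirm that, apart from $\clp$ itself, the only associated prime of every $\clp^{\ccl}$ is $\clm$, equivalently that a space monomial curve is singular at most at the origin. Granting this, $\clp R_{\mathfrak{q}}$ is generated by a regular sequence for every such $\mathfrak{q}$, so $\clp$ is locally a complete intersection there and $\clp^{(\ccl)}_{\mathfrak{q}} = \clp^{\ccl}_{\mathfrak{q}}$.

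It then remains to produce some $\ccn$ satisfying~\eqref{equ:ghm-condition}, which I would split according to Theorem~\ref{orgeb6e5b6}. If $\alpha_3$, $\gamma_3$, and $\ccb_2 - \ccb_1$ are not all zero, then $\clp^{(3)} \subseteq \clm \clp^2$, which is~\eqref{equ:ghm-condition} for $\ccn = 2$. Otherwise $\alpha_3 = \gamma_3 = \ccb_2 - \ccb_1 = 0$; such an $\ccM$ is of type $1$ with $\ccb_1 = \ccb_2$, and after interchanging the roles of $\cca_1, \cca_2$ and $\ccc_1, \ccc_2$ so that~\eqref{equ:type-1-p} holds it is of type $1'$. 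For such $\ccM$, Theorem~\ref{org204a6ab} furnishes the explicit value $\ccn = \lfloor (\mres[\ccM] + 1)/2 \rfloor + 1$ with $\clp^{(2\ccn - 1)} \subseteq \clm \clp^{\ccn}$, again establishing~\eqref{equ:ghm-condition}. In both cases~\eqref{equ:ghm-condition} holds, so Theorem~\ref{org34085be} yields the stable Harbourne conjecture for $\clp$. The substantive content lies in Theorems~\ref{orgeb6e5b6} and~\ref{org204a6ab}, which I assume; the corollary itself is an assembly of these two cases together with the localization check above.
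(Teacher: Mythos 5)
Your proposal is correct and follows essentially the same route as the paper, which derives the corollary directly from Theorems~\ref{org34085be}, \ref{orgeb6e5b6}, and \ref{org204a6ab}: use Theorem~\ref{orgeb6e5b6} to get \eqref{equ:ghm-condition} with $\ccn = 2$ unless $\alpha_3 = \gamma_3 = \ccb_2 - \ccb_1 = 0$, and in that remaining (type $1'$) case invoke Theorem~\ref{org204a6ab}. Your explicit verification of the localization hypothesis of Theorem~\ref{org34085be} (smoothness of the monomial curve away from the origin, so $\clp$ is locally a complete intersection at every prime other than $\clp$ and $\clm$) is left implicit in the paper but is exactly the right point to check.
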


\comment{connect}
\label{sec:orgacd3ec7}
This paper is organized as follows.
In Section \ref{org6477ab4}, we recall some known results about the second and third symbolic powers of \(\idealA\), and
prove Theorem \ref{orgeb6e5b6}.
In Section \ref{org75e5b10}, we calculate up to the \(\mres[\ccM] + 2\)nd symbolic power of \(\idealA\) when \(\ccM\) is of type \(1'\), and show Theorem \ref{org319617b}.
Finally, we prove Theorem \ref{org204a6ab} in Section~\ref{org87a2868}. 

\section{The second and third symbolic powers}
\label{sec:org5dd7544}
\label{org6477ab4}
In this section, we prove Theorem~\ref{orgeb6e5b6}. 

\subsection{The second symbolic power}
\label{sec:org509d62e}
\begin{lemma}[\hspace{1sp}\cite{schenzel-examples-1988}]
 \comment{Lem. [\hspace{1sp}\cite{schenzel-examples-1988}]}
\label{sec:org28ba317}
\label{orgc0004c6}
If \(\ccM\) is a matrix of type \(1\) or \(2\),
then \(\idealA^{(2)} = \idealA^2 + (\ccD_2)\), where \(\ccD_2 = \ccx^{\alpha_2} \ccz^{- \ccc_2} (\ccH \ccF - x^{\cca_2 - \cca_1} y^{\ccb1 - \ccb_2} \ccG^2)\)
and \(\alpha_2 = \max \set{0, \cca_1 - \cca_2}\).
 
\end{lemma}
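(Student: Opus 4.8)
The plan is to prove the two inclusions of $\idealA^{(2)} = \idealA^2 + (\ccD_2)$ separately, handling the harder inclusion by a saturation argument with respect to $\ccz$.

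First I would record the two Hilbert--Burch syzygies of $\idealA = (\ccF, \ccG, \ccH)$, namely $\ccx^{\cca_1}\ccF + \ccy^{\ccb_1}\ccG + \ccz^{\ccc_1}\ccH = 0$ and $\ccz^{\ccc_2}\ccF + \ccx^{\cca_2}\ccG + \ccy^{\ccb_2}\ccH = 0$, the second being the relation $\ccz^{\ccc_2}\ccF = -\ccy^{\ccb_2}\ccH - \ccx^{\cca_2}\ccG$ used earlier in the paper. Reducing the parenthesised factor of $\ccD_2$ modulo $(\ccz^{\ccc_2})$ and using $\ccc_1 \ge \ccc_2$ (valid for both types), one finds $\ccF \equiv \ccy^{\ccb_1 + \ccb_2}$, $\ccG \equiv -\ccx^{\cca_1}\ccy^{\ccb_2}$, and $\ccH \equiv \ccx^{\cca_1 + \cca_2}$, whence $\ccH\ccF - \ccx^{\cca_2 - \cca_1}\ccy^{\ccb_1 - \ccb_2}\ccG^2 \equiv 0 \pmod{\ccz^{\ccc_2}}$ (in type $2$ the prefactor $\ccx^{\alpha_2}$ absorbs the negative exponent). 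Thus $\ccD_2 \in \ccR$, and since $\ccz^{\ccc_2}\ccD_2 = \ccx^{\alpha_2}(\ccH\ccF - \ccx^{\cca_2 - \cca_1}\ccy^{\ccb_1 - \ccb_2}\ccG^2) \in \idealA^2$ while $\ccz \notin \idealA$, we get $\ccD_2 \in \idealA^2 \ccR_{\idealA} \cap \ccR = \idealA^{(2)}$. Together with $\idealA^2 \subseteq \idealA^{(2)}$ this gives $\idealA^2 + (\ccD_2) \subseteq \idealA^{(2)}$.

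For the reverse inclusion, note that $\idealA$, and hence $\idealA^2$, $\idealA^{(2)}$, and $J := \idealA^2 + (\ccD_2)$, are homogeneous for the grading with $\deg \ccx = \ccn_1$, $\deg \ccy = \ccn_2$, $\deg \ccz = \ccn_3$ (one checks that $\ccH\ccF$ and $\ccx^{\cca_2 - \cca_1}\ccy^{\ccb_1 - \ccb_2}\ccG^2$ have equal degree, so $\ccD_2$ is homogeneous). The associated primes of a homogeneous ideal are homogeneous and, having radical $\idealA$, must contain the height-$2$ prime $\idealA$; as the only such homogeneous primes are $\idealA$ and $\clm$, we obtain $\operatorname{Ass}(\ccR/\idealA^2) \subseteq \set{\idealA, \clm}$ and likewise for $J$. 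Since $\ccz \in \clm \setminus \idealA$, saturating by $\ccz$ deletes the possible $\clm$-primary component of $\idealA^2$ while preserving its $\idealA$-primary component, so $\idealA^{(2)} = \idealA^2 : \ccz^{\infty}$, and this is the smallest $\ccz$-saturated ideal containing $\idealA^2$. It therefore suffices to show that $J$ is $\ccz$-saturated, i.e.\ that $\ccz$ is a nonzerodivisor on $\ccR/J$, equivalently $\clm \notin \operatorname{Ass}(\ccR/J)$.

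This last point is the main obstacle. I would establish it by reduction modulo $\ccz$: given a homogeneous $g$ with $\ccz g \in J$, write $\ccz g$ as an $\ccR$-linear combination of $\ccF^2, \ccF\ccG, \ccF\ccH, \ccG^2, \ccG\ccH, \ccH^2$ and $\ccD_2$, and pass to $\ccR/(\ccz) = \cck[\ccx, \ccy]$. There the images $\bar\ccF = \ccy^{\ccb_1 + \ccb_2}$, $\bar\ccG = -\ccx^{\cca_1}\ccy^{\ccb_2}$, $\bar\ccH = \ccx^{\cca_1 + \cca_2}$, and $\bar\ccD_2$ (the lowest $\ccz$-coefficient of the numerator, a monomial in $\ccx, \ccy$) are all monomials, so $\bar J$ is a monomial ideal. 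The relation $\overline{\ccz g} = 0$ then forces monomial cancellations which, tracked carefully over the subcases $\ccc_1 > \ccc_2$ and $\ccc_1 = \ccc_2$ (and the mirror type-$2$ configuration), show that the chosen combination is congruent modulo $\ccz$ to an element of $J$; dividing the remainder by $\ccz$ and iterating yields $g \in J$. Granting this, since $J$ is a $\ccz$-saturated ideal containing $\idealA^2$ and $\idealA^{(2)} = \idealA^2 : \ccz^{\infty}$ is the smallest such ideal, we conclude $\idealA^{(2)} \subseteq J$, completing the proof. An equivalent route is to exhibit the $\clm$-primary component $Q$ with $\idealA^2 = J \cap Q$ explicitly and verify the intersection, but this comes down to the same monomial bookkeeping.
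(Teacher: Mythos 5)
You should first note that the paper does not prove this lemma at all: it is quoted from Schenzel \cite{schenzel-examples-1988}, so there is no internal proof to compare against. Your overall skeleton is the right one and matches how such statements are actually established. The easy inclusion is handled correctly: your reduction modulo $\ccz^{\ccc_2}$ does show that $\ccD_2 \in \ccR$, and $\ccz^{\ccc_2}\ccD_2 \in \idealA^2$ with $\ccz \notin \idealA$ gives $\ccD_2 \in \idealA^{(2)}$. The reduction of the reverse inclusion to the statement that $\ccz$ is a nonzerodivisor on $\ccR/\idealB$, where $\idealB = \idealA^2 + (\ccD_2)$, is also sound, via $\operatorname{Ass}(\ccR/\idealA^2) \subseteq \set{\idealA, \clm}$ and $\idealA^{(2)} = \idealA^2 : \ccz^{\infty}$.

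The gap is that this last statement --- $\idealB : \ccz = \idealB$, equivalently $\clm \notin \operatorname{Ass}(\ccR/\idealB)$ --- is exactly the entire content of the lemma, and you do not prove it; you assert that ``monomial cancellations, tracked carefully'' will do the job. As described, the argument is circular: from $\ccz g = \sum a_i m_i$ and $\sum \bar a_i \bar m_i = 0$ in $\cck[\ccx,\ccy]$ you can only conclude $g \in \idealB$ if every syzygy of the reduced generators $\bar m_i$ lifts to a syzygy of the $m_i$, and that lifting property is equivalent to the depth claim you are trying to prove; it requires actual knowledge of the syzygies of $\idealB$, which you never exhibit. Moreover $\bar\ccD_2$ is \emph{not} a monomial when $\ccc_1 = \ccc_2$ (it is $-\ccx^{\alpha_2}(\ccy^{2\ccb_1+\ccb_2} + \ccx^{\cca_1+2\cca_2})$ modulo $\ccz$), so $\bar\idealB$ need not be a monomial ideal and the ``monomial bookkeeping'' does not even get off the ground in that case. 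The standard way to close this gap --- and the one this paper itself uses for the analogous statements about higher symbolic powers (Lemma~\ref{org9fefb4d}, quoted from \cite[Lemma 2.3]{schenzel-examples-1988}) --- is a length count: since $\idealB \subseteq \idealA^{(2)}$, it suffices to verify
$\length{\ccR}(\ccR/\idealB + (\ccz)) = 3 (2\cca_1 + \cca_2)\ccb$
(or the corresponding identity modulo $\ccx$), which is a finite, explicit computation with the reductions $\bar\ccF, \bar\ccG, \bar\ccH, \bar\ccD_2$ via Lemma~\ref{orgcc5bc69}. Until you carry out that computation (or an equivalent explicit verification of the primary decomposition), the proof is incomplete.
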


\begin{lemma}
 \comment{Lem.}
\label{sec:org9ad5c7c}
\label{orgce83638}
If \(\ccM\) is a matrix of type \(1\) or \(2\),
then \(\idealA^{(2)} \subseteq \clm \idealA\).
 
\end{lemma}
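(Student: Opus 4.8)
The plan is to reduce the inclusion to a single membership and then verify that membership by an explicit computation. First, by Lemma~\ref{orgc0004c6} we have $\clp^{(2)} = \clp^2 + (\ccD_2)$. Since the generators $\ccF$, $\ccG$, $\ccH$ of $\clp$ have positive degree we have $\clp \subseteq \clm$, and hence $\clp^2 = \clp\cdot\clp \subseteq \clm\clp$. It therefore suffices to show $\ccD_2 \in \clm\clp$.

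To prove this, I would first turn $\ccD_2$ into an explicit polynomial. Expanding $\ccH\ccF$ and $\ccG^2$ in the formula $\ccD_2 = \ccx^{\alpha_2}\ccz^{-\ccc_2}(\ccH\ccF - \ccx^{\cca_2-\cca_1}\ccy^{\ccb_1-\ccb_2}\ccG^2)$ from Lemma~\ref{orgc0004c6}, the unique $\ccz$-free monomial cancels, the smallest remaining power of $\ccz$ is $\ccz^{\ccc_2}$, so the factor $\ccz^{-\ccc_2}$ clears, and $\ccD_2$ becomes a polynomial with exactly four monomials, one of them carrying the coefficient $3$. The core of the argument is then to write this polynomial as $\ccD_2 = \alpha\ccF + \beta\ccG + \gamma\ccH$ with each coefficient a monomial lying in $\clm$. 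The mechanism is a matching: three of the four monomials of $\ccD_2$ are produced by the leading terms $\ccy^{\ccb_1+\ccb_2}$, $\ccz^{\ccc_1+\ccc_2}$, $\ccx^{\cca_1+\cca_2}$ of $\ccF$, $\ccG$, $\ccH$, respectively, while the three trailing terms each produce the same monomial (a multiple of $\ccy^{\ccb_1}$), and together these reproduce the coefficient-$3$ monomial.

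Concretely, I would check that for a matrix of type~$1$
\[
 \ccD_2 = -\ccy^{\ccb_1}\ccF - \ccx^{\cca_2-\cca_1}\ccy^{\ccb_1-\ccb_2}\ccz^{\ccc_1}\ccG - \ccx^{\cca_2}\ccz^{\ccc_1-\ccc_2}\ccH,
\]
and for a matrix of type~$2$
\[
 \ccD_2 = -\ccx^{\cca_1-\cca_2}\ccy^{\ccb_1}\ccF - \ccy^{\ccb_1-\ccb_2}\ccz^{\ccc_1}\ccG - \ccx^{\cca_1}\ccz^{\ccc_1-\ccc_2}\ccH.
\]
In each case the inequalities defining the type make every displayed exponent nonnegative, so these are genuine identities in $\ccR$; and since $\ccb_1 \ge 1$ and $\ccc_1 \ge 1$ and $\cca_1,\cca_2 \ge 1$, each coefficient is divisible by $\ccy^{\ccb_1}$, by $\ccz^{\ccc_1}$, or by a positive power of $\ccx$, hence lies in $\clm$. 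This gives $\ccD_2 \in \clm\clp$ and finishes the proof.

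The hard part is purely organizational: keeping the type~$1$ and type~$2$ calculations apart, since the prefactor $\ccx^{\alpha_2}$ and the sign of $\cca_1 - \cca_2$ differ between them, and arranging the decomposition so that the exponents of $\ccx$ are visibly nonnegative in each case. I would avoid the tempting shortcut of arguing that the weighted degree of $\ccD_2$ differs from those of $\ccF$, $\ccG$, $\ccH$ (so that its image in $\clp/\clm\clp$ vanishes for degree reasons), because $\deg\ccD_2$ can coincide with $\deg\ccG$; the explicit decomposition sidesteps this by exhibiting the vanishing directly.
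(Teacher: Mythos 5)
Your proposal is correct and follows essentially the same route as the paper: reduce to showing $\ccD_2 \in \clm\idealA$ via Lemma~\ref{orgc0004c6}, expand $\ccD_2$ into its four monomials, and exhibit the decomposition $\ccD_2 = -\ccx^{\alpha_2}(\ccy^{\ccb_1}\ccF + \ccx^{\cca_2-\cca_1}\ccy^{\ccb_1-\ccb_2}\ccz^{\ccc_1}\ccG + \ccx^{\cca_2}\ccz^{\ccc_1-\ccc_2}\ccH)$ with coefficients in $\clm$. Your two displayed identities are exactly this formula specialized to $\alpha_2 = 0$ (type $1$) and $\alpha_2 = \cca_1 - \cca_2$ (type $2$), so the only difference from the paper is that you split the cases rather than carry the uniform prefactor $\ccx^{\alpha_2}$.
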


\begin{proof}
 \comment{Proof.}
\label{sec:org3762f59}
Since
\begin{align*}
 \ccD_2 &= \ccz^{-\ccc_2} \ccx^{\alpha_2}( \ccF \ccH - \ccx^{\cca_2 - \cca_1} \ccy^{\ccb_1 - \ccb_2} \ccG^2) \\
 &= -\ccx^{\alpha_2} (\ccy^{2\ccb_1 + \ccb_2}  - 3 \ccx^{\cca_2} \ccy^{\ccb_1} \ccz^{\cc_1} + \ccx^{\cca_1 + 2 \cca_2} \ccz^{\ccc_1 - \ccc_2} + \ccx^{\cca_2 - \cca_1} \ccy^{\ccb_1 - \ccb_2} \ccz^{2 \ccc_1 + \ccc_2})\\
 &= - \ccx^{\alpha_2} (\ccy^{\ccb_1} \ccF + \ccx^{\cca_2 - \cca_1} \ccy^{\ccb_1 - \ccb_2} \ccz^{\ccc_1} \ccG + \ccx^{\cca_2} \ccz^{\ccc_1 - \ccc_2}\ccH),
\end{align*}
it follows from Lemma \ref{orgc0004c6} that \(\idealA^{(2)} \subseteq \clm \idealA\).
\end{proof}

\subsection{The third symbolic power of type \(1\)}
\label{sec:org161a838}
\label{org0850df7}

Let \(\ccM\) be a matrix of type 1. Then \(\cca_1 \le \cca_2\), \(\ccb_1 \ge \ccb_2\), and \(\ccc_1 \ge \ccc_2\). 
Let \(\alpha = \alpha_3 = \max \set{0,  2 \cca_1 - \cca_2}\), \(\beta = \beta_3 = \max \set{0, 2 \ccb_2 - \ccb_1}\), and \(\gamma = \gamma_3 = \max \set{0, 2 \ccc_2 - \ccc_1}\).
Let
\[
 \ccD'_3 = \ccy^{\beta} \ccz^{\gamma - \ccc_2} (\ccH \ccD_2 + \ccx^{\cca_2 - \cca_1} \ccy^{\ccb_1 - 2  \ccb_2} \ccF \ccG^2).
\]

\begin{lemma}[\hspace{1sp}\cite{goto-topics-1991, knodel-explicit-1992}]
 \comment{Lem. [\hspace{1sp}\cite{goto-topics-1991, knodel-explicit-1992}]}
\label{sec:org76be569}
\label{orgbaa3c84}
If \(\ccM\) is a matrix of type \(1\), then
\[
 \idealA^{(3)} = \idealA^{(2)} \idealA + (\ccD_3, \ccD'_3). 
\]
 
\end{lemma}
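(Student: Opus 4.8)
The plan is to establish the two inclusions of the asserted equality separately; writing $J = \clp^{(2)}\clp + (\ccD_3, \ccD'_3)$ for brevity and recalling from Lemma~\ref{orgc0004c6} that $\clp^{(2)} = \clp^2 + (\ccD_2)$ (so $\clp^{(2)}\clp = \clp^3 + \ccD_2\clp$), the containment $J \subseteq \clp^{(3)}$ is routine while the reverse carries all the weight. For $J \subseteq \clp^{(3)}$, the symbolic filtration gives $\clp^{(2)}\clp \subseteq \clp^{(2)}\clp^{(1)} \subseteq \clp^{(3)}$, so only $\ccD_3, \ccD'_3 \in \clp^{(3)}$ must be checked. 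Passing to the regular local ring $\ccR_{\clp}$, where $\clp^{(n)}\ccR_{\clp} = \clp^n\ccR_{\clp}$ and where $\ccx,\ccy,\ccz$ are units, the prefactors $\ccx^{\alpha}$, $\ccy^{\beta}$, $\ccz^{\gamma-\ccc_2}$ and the coefficient monomials all become units; since $\ccH \in \clp$ and $\ccD_2 \in \clp^{(2)} \subseteq \clp^2\ccR_{\clp}$ we get $\ccH\ccD_2 \in \clp^3\ccR_{\clp}$, while $\ccG^3$ and $\ccF\ccG^2$ lie in $\clp^3$, so $\ccD_3, \ccD'_3 \in \clp^3\ccR_{\clp} \cap \ccR = \clp^{(3)}$. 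The only subtlety in this direction is to confirm that $\ccD_3$ and $\ccD'_3$ are genuine elements of $\ccR$ rather than of $\ccR[\ccz^{-1}]$: one verifies divisibility of the parenthetical expressions by $\ccz^{\ccc_2-\gamma}$ by rewriting $\ccH\ccD_2$ through the two Hilbert--Burch syzygies $\ccx^{\cca_1}\ccF + \ccy^{\ccb_1}\ccG + \ccz^{\ccc_1}\ccH = 0$ and $\ccz^{\ccc_2}\ccF + \ccx^{\cca_2}\ccG + \ccy^{\ccb_2}\ccH = 0$ and tracking the exponent of $\ccz$.

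The reverse inclusion $\clp^{(3)} \subseteq J$ is the heart of the matter, and here I would use a saturation description of the symbolic power. Since $\clp$ has height $2$ in the three-dimensional ring $\ccR$, every associated prime of $\clp^3$ is graded and hence lies in $\set{\clp, \clm}$; as $\ccz \in \clm \setminus \clp$, saturating by $\ccz$ deletes exactly the $\clm$-primary component, giving $\clp^{(3)} = \clp^3 :_{\ccR} \ccz^{\infty}$. Granting this, it suffices to prove the single colon identity $J :_{\ccR} \ccz = J$, i.e.\ that $\ccz$ is a non-zero-divisor modulo $J$: for then any $\ccf \in \clp^{(3)}$ satisfies $\ccz^{t}\ccf \in \clp^3 \subseteq J$ for some $t$, and dividing by $\ccz$ one factor at a time keeps us inside $J$ until we reach $\ccf \in J$. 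Thus the whole problem reduces to the statement that $\ccz\ccg \in J$ forces $\ccg \in J$.

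This last step is where I expect the main obstacle to lie, since it is a concrete but delicate computation: one writes $\ccz\ccg$ as an $\ccR$-combination of the generators $\clp^3$, $\ccD_2\clp$, $\ccD_3$, $\ccD'_3$ of $J$, reduces modulo $\ccz$ in $\ccR/(\ccz) \cong \cck[\ccx,\ccy]$, and uses the two syzygies above to peel off the factor of $\ccz$ — the elements $\ccD_3$ and $\ccD'_3$ being precisely the $\ccz$-divisible combinations of $\ccH\ccD_2$ with $\ccG^3$ and with $\ccF\ccG^2$ that this division forces. The bookkeeping is heavy because the exponents $\alpha = \alpha_3$, $\beta = \beta_3$, $\gamma = \gamma_3$ vanish or not according to the signs of $2\cca_1 - \cca_2$, $2\ccb_2 - \ccb_1$, and $2\ccc_2 - \ccc_1$, so the argument splits into several cases of type $1$, each needing its own choice of syzygy to clear $\ccz$. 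As an independent cross-check one may instead compare Hilbert functions: the value-semigroup description of $\ccR/\clp \cong \cck[t^{n_1}, t^{n_2}, t^{n_3}]$ determines $\dim_{\cck}(\ccR/\clp^{(3)})_d$ in each weighted degree $d$, and matching it against $\dim_{\cck}(\ccR/J)_d$ confirms $J = \clp^{(3)}$ without tracing the division explicitly.
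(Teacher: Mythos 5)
This lemma is not proved in the paper at all: it is quoted from \cite{goto-topics-1991, knodel-explicit-1992}, so the relevant question is whether your argument would actually constitute a proof. The easy half is fine. Localizing at $\clp$ (where $\ccx,\ccy,\ccz$ are units because none of them lies in $\clp$) does give $\ccD_3, \ccD'_3 \in \clp^3\ccR_{\clp} \cap \ccR = \clp^{(3)}$ once you know they are honest polynomials, and your reduction of $\clp^{(3)} = \clp^3 :_{\ccR} \ccz^{\infty}$ is legitimate: the associated primes of $\clp^3$ are weighted-homogeneous, hence equal to $\clp$ or $\clm$, and $\ccz \in \clm \setminus \clp$. (The polynomiality of $\ccD_3, \ccD'_3$ is exactly what the paper's Lemma~\ref{org06b5254} establishes by exhibiting them as $\ccR$-combinations of $\ccF^2, \ccG^2, \ccH^2, \ccG\ccH$, so that point is genuinely checkable.)

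The gap is the reverse inclusion, which you correctly call the heart of the matter and then do not prove. You reduce it to the colon identity $J :_{\ccR} \ccz = J$, announce that the resulting bookkeeping is ``heavy'' and ``splits into several cases,'' and stop; the alternative Hilbert-function cross-check is likewise only named, not performed. But this is the entire content of the lemma --- without it you have only shown $J \subseteq \clp^{(3)}$, which is immediate. Nothing in what you wrote rules out $\clp^{(3)}$ containing an element not in $J$; indeed the whole difficulty of the subject is that one must produce exactly the right new generators at each symbolic level, and verifying that $\ccD_3$ and $\ccD'_3$ suffice is where the case analysis on $\alpha_3, \beta_3, \gamma_3$ lives. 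To close the gap along your second route you would need Schenzel's length criterion (the analogue of the paper's Lemma~\ref{org9fefb4d}, namely $\length{\ccR}(\ccR/(\idealB + (\ccx))) = \binom{4}{2}(\ccc_1 + 2\ccc_2)\ccb$ forces $\idealB = \clp^{(3)}$) together with an explicit computation of $J + (\ccx)$ or $J + (\ccz)$ via Lemma~\ref{orgcc5bc69} --- precisely the kind of calculation the paper carries out in Section~3 for the $\mres$th through $(\mres+2)$nd symbolic powers. As it stands, the proposal is a correct strategy with its decisive step missing.
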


\begin{lemma}
 \comment{Lem.}
\label{sec:org8457802}
\label{org06b5254}
If \(\ccM\) is a matrix of type \(1\), then \(\ccD_3\) and \(\ccD'_3\) can be expressed in terms of \(\ccF^2\), \(\ccG^2\),
\(\ccH^2\), and \(\ccG \ccH\) as follows:
\begin{equation}
\label{equ:lem-type1-formula1}
 \begin{split}
 \ccD_3 &= \ccx^{\alpha} y^{b_1 - b_2}  z^{\gamma}  F^2 -  x^{a_2 - 2  a_1 + \alpha}  y^{2  b_1 - 2  b_2} z^{c_1 + \gamma} G^2  \\
 &- x^{a_2 + \alpha} \ccz^{c_1 - 2 c_2 + \gamma} H^2 - 2  x^{a_2 - a_1 + \alpha}  y^{b_1 - b_2}  z^{c_1 - c_2 + \gamma} G  H,
 \end{split}
\end{equation}
\begin{equation}
\label{equ:lem-type1-formula2}
 \begin{split}
 \ccD'_3 &= y^{b_1 - b_2 + \beta} z^{\gamma}  F^2  - x^{2  a_2 -  a_1}  y^{b_1 - 2  b_2 + \beta} z^{c_1 - c_2 + \gamma} G^2 \\
 &- x^{a_2} \ccy^{\beta} \ccz^{c_1 - 2 c_2 + \gamma} H^2  - 2  x^{a_2 - a_1}  y^{b_1 - b_2 + \beta}  z^{c_1 - c_2 + \gamma} G  H.
 \end{split}
\end{equation}
In particular, \(\ccD_3, \ccD'_3 \in \idealA^{2}\).
 
\end{lemma}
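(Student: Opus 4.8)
The plan is to verify the two displayed formulas by a direct computation that rests on the single linear syzygy among \(\ccF,\ccG,\ccH\), and then to read off \(\ccD_3,\ccD_3'\in\clp^2\) from the nonnegativity of the exponents on the right-hand sides. First I would record the tools. From the defining binomials \(\ccF=\ccy^{\ccb_1+\ccb_2}-\ccx^{\cca_2}\ccz^{\ccc_1}\), \(\ccG=\ccz^{\ccc_1+\ccc_2}-\ccx^{\cca_1}\ccy^{\ccb_2}\), \(\ccH=\ccx^{\cca_1+\cca_2}-\ccy^{\ccb_1}\ccz^{\ccc_2}\), multiplying the bottom row of \(\ccM\) into the vector of signed maximal minors gives the syzygy
\[
 \ccz^{\ccc_2}\ccF+\ccx^{\cca_2}\ccG+\ccy^{\ccb_2}\ccH=0.
\]
I would also rewrite Lemma~\ref{orgc0004c6} in the form \(\ccz^{\ccc_2}\ccD_2=\ccH\ccF-\ccx^{\cca_2-\cca_1}\ccy^{\ccb_1-\ccb_2}\ccG^2\), which is legitimate because \(\alpha_2=\max\set{0,\cca_1-\cca_2}=0\) for a matrix of type \(1\).

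For \(\ccD_3\): the recurrence \eqref{org643446c} carries an explicit factor \(\ccz^{\gamma-\ccc_2}\) and hides a further \(\ccz^{-\ccc_2}\) inside \(\ccD_2\), so I would clear both denominators by multiplying through by \(\ccz^{2\ccc_2}\). Substituting the expression for \(\ccz^{\ccc_2}\ccD_2\) shows that \(\ccz^{2\ccc_2}\ccD_3\) has \(\ccx^{\alpha}\ccz^{\gamma}\) as a common factor, exactly as the claimed right-hand side of \eqref{equ:lem-type1-formula1} does; cancelling this factor (in the localization \(\ccR[\ccx^{-1},\ccz^{-1}]\)) reduces the first formula to a single polynomial identity in \(\ccF,\ccG,\ccH\). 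The decisive step is to eliminate the \(\ccF^2\)-term by writing \(\ccz^{2\ccc_2}\ccF^2=(\ccx^{\cca_2}\ccG+\ccy^{\ccb_2}\ccH)^2\) via the syzygy and expanding. After this substitution the coefficient of \(\ccH^2\) collapses to \(\ccy^{\ccb_1+\ccb_2}-\ccx^{\cca_2}\ccz^{\ccc_1}=\ccF\), the coefficient of \(\ccG\ccH\) becomes a monomial multiple of \(\ccx^{\cca_1}\ccy^{\ccb_2}-\ccz^{\ccc_1+\ccc_2}=-\ccG\), and the leftover discrepancy is a monomial multiple of \(\ccG^2\) that vanishes by the defining binomials for \(\ccG\) and \(\ccH\); this establishes \eqref{equ:lem-type1-formula1}.

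The computation for \(\ccD_3'\) is entirely parallel. I would again multiply by \(\ccz^{2\ccc_2}\), factor off the common \(\ccy^{\beta}\ccz^{\gamma}\), and substitute \(\ccz^{2\ccc_2}\ccF^2=(\ccx^{\cca_2}\ccG+\ccy^{\ccb_2}\ccH)^2\). Here the bookkeeping is even cleaner: once the \(\ccH^2\)- and \(\ccG\ccH\)-terms are identified exactly as above, the entire remaining discrepancy factors as
\[
 \ccx^{\cca_2-\cca_1}\ccy^{\ccb_1-2\ccb_2}\ccG^2\bigl(\ccz^{\ccc_2}\ccF+\ccx^{\cca_2}\ccG+\ccy^{\ccb_2}\ccH\bigr),
\]
which is zero by the syzygy; this gives \eqref{equ:lem-type1-formula2}.

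Finally, for the membership \(\ccD_3,\ccD_3'\in\clp^2\) it remains only to check that every exponent on the right-hand sides is nonnegative, so that both formulas are genuine \(\ccR\)-linear combinations of \(\ccF^2,\ccG^2,\ccH^2,\ccG\ccH\in\clp^2\). This follows from the type \(1\) inequalities \(\cca_1\le\cca_2\), \(\ccb_1\ge\ccb_2\), \(\ccc_1\ge\ccc_2\) together with the identities \(\cca_2-2\cca_1+\alpha=\max\set{0,\cca_2-2\cca_1}\ge0\), \(\ccc_1-2\ccc_2+\gamma=\max\set{0,\ccc_1-2\ccc_2}\ge0\), and \(\ccb_1-2\ccb_2+\beta=\max\set{0,\ccb_1-2\ccb_2}\ge0\). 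I expect the only real obstacle to be organizational: carrying the many exponents correctly through the \(\ccz^{2\ccc_2}\)-clearing and recognizing that the cubic-looking remainder left after the \(\ccF^2\)-substitution is, in each case, a monomial multiple of the syzygy (for \(\ccD_3'\)) or of the defining binomials (for \(\ccD_3\)), so that it cancels identically rather than merely landing in a higher power of \(\clp\).
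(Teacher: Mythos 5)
Your proposal is correct; I checked the pivotal claims. After substituting \(\ccz^{2\ccc_2}\ccF^2=(\ccx^{\cca_2}\ccG+\ccy^{\ccb_2}\ccH)^2\) into the right-hand sides, the coefficient of \(\ccH^2\) is \(\ccy^{\ccb_1+\ccb_2}-\ccx^{\cca_2}\ccz^{\ccc_1}=\ccF\), the coefficient of \(\ccG\ccH\) is \(2\ccx^{\cca_2-\cca_1}\ccy^{\ccb_1-\ccb_2}(\ccx^{\cca_1}\ccy^{\ccb_2}-\ccz^{\ccc_1+\ccc_2})=-2\ccx^{\cca_2-\cca_1}\ccy^{\ccb_1-\ccb_2}\ccG\), and the remaining discrepancy is \(\ccx^{\cca_2-\cca_1}\ccy^{\ccb_1-2\ccb_2}\ccG^2(\ccz^{\ccc_2}\ccF+\ccx^{\cca_2}\ccG+\ccy^{\ccb_2}\ccH)=0\) for \eqref{equ:lem-type1-formula2}, while for \eqref{equ:lem-type1-formula1} it is \(\ccx^{\cca_2-2\cca_1}\ccy^{\ccb_1-\ccb_2}\ccG^2\bigl(\ccx^{\cca_1}\ccH-\ccy^{\ccb_1-\ccb_2}\ccz^{\ccc_2}\ccG-\ccx^{2\cca_1+\cca_2}+\ccy^{\ccb_1-\ccb_2}\ccz^{\ccc_1+2\ccc_2}\bigr)=0\). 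The organization is genuinely different from the paper's, though: the paper expands \(\ccH\ccD_2-\ccx^{\cca_2-2\cca_1}\ccy^{2\ccb_1-2\ccb_2}\ccG^3\) and \(\ccH\ccD_2+\ccx^{\cca_2-\cca_1}\ccy^{\ccb_1-2\ccb_2}\ccF\ccG^2\) completely into seven-term sums of monomials and then regroups each sum by inspection as \(\ccz^{\ccc_2}\) times the claimed quadratic form in \(\ccF,\ccG,\ccH\). Your use of the row syzygy keeps the whole computation expressed in \(\ccF,\ccG,\ccH\), which makes it clearer \emph{why} the identity holds (it is forced by the Koszul relation among the minors) and is less error-prone than a full monomial expansion, at the cost of having to track the residual cubic terms carefully. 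Both arguments conclude the membership \(\ccD_3,\ccD_3'\in\idealA^2\) in the same way, from nonnegativity of the exponents via \(\cca_2-2\cca_1+\alpha=\max\set{0,\cca_2-2\cca_1}\) and its analogues.
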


\begin{proof}
 \comment{Proof.}
\label{sec:orgb92e8db}
A direct calculation shows that
\begin{align*}
 \ccD_2 &= \ccz^{- \ccc_2} (\ccH \ccF - \ccx^{\cca_2 - \cca_1} \ccy^{\ccb_1 - \ccb_2} \ccG^2) \\
 &= - \ccy^{2 \ccb_1 + \ccb_2} - \ccx^{\cca_1 + 2 \cca_2} \ccz^{\ccc_1 - \ccc_2} + 3 \ccx^{\cca_2} \ccy^{\ccb_1} \ccz^{\ccc_1} - \ccx^{\cca_2 - \cca_1} \ccy^{\ccb_1 - \ccb_2} \ccz^{2 \ccc_1 + \ccc_2}
\end{align*}
and
\begin{align*}
 & \ccx^{- \alpha} \ccz^{\ccc_2 - \gamma} \ccD_3 \\
 &= \ccH \ccD_2 - \ccx^{\cca_2 - 2\cca_1} \ccy^{2  \ccb_1 - 2 \ccb_2} \ccG^3 \\
 &= - \ccx^{2 \cca_1 + 3 \cca_2} z^{\ccc_1-\ccc_2} + 4 \ccx^{\cca_1 + 2 \cca_2} \ccy^{\ccb_1} z^{\ccc_1} - \ccx^{2 \cca_2} \ccy^{\ccb_1 - \ccb_2} \ccz^{2 \ccc_1 + \ccc_2}  +  \ccy^{3 \ccb_1 + \ccb_2} \ccz^{\ccc_2}\\
 & \phantom{=} - 6 \ccx^{\cca_2} \ccy^{2 \ccb_1} \ccz^{\ccc_1 + \ccc_2} + 4 \ccx^{\cca_2 - \cca_1} \ccy^{2 \ccb_1 - \ccb_2} \ccz^{2 \ccc_1 + 2 \ccc_2} - \ccx^{\cca_2 -2 \cca_1} \ccy^{2 \ccb_1 - 2 \ccb_2} \ccz^{3 \ccc_1 + 3 \ccc_2} \\
 &= \ccz^{\ccc_2} \bigl(\ccy^{\ccb_1 - \ccb_2}  \ccF^2 -  \ccx^{\cca_2 - 2 \cca_1}  \ccy^{2 \ccb_1 - 2 \ccb_2} \ccz^{\ccc_1} \ccG^2 - \ccx^{\cca_2} \ccz^{\ccc_1 - 2 \ccc_2} \ccH^2 - 2  \ccx^{\cca_2 - \cca_1}  \ccy^{\ccb_1 - \ccb_2} \ccz^{\ccc_1 - \ccc_2} \ccG \ccH \bigr).
 \end{align*}
This yields \eqref{equ:lem-type1-formula1}. We also see that \(\ccD_3 \in \idealA^2\) since the coefficients of \(F^2\), \(\ccG^2\), \(\ccH^2\), and \(\ccG \ccH\) are in \(\cck[\ccx, \ccy, \ccz]\).
Similarly,
\begin{align*}
 & \ccy^{- \beta} \ccz^{\ccc_2 - \gamma} \ccD'_3 \\
 &= \ccH \ccD_2 + \ccx^{\cca_2 - \cca_1} \ccy^{\ccb_1 - 2  \ccb_2} \ccF \ccG^2 \\
 &= - \ccx^{2 \cca_1+3 \cca_2} \ccz^{\ccc_1-\ccc_2} + 3 \ccx^{\cca_1 + 2 \cca_2} \ccy^{\ccb_1} \ccz^{\ccc_1} + \ccx^{2 \cca_2} \ccy^{\ccb_1-\ccb_2} \ccz^{2 \ccc_1+\ccc_2} + \ccy^{3 \ccb_1+\ccb_2} \ccz^{\ccc_2}  \\
 & \phantom{=}  -5 \ccx^{\cca_2} \ccy^{2 \ccb_1} \ccz^{\ccc_1+\ccc_2} + 2 \ccx^{\cca_2 -\cca_1} \ccy^{2 \ccb_1-\ccb_2} \ccz^{2 \ccc_1+2 \ccc_2} - \ccx^{2 \cca_2 -\cca_1} \ccy^{\ccb_1-2 \ccb_2} \ccz^{3 \ccc_1+2 \ccc_2} \\
 &= \ccz^{\ccc_2} \bigl(\ccy^{\ccb_1 - \ccb_2} \ccF^2  - \ccx^{2  \cca_2 -  \cca_1}  \ccy^{\ccb_1 - 2  \ccb_2} \ccz^{\ccc_1 - \ccc_2} \ccG^2 - \ccx^{\cca_2} \ccz^{\ccc_1 - 2 \ccc_2} \ccH^2  - 2  \ccx^{\cca_2 - \cca_1}  \ccy^{\ccb_1 - \ccb_2}  \ccz^{\ccc_1 - \ccc_2} \ccG \ccH \bigr).
\end{align*}
Therefore \eqref{equ:lem-type1-formula2} holds and \(\ccD_3' \in \idealA^2\).
\end{proof}

\begin{remark}
 \comment{Rem.}
\label{sec:org5e069c9}
\label{orgbdcb8f2}
Let \(\ccM\) be a matrix of type 1.
It follows from Lemmas \ref{orgbaa3c84} and \ref{org06b5254} that \(\idealA^{(3)} \subseteq \idealA^2\).
This fact, which also holds for a matrix of type 2, 
was proved by Nishida \cite{kojinishida-third-2008} for an arbitrary field and independently by Grifo \cite{grifo-stable-2020} for a field of characteristic not \(3\).
 
\end{remark}

\begin{theorem}
 \comment{Thm.}
\label{sec:org9c64f4e}
\label{org5623dd8}
If \(\ccM\) is a matrix of type \(1\),
then \(\idealA^{(3)} \not \subseteq \clm \idealA^2\) if and only if 
\begin{equation}
\label{equ:thm-type-1}
 \alpha = \gamma =  b_2 - b_1 = 0.
\end{equation}
 
\end{theorem}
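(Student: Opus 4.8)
The plan is to reduce the containment $\idealA^{(3)}\subseteq\clm\idealA^2$ to a membership question about the single element $\ccD_3$, and then to detect that membership by specializing modulo $\ccx$. First I would eliminate everything but $\ccD_3$ and $\ccD'_3$. By Lemma~\ref{orgbaa3c84} we have $\idealA^{(3)}=\idealA^{(2)}\idealA+(\ccD_3,\ccD'_3)$, and since $\idealA^{(2)}\subseteq\clm\idealA$ by Lemma~\ref{orgce83638}, the product $\idealA^{(2)}\idealA$ lies in $\clm\idealA^2$. Hence $\idealA^{(3)}\subseteq\clm\idealA^2$ if and only if $\ccD_3,\ccD'_3\in\clm\idealA^2$. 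The formulas \eqref{equ:lem-type1-formula1} and \eqref{equ:lem-type1-formula2} of Lemma~\ref{org06b5254} express $\ccD_3$ and $\ccD'_3$ as $\cck[\ccx,\ccy,\ccz]$-combinations of $\ccF^2,\ccG^2,\ccH^2,\ccG\ccH$, and a summand automatically lies in $\clm\idealA^2$ once its monomial coefficient lies in $\clm$, i.e.\ has positive degree.

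Next I would run through the coefficients using only the type-$1$ inequalities $\cca_1\le\cca_2$, $\ccb_1\ge\ccb_2$, $\ccc_1\ge\ccc_2$ and the definitions of $\alpha,\beta,\gamma$. Every coefficient of $\ccD'_3$, and every coefficient of $\ccD_3$ except that of $\ccF^2$, is forced into $\clm$: the $\ccG^2$-coefficient of $\ccD_3$ carries $\ccz^{\ccc_1+\gamma}$ with $\ccc_1\ge\ccc_2\ge1$, its $\ccH^2$-coefficient carries $\ccx^{\cca_2+\alpha}$ with $\cca_2\ge1$, its $\ccG\ccH$-coefficient has $\ccx$-exponent $\cca_2-\cca_1+\alpha\ge1$ (clear if $\cca_1<\cca_2$, and if $\cca_1=\cca_2$ then $\alpha=\cca_1\ge1$), and the coefficients of $\ccD'_3$ are handled identically, its $\ccF^2$-coefficient $\ccy^{\ccb_1-\ccb_2+\beta}$ having positive $\ccy$-degree since $\ccb_1=\ccb_2$ forces $\beta=\ccb_2\ge1$. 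Thus $\ccD'_3\in\clm\idealA^2$ unconditionally, while $\ccD_3\in\clm\idealA^2$ whenever its $\ccF^2$-coefficient $\ccx^{\alpha}\ccy^{\ccb_1-\ccb_2}\ccz^{\gamma}$ lies in $\clm$, that is, whenever $\alpha,\ \ccb_1-\ccb_2,\ \gamma$ are not all $0$. This yields the implication that $\idealA^{(3)}\not\subseteq\clm\idealA^2$ can fail only when $\alpha=\gamma=\ccb_1-\ccb_2=0$.

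For the converse, assume $\alpha=\gamma=\ccb_1-\ccb_2=0$. Then \eqref{equ:lem-type1-formula1} becomes $\ccD_3=\ccF^2-(\text{summands with coefficients in }\clm)$, so $\ccD_3\equiv\ccF^2\pmod{\clm\idealA^2}$ and it suffices to show $\ccF^2\notin\clm\idealA^2$. I would settle this by reducing modulo $\ccx$: under $\ccR\to\ccR/(\ccx)\cong\cck[\ccy,\ccz]$ the generators become the monomials $\overline{\ccF}=\ccy^{\ccb_1+\ccb_2}$, $\overline{\ccG}=\ccz^{\ccc_1+\ccc_2}$, $\overline{\ccH}=-\ccy^{\ccb_1}\ccz^{\ccc_2}$, so $\overline{\idealA}^2$ is a monomial ideal whose only generator that is a pure power of $\ccy$ is $\overline{\ccF^2}=\ccy^{2(\ccb_1+\ccb_2)}$. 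The image of $\clm\idealA^2$ is $(\ccy,\ccz)\overline{\idealA}^2$, and every pure power of $\ccy$ in this ideal is a multiple of $\ccy^{2(\ccb_1+\ccb_2)+1}$; hence $\overline{\ccF^2}\notin(\ccy,\ccz)\overline{\idealA}^2$, and therefore $\ccF^2\notin\clm\idealA^2$. Consequently $\ccD_3\notin\clm\idealA^2$ and $\idealA^{(3)}\not\subseteq\clm\idealA^2$, which completes the equivalence.

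The main obstacle is the converse step, namely the non-membership $\ccF^2\notin\clm\idealA^2$; the mod-$\ccx$ specialization is what turns it into an elementary monomial-divisibility check, since all the terms hiding the other variable disappear. I would also record that the argument needs no hypothesis on the characteristic of $\cck$: the only numerical coefficient that could vanish (the $2$ in the $\ccG\ccH$-term) sits in a summand that already lies in $\clm\idealA^2$, so it never interferes with either direction.
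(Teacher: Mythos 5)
Your proposal is correct and follows the paper's own route: reduce to $\ccD_3,\ccD_3'$ via Lemmas~\ref{orgbaa3c84} and~\ref{orgce83638}, read off the monomial coefficients in \eqref{equ:lem-type1-formula1} and \eqref{equ:lem-type1-formula2}, and observe that only the $\ccF^2$-coefficient of $\ccD_3$ can be a unit. Your mod-$\ccx$ monomial-ideal check for $\ccF^2\notin\clm\idealA^2$ is just a slightly more formal version of the paper's remark that $\ccF^2$ is the only generator of $\idealA^2$ containing a pure power of $\ccy$.
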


\begin{proof}
 \comment{Proof.}
\label{sec:orgced3f3b}
First, suppose that \eqref{equ:thm-type-1} holds. 
Then the coefficient of \(\ccF^2\) in \eqref{equ:lem-type1-formula1} is equal to 1.
Therefore \(\ccD_3 \not \in \clm \idealA^2\) because, among the six generators \(\ccF^2\), \(\ccG^2\), \(\ccH^2\), \(\ccF \ccG\), \(\ccF \ccH\), and \(\ccG \ccH\) of \(\idealA^2\), the polynomial \(F^2\) only contains a power of \(\ccy\) as a term. 
It follows from Lemma \ref{orgbaa3c84} that \(\idealA^{(3)} \not \subseteq \clm \idealA^2\).

Next, suppose that \eqref{equ:thm-type-1} does not hold.
Then the coefficients of \(\ccF^2\), \(\ccG^2\), \(\ccH^2\), and \(\ccG \ccH\) in
\eqref{equ:lem-type1-formula1} and \eqref{equ:lem-type1-formula2} are in \(\clm\).
Therefore \(\ccD_3, \ccD_3' \in \clm \idealA^2\).
By Lemmas \ref{orgce83638} and \ref{orgbaa3c84}, we see that \(\idealA^{(3)} \subseteq \clm \idealA^2\).
\end{proof}

\subsection{The third symbolic power of type \(2\)}
\label{sec:orgd9ef487}
\label{org0f88f4e}
Let  \(\ccM\) be a matrix of type 2. Then \(\cca_1 > \cca_2\), \(\ccb_1 > \ccb_2\), and \(\ccc_1 > \ccc_2\). 
Let \(\alpha = \max \set{0, 2 \cca_2 - \cca_1}\), \(\beta = \max \set{0, 2 \ccb_2 - \ccb_1}\), and \(\gamma = \max \set{0, 2 \ccc_2 - \ccc_1}\).
Note that the definition of \(\alpha\) is different from that of in Section \ref{org0850df7}.
Let \(\sigma\) be the permutation \((x, y, z)(a_1, b_1, c_1)(a_2, b_2, c_2)(\alpha, \beta, \gamma)\).
Note that \(\ccF^\sigma =  \ccG\) and \(\ccF^{\sigma^2} = \ccH\).
Let
\begin{align*}
 \ccD_3 &= \ccz^{- \ccc_2} \ccy^{\beta} \ccx^{\alpha}(\ccx^{\cca_1 - 2 \cca_2}  \ccH^2 \ccF + \ccy^{\ccb_1 - 2  \ccb_2} \ccG^3 ), \\
 \ccD_3' = \ccD_3^{\sigma} &= \ccx^{- \cca_2} \ccz^{\gamma} \ccy^{\beta}(\ccy^{\ccb_1 - 2 \ccb_2} \ccF^2 \ccG + \ccz^{\ccc_1 - 2  \ccc_2}  \ccH^3),\\
 \ccD_3''= \ccD_3^{\sigma^2}  &= \ccy^{- \ccb_2} \ccx^{\alpha} \ccz^{\gamma}(\ccz^{\ccc_1 - 2 \ccc_2} \ccG^2 \ccH + \ccx^{\cca_1 - 2  \cca_2}  \ccF^3 ).
\end{align*}

\begin{lemma}[\hspace{1sp}\cite{goto-topics-1991, knodel-explicit-1992}]
 \comment{Lem. [\hspace{1sp}\cite{goto-topics-1991, knodel-explicit-1992}]}
\label{sec:orgbea4507}
\label{orgd0a69b7}
Let \(\ccM\) be a matrix of type \(2\). If the characteristic of \(\cck\) is not \(2\), then
\[
 \idealA^{(3)} = \idealA^{(2)} \idealA + (\ccD_3, \ccD'_3, \ccD_3''). 
\]
If the characteristic of \(\cck\) is \(2\), then \(x^{- \cca_2 + \alpha} \ccD_3 = y^{- \ccb_2 + \beta} \ccD_3 = z^{- \ccc_2 + \gamma} \ccD_3 \in k[x,y,z]\) and
\[
 \idealA^{(3)} = \idealA^{(2)} \idealA + (\ccz^{- \ccc_2 + \gamma} \ccD_3). 
\]
 
\end{lemma}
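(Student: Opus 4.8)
The plan is to prove the two inclusions of the identity $\idealA^{(3)} = \idealA^{(2)}\idealA + (\ccD_3, \ccD_3', \ccD_3'')$ separately, and then to read off the characteristic-$2$ statement by seeing how the three generators collapse. Set $J = \idealA^{(2)}\idealA + (\ccD_3, \ccD_3', \ccD_3'')$.

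For the inclusion $J \subseteq \idealA^{(3)}$ I would work in the two-dimensional regular local ring $\ccR_{\idealA}$. The Hilbert--Burch resolution of $\idealA = \ccI_2(\ccM)$ gives, from the two rows of $\ccM$, the syzygies $\ccx^{\cca_1}\ccF + \ccy^{\ccb_1}\ccG + \ccz^{\ccc_1}\ccH = 0$ and $\ccz^{\ccc_2}\ccF + \ccx^{\cca_2}\ccG + \ccy^{\ccb_2}\ccH = 0$. Since $\ccx, \ccy, \ccz \notin \idealA$, they are units in $\ccR_{\idealA}$, so the second syzygy expresses $\ccF = -\ccz^{-\ccc_2}(\ccx^{\cca_2}\ccG + \ccy^{\ccb_2}\ccH)$ as a unit combination of $\ccG$ and $\ccH$; hence $(\ccG, \ccH)$ is a regular system of parameters of $\ccR_{\idealA}$ and $\operatorname{ord}_{\idealA}$ is additive. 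Substituting this expression for $\ccF$ into $\ccD_3 = \ccz^{-\ccc_2}\ccy^{\beta}\ccx^{\alpha}(\ccx^{\cca_1 - 2\cca_2}\ccH^2\ccF + \ccy^{\ccb_1 - 2\ccb_2}\ccG^3)$ turns every term into a monomial of degree $3$ in $\ccG, \ccH$ with unit coefficient, whence $\operatorname{ord}_{\idealA}(\ccD_3) \ge 3$ and $\ccD_3 \in \idealA^{(3)}$. The permutation $\sigma$ fixes each of the two syzygies (while sending $(\ccG, \ccH)$ to another regular system of parameters), so the same computation applied to $\ccD_3' = \ccD_3^{\sigma}$ and $\ccD_3'' = \ccD_3^{\sigma^2}$ gives $\ccD_3', \ccD_3'' \in \idealA^{(3)}$. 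Before this I would check that $\ccD_3, \ccD_3', \ccD_3''$ actually lie in $\ccR$; this is where the strict inequalities $\cca_1 > \cca_2$, $\ccb_1 > \ccb_2$, $\ccc_1 > \ccc_2$ of type $2$ together with the definitions of $\alpha, \beta, \gamma$ are used to clear the factor $\ccz^{-\ccc_2}$ and the possibly negative exponents. Together with $\idealA^{(2)}\idealA = \idealA^{(2)}\idealA^{(1)} \subseteq \idealA^{(3)}$, this yields $J \subseteq \idealA^{(3)}$.

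The reverse inclusion $\idealA^{(3)} \subseteq J$ is the main obstacle. The key structural observation is that $\idealA^{(3)}/J$ has finite length: localizing at $\idealA$, the inclusion just proved together with $\operatorname{ord}_{\idealA}(\ccD_3) = 3$ shows $J\ccR_{\idealA} = (\idealA\ccR_{\idealA})^3 = \idealA^{(3)}\ccR_{\idealA}$, and at every prime $\mathfrak p \neq \clm$ the localizations of $\idealA^{(3)}$ and $J$ agree as well, since $\idealA^{(\ell)}_{\mathfrak p} = \idealA^{\ell}_{\mathfrak p}$ there for all $\ell$ (the property appearing in Theorem~\ref{org34085be}). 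Thus $\idealA^{(3)}/J$ is supported only at $\clm$, and it suffices to show its length is $0$. I would establish this by a Hilbert-function count in the weighting $\deg \ccx = \ccn_1$, $\deg \ccy = \ccn_2$, $\deg \ccz = \ccn_3$, under which $\idealA$ and all of $\ccD_2, \ccD_3, \ccD_3', \ccD_3''$ are homogeneous: the inclusion $J \subseteq \idealA^{(3)}$ gives a graded surjection $\ccR/J \twoheadrightarrow \ccR/\idealA^{(3)}$, and it remains to check that the two Hilbert functions agree in each of the finitely many degrees where they could differ. Using Lemma~\ref{orgc0004c6} in the form $\idealA^{(2)} = \idealA^2 + (\ccD_2)$, one has $\idealA^{(2)}\idealA = \idealA^3 + \idealA(\ccD_2)$, so this is the same as showing that the images of $\ccD_3, \ccD_3', \ccD_3''$ span $\idealA^{(3)}/\idealA^{(2)}\idealA$. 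The genuinely delicate bookkeeping---identifying exactly which elements of $\idealA^{(3)}$ fail to lie in $\idealA^{(2)}\idealA$ and matching their number against the proposed generators---is where the real work lies, and I expect it to be the principal difficulty.

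Finally, for the characteristic-$2$ case I would verify the stated identities directly. Expanding $\ccD_3, \ccD_3', \ccD_3''$ with the syzygies, the cross terms carrying an even coefficient vanish in characteristic $2$, and the three expressions are forced to coincide after multiplication by the indicated unit monomials; in particular $\ccz^{-\ccc_2+\gamma}\ccD_3$ is a single polynomial in $\ccR$ generating, modulo $\idealA^{(2)}\idealA$, the submodule previously generated by all three. The finite-length vanishing argument then goes through with this single generator, giving $\idealA^{(3)} = \idealA^{(2)}\idealA + (\ccz^{-\ccc_2+\gamma}\ccD_3)$. The subtlety here, beyond the reverse inclusion, is confirming the precise divisibilities that place $\ccz^{-\ccc_2+\gamma}\ccD_3$ in $\ccR$ and that the collapse from three generators to one is exactly a characteristic-$2$ phenomenon.
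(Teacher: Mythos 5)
First, a remark on the comparison you asked for: the paper does not prove this lemma at all --- it is imported verbatim from \cite{goto-topics-1991, knodel-explicit-1992} --- so there is no in-paper proof to measure your argument against; the closest in-paper analogue is the Section 3 computation of $\idealA^{(\ccl)}$ for type $1'$ via length counting. Judged on its own terms, your sketch correctly establishes only the easy half. The localization argument for $J \subseteq \idealA^{(3)}$ is sound: the row syzygy $\ccz^{\ccc_2}\ccF + \ccx^{\cca_2}\ccG + \ccy^{\ccb_2}\ccH = 0$ does make $(\ccG,\ccH)$ a regular system of parameters of $\ccR_{\idealA}$, and $\ccD_3$ (once checked to lie in $\ccR$, which Lemma~\ref{org745fc89} does by exhibiting $\ccD_3 \in \idealA^2$) visibly has order at least $3$ there, hence lies in $\idealA^3\ccR_{\idealA} \cap \ccR = \idealA^{(3)}$; the same goes for its images under $\sigma$.

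The genuine gap is the reverse inclusion, which is the entire content of the lemma and which you explicitly defer (``the genuinely delicate bookkeeping \dots is where the real work lies''). Your reduction to a finite-length statement is correct --- $\idealA^{(3)}/J$ is supported at $\clm$ because the monomial curve has an isolated singularity --- but the proposed ``Hilbert-function comparison'' is not an executable plan as stated: computing the Hilbert function of $\ccR/\idealA^{(3)}$ is essentially equivalent to computing $\idealA^{(3)}$, which is what is to be proved. The device that actually closes arguments of this shape, and the one the authors deploy in Section 3, is Schenzel's length criterion (\cite{schenzel-examples-1988}, the source of Lemma~\ref{org9fefb4d}): one reduces the explicit generators of $J$ modulo $\ccx$ or $\ccz$, computes the length of $\ccR/(J+(\ccx))$ by a monomial-ideal count as in Lemma~\ref{orgcc5bc69}, and matches it against $\binom{4}{2}$ times the multiplicity of $\ccR/(\idealA+(\ccx))$. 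Without that count the identity is not proved. The characteristic-$2$ clause is likewise only asserted: you would need to verify the precise monomial identities relating $\ccD_3$, $\ccD_3'$, $\ccD_3''$ after the coefficient $2$ vanishes (using, e.g., the expansion in Lemma~\ref{org745fc89}), and then redo the length count with the single generator $\ccz^{-\ccc_2+\gamma}\ccD_3$. In short: what you have is a correct containment $J \subseteq \idealA^{(3)}$ plus a plausible strategy, not a proof of the lemma.
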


\begin{lemma}
 \comment{Lem.}
\label{sec:org6c9adc3}
\label{org745fc89}
Let \(\ccM\) be a matrix of type \(2\). Then
\begin{equation}
\label{equ:lem-type2-formula}
\begin{split}
 \ccD_3 &= \ccx^{\cca_1 - 2  \cca_2 + \alpha}  \ccy^{\ccb_1 - \ccb_2 + \beta} \ccz^{\ccc_2} \ccF^2 
 + \ccx^{\alpha}  \ccy^{\ccb_1 - 2  \ccb_2 + \beta} \ccz^{\ccc_1} \ccG^2 \\
 &\phantom{=} - \ccx^{\cca_1 - \cca_2 + \alpha}  \ccy^{\beta} \ccz^{\ccc_1 - \ccc_2}  \ccH^2  
 +  2 \ccx^{\cca_1 - \cca_2 + \alpha}  \ccy^{\ccb_1 - \ccb_2 + \beta}  \ccF  \ccG
\end{split}
\end{equation}
and \(\ccD_3 \in \idealA^2\).
 
\end{lemma}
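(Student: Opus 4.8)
The plan is to establish the displayed identity \eqref{equ:lem-type2-formula} by a short structured computation and then to read off \(\ccD_3 \in \idealA^2\) from its shape. Multiplying the definition of \(\ccD_3\) by \(\ccz^{\ccc_2}\), and the asserted right-hand side by \(\ccz^{\ccc_2}\) as well, and cancelling the common factor \(\ccx^{\alpha} \ccy^{\beta}\) (which is legitimate since \(\ccR\) is a domain), the lemma is equivalent to the single identity
\[
 \ccx^{\cca_1 - 2 \cca_2} \ccH^2 \ccF + \ccy^{\ccb_1 - 2 \ccb_2} \ccG^3 = \ccx^{\cca_1 - 2 \cca_2} \ccy^{\ccb_1 - \ccb_2} \ccz^{2 \ccc_2} \ccF^2 + \ccy^{\ccb_1 - 2 \ccb_2} \ccz^{\ccc_1 + \ccc_2} \ccG^2 - \ccx^{\cca_1 - \cca_2} \ccz^{\ccc_1} \ccH^2 + 2 \ccx^{\cca_1 - \cca_2} \ccy^{\ccb_1 - \ccb_2} \ccz^{\ccc_2} \ccF \ccG,
\]
which I would verify in the Laurent ring \(\ccR[\ccx^{-1}, \ccy^{-1}, \ccz^{-1}]\), since the exponents \(\cca_1 - 2 \cca_2\) and \(\ccb_1 - 2 \ccb_2\) may be negative.

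To prove this identity I would collapse its cubic and mixed terms using the two defining binomials. Combining the two \(\ccH^2\)-terms gives \(\ccx^{\cca_1 - 2 \cca_2} \ccH^2 (\ccF + \ccx^{\cca_2} \ccz^{\ccc_1}) = \ccx^{\cca_1 - 2 \cca_2} \ccy^{\ccb_1 + \ccb_2} \ccH^2\), because \(\ccF + \ccx^{\cca_2} \ccz^{\ccc_1} = \ccy^{\ccb_1 + \ccb_2}\); and combining the \(\ccG^3\)- and \(\ccG^2\)-terms gives \(\ccy^{\ccb_1 - 2 \ccb_2} \ccG^2 (\ccG - \ccz^{\ccc_1 + \ccc_2}) = - \ccx^{\cca_1} \ccy^{\ccb_1 - \ccb_2} \ccG^2\), because \(\ccG - \ccz^{\ccc_1 + \ccc_2} = - \ccx^{\cca_1} \ccy^{\ccb_2}\). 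After these two collapses, factoring out the common monomial \(\ccx^{\cca_1 - 2 \cca_2} \ccy^{\ccb_1 - \ccb_2}\) reduces the identity to
\[
 \ccy^{2 \ccb_2} \ccH^2 = \ccz^{2 \ccc_2} \ccF^2 + \ccx^{2 \cca_2} \ccG^2 + 2 \ccx^{\cca_2} \ccz^{\ccc_2} \ccF \ccG = (\ccz^{\ccc_2} \ccF + \ccx^{\cca_2} \ccG)^2.
\]

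This last equation is precisely the square of the syzygy \(\ccz^{\ccc_2} \ccF + \ccx^{\cca_2} \ccG + \ccy^{\ccb_2} \ccH = 0\), i.e.\ the relation \(\ccz^{\ccc_2} \ccF = - \ccy^{\ccb_2} \ccH - \ccx^{\cca_2} \ccG\) coming from the second row of \(\ccM\) that was already used for type \(1\); squaring it gives \((\ccz^{\ccc_2} \ccF + \ccx^{\cca_2} \ccG)^2 = \ccy^{2 \ccb_2} \ccH^2\), which is the desired equality. This establishes \eqref{equ:lem-type2-formula}. For the membership \(\ccD_3 \in \idealA^2\) I would then check nonnegativity of every exponent in \eqref{equ:lem-type2-formula}: from \(\alpha = \max \set{0, 2 \cca_2 - \cca_1}\) and \(\beta = \max \set{0, 2 \ccb_2 - \ccb_1}\) one gets \(\cca_1 - 2 \cca_2 + \alpha = \max \set{0, \cca_1 - 2 \cca_2} \ge 0\) and \(\ccb_1 - 2 \ccb_2 + \beta = \max \set{0, \ccb_1 - 2 \ccb_2} \ge 0\), while \(\cca_1 - \cca_2 + \alpha\), \(\ccb_1 - \ccb_2 + \beta\), \(\ccc_1 - \ccc_2\), and the remaining exponents are nonnegative because \(\cca_1 > \cca_2\), \(\ccb_1 > \ccb_2\), \(\ccc_1 > \ccc_2\) in type \(2\). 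Hence \(\ccD_3\) is a genuine element of \(\ccR\), and since \eqref{equ:lem-type2-formula} exhibits it as an \(\ccR\)-linear combination of \(\ccF^2\), \(\ccG^2\), \(\ccH^2\), and \(\ccF \ccG\), all of which lie in \(\idealA^2\), we conclude \(\ccD_3 \in \idealA^2\).

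The computation carries no deep obstacle once the two collapsing substitutions are spotted, since everything funnels into the single square \((\ccz^{\ccc_2} \ccF + \ccx^{\cca_2} \ccG)^2 = \ccy^{2 \ccb_2} \ccH^2\). The one point requiring genuine care is the exponent bookkeeping: the intermediate cancellations must be performed in the localization where \(\cca_1 - 2 \cca_2\) or \(\ccb_1 - 2 \ccb_2\) can be negative, and one must then separately confirm that the final formula lies back in \(\ccR\). I would therefore present the reduction to the syzygy square first and verify the nonnegativity of exponents afterward, so that the legitimacy of the division by \(\ccz^{\ccc_2}\) (equivalently, that \(\ccz^{\ccc_2}\) divides \(\ccx^{\alpha} \ccy^{\beta}(\ccx^{\cca_1 - 2 \cca_2} \ccH^2 \ccF + \ccy^{\ccb_1 - 2 \ccb_2} \ccG^3)\)) is transparent from the resulting polynomial expression.
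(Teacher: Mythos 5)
Your proof is correct, and it takes a genuinely different route from the paper's. The paper proves \eqref{equ:lem-type2-formula} by brute force: it expands \(\ccx^{-\alpha}\ccy^{-\beta}\ccz^{\ccc_2}\ccD_3 = \ccx^{\cca_1-2\cca_2}\ccH^2\ccF + \ccy^{\ccb_1-2\ccb_2}\ccG^3\) into its seven monomials and then regroups them as \(\ccz^{\ccc_2}\) times the desired combination of \(\ccF^2\), \(\ccG^2\), \(\ccH^2\), \(\ccF\ccG\), which in practice means expanding those four products and matching terms. You instead absorb the \(\ccH^2\)- and \(\ccG^2\)-correction terms via \(\ccF + \ccx^{\cca_2}\ccz^{\ccc_1} = \ccy^{\ccb_1+\ccb_2}\) and \(\ccG - \ccz^{\ccc_1+\ccc_2} = -\ccx^{\cca_1}\ccy^{\ccb_2}\), after which the whole identity collapses to \(\ccy^{2\ccb_2}\ccH^2 = (\ccz^{\ccc_2}\ccF + \ccx^{\cca_2}\ccG)^2\), i.e.\ the square of the syzygy \(\ccz^{\ccc_2}\ccF + \ccx^{\cca_2}\ccG + \ccy^{\ccb_2}\ccH = 0\) coming from the second row of \(\ccM\) (the same relation the paper uses to see \(\ccD_1 = \ccF\)). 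I checked the bookkeeping: the cancellation of \(\ccx^{\alpha}\ccy^{\beta}\) and the multiplication by \(\ccz^{\ccc_2}\) are legitimate in the Laurent ring, the two collapsing substitutions match the terms exactly, and the final exponent check (\(\cca_1 - 2\cca_2 + \alpha = \max\{0, \cca_1 - 2\cca_2\}\), etc.) is what makes both the division by \(\ccz^{\ccc_2}\) and the membership \(\ccD_3 \in \idealA^2\) transparent — a point the paper leaves implicit. What your approach buys is an explanation of \emph{why} the formula holds (everything funnels into one syzygy square) and a verification that avoids any full monomial expansion; what the paper's buys is uniformity with the analogous computations for \(\ccD_3\) and \(\ccD'_3\) in the type \(1\) case, which are handled by the same direct-expansion template.
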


\begin{proof}
 \comment{Proof.}
\label{sec:orga9e6398}
A direct calculation shows that
\begin{align*}
 &\ccx^{-\alpha} \ccy^{-\beta} \ccz^{\ccc_2} \ccD_3 \\
 &= \ccx^{\cca_1 - 2  \cca_2}  \ccH^2  \ccF + \ccy^{\ccb_1 - 2  \ccb_2}  \ccG^3  \\
 &= - 2 \ccx^{2 \cca_1 - \cca_2} \ccy^{2 \ccb_1 + \ccb_2} \ccz^{\ccc_2} - \ccx^{3 \cca_1 + \cca_2} \ccz^{\ccc_1} + 5 \ccx^{2 \cca_1} \ccy^{\ccb_1} \ccz^{\ccc_1 + \ccc_2} + \ccx^{\cca_1 - 2 \cca_2} \ccy^{3 \ccb_1 + \ccb_2} \ccz^{2\ccc_2}\\
 & \phantom{= }-\ccx^{\cca_1 - \cca_2} \ccy^{2 \ccb_1} \ccz^{\ccc_1 + 2\ccc_2} - 3 \ccx^{\cca_1} \ccy^{\ccb_1 - \ccb_2} \ccz^{2 \ccc_1 + 2 \ccc_2} + \ccy^{\ccb_1 - 2 \ccb_2} \ccz^{3 \ccc_1 + 3 \ccc_2}  \\
 &= \ccz^{\ccc_2} \bigl (\ccx^{\cca_1 - 2  \cca_2}  \ccy^{\ccb_1 - \ccb_2} \ccz^{\ccc_2} \ccF^2 + \ccy^{\ccb_1 - 2  \ccb_2} \ccz^{\ccc_1} \ccG^2
 - \ccx^{\cca_1 - \cca_2} \ccz^{\ccc_1 - \ccc_2} \ccH^2  + 2 \ccx^{\cca_1 - \cca_2}  \ccy^{\ccb_1 - \ccb_2}  \ccF  \ccG \bigr).
\end{align*}
Therefore \eqref{equ:lem-type2-formula} holds and \(\ccD_3 \in \idealA^2\).
\end{proof}

\begin{theorem}
 \comment{Thm.}
\label{sec:org46874cb}
\label{org72d0da8}
If \(\ccM\) is a matrix of type \(2\), then \(\idealA^{(3)} \subseteq \clm \idealA^2\).
 
\end{theorem}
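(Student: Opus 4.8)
The plan is to use the decomposition of $\idealA^{(3)}$ from Lemma~\ref{orgd0a69b7} and to show that each of its pieces lies in $\clm\idealA^2$. First I would dispose of the term $\idealA^{(2)}\idealA$: by Lemma~\ref{orgce83638} we have $\idealA^{(2)} \subseteq \clm\idealA$, whence $\idealA^{(2)}\idealA \subseteq \clm\idealA^2$ in every characteristic. It therefore remains to show that the extra generators lie in $\clm\idealA^2$, and the argument splits according to the characteristic of $\cck$, exactly as in Lemma~\ref{orgd0a69b7}.

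Suppose first that $\operatorname{char}\cck \neq 2$, so that $\idealA^{(3)} = \idealA^{(2)}\idealA + (\ccD_3, \ccD_3', \ccD_3'')$. I would read off from the explicit expansion \eqref{equ:lem-type2-formula} of Lemma~\ref{org745fc89} that each of the four coefficients of $\ccF^2$, $\ccG^2$, $\ccH^2$, and $\ccF\ccG$ is a non-constant monomial, hence lies in $\clm$: the $\ccF^2$- and $\ccG^2$-coefficients are divisible by $\ccz^{\ccc_2}$ and $\ccz^{\ccc_1}$ respectively, while the $\ccH^2$- and $\ccF\ccG$-coefficients are divisible by $\ccx^{\cca_1 - \cca_2}$, and all the exponents $\ccc_2$, $\ccc_1$, $\cca_1 - \cca_2$ are positive (using $\ccc_1, \ccc_2 \ge 1$ and the type-$2$ inequality $\cca_1 > \cca_2$). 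This gives $\ccD_3 \in \clm\idealA^2$ at once. For $\ccD_3' = \ccD_3^{\sigma}$ and $\ccD_3'' = \ccD_3^{\sigma^2}$ I would invoke the cyclic symmetry $\sigma$: applying $\sigma$ and $\sigma^2$ to \eqref{equ:lem-type2-formula} produces analogous expansions of $\ccD_3'$ and $\ccD_3''$ as $\clm$-combinations of products of two of $\ccF, \ccG, \ccH$, since $\sigma$ permutes $(\cca_i, \ccb_i, \ccc_i, \alpha, \beta, \gamma)$ cyclically and preserves both the type-$2$ inequalities and the property that each coefficient is a non-constant monomial. Hence $\ccD_3', \ccD_3'' \in \clm\idealA^2$ as well, and this case is complete.

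Now suppose $\operatorname{char}\cck = 2$, so that $\idealA^{(3)} = \idealA^{(2)}\idealA + (\ccz^{-\ccc_2 + \gamma}\ccD_3)$. Here the $\ccF\ccG$-term of \eqref{equ:lem-type2-formula} vanishes, and I would multiply the three surviving terms by $\ccz^{\gamma - \ccc_2}$ to obtain
\[
 \ccz^{-\ccc_2 + \gamma}\ccD_3 = \ccx^{\cca_1 - 2\cca_2 + \alpha}\ccy^{\ccb_1 - \ccb_2 + \beta}\ccz^{\gamma}\ccF^2 + \ccx^{\alpha}\ccy^{\ccb_1 - 2\ccb_2 + \beta}\ccz^{\ccc_1 - \ccc_2 + \gamma}\ccG^2 - \ccx^{\cca_1 - \cca_2 + \alpha}\ccy^{\beta}\ccz^{\ccc_1 - 2\ccc_2 + \gamma}\ccH^2.
\]
All three exponents of $\ccz$ are nonnegative (this is precisely the content of Lemma~\ref{orgd0a69b7}'s assertion that the element is a polynomial), and now the $\ccF^2$-coefficient is divisible by $\ccy^{\ccb_1 - \ccb_2}$, the $\ccG^2$-coefficient by $\ccz^{\ccc_1 - \ccc_2}$, and the $\ccH^2$-coefficient by $\ccx^{\cca_1 - \cca_2}$; since $\ccb_1 > \ccb_2$, $\ccc_1 > \ccc_2$, and $\cca_1 > \cca_2$ by type $2$, each coefficient lies in $\clm$. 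Thus $\ccz^{-\ccc_2 + \gamma}\ccD_3 \in \clm\idealA^2$, completing this case and the proof.

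I expect the only real subtlety to be the bookkeeping of exponents: verifying that the various differences such as $\cca_1 - 2\cca_2 + \alpha$, $\ccb_1 - 2\ccb_2 + \beta$, and $\ccc_1 - 2\ccc_2 + \gamma$ are nonnegative (which follows from the definitions $\alpha = \max\set{0, 2\cca_2 - \cca_1}$, and so on) so that all the monomials in sight are genuine polynomials, and being careful that in the characteristic-$2$ branch it is the shifted element $\ccz^{\gamma - \ccc_2}\ccD_3$, not $\ccD_3$ itself, whose coefficients must be checked against $\clm$.
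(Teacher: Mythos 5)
Your proposal is correct and follows essentially the same route as the paper: decompose $\idealA^{(3)}$ via Lemma~\ref{orgd0a69b7}, handle $\idealA^{(2)}\idealA$ with Lemma~\ref{orgce83638}, read off from \eqref{equ:lem-type2-formula} that every coefficient lies in $\clm$ (using $\cca_1 > \cca_2$, $\ccb_1 > \ccb_2$, $\ccc_1 > \ccc_2$), transfer to $\ccD_3'$ and $\ccD_3''$ by the symmetry $\sigma$, and in characteristic $2$ check the shifted element $\ccz^{\gamma - \ccc_2}\ccD_3$ after the $\ccF\ccG$-term drops out. The exponent bookkeeping you flag as the only subtlety is exactly what the paper verifies, and your checks are accurate.
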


\begin{proof}
 \comment{Proof.}
\label{sec:org2e2cdfc}
First, suppose that the characteristic of \(\cck\) is not \(2\).
Since the coefficients of \(F^2, G^2, H^2\) and \(GH\) in \eqref{equ:lem-type2-formula} are in \(\clm\),
we see that \(\ccD_3 \in \clm \idealA^2\). By symmetry, \(\ccD'_3, \ccD''_3 \in \clm \idealA^2\). 
It follows from Lemmas \ref{orgce83638} and \ref{orgd0a69b7} that \(\idealA^{(3)} \subseteq \clm \idealA^2\).

Next, suppose that the characteristic of \(\cck\) is \(2\).
By Lemma \ref{org745fc89},
\begin{equation}
\label{equ:thm-type2}
\begin{split}
 \ccz^{- \ccc_2 + \gamma} \ccD_3 &= - \ccx^{\cca_1 - 2  \cca_2 + \alpha}  \ccy^{\ccb_1 - \ccb_2 + \beta} \ccz^{\gamma} \ccF^2 - \ccx^{\alpha}  \ccy^{\ccb_1 - 2  \ccb_2 + \beta} \ccz^{\ccc_1 - \ccc_2 + \gamma} \ccG^2\\
 &\phantom{=} + \ccx^{\cca_1 - \cca_2 + \alpha}  \ccy^{\beta} \ccz^{\ccc_1 - 2\ccc_2 + \gamma}  \ccH^2.
\end{split}
\end{equation}
Since the coefficients of \(F^2\), \(G^2\), and \(H^2\) in \eqref{equ:thm-type2} are in \(\clm\),
it follows that \(\ccz^{- \ccc_2 + \gamma} \ccD_3 \in \clm \idealA^2\).
Therefore \(\idealA^{(3)} \subseteq \clm \idealA^2\).
\end{proof}

\section{Up to the \(r + 2\)nd symbolic power of type \(1'\)}
\label{sec:orgbfd5a1f}
\label{org75e5b10}
Let \(\ccM\) be a matrix of type \(1'\). 
Then \(\cca_1 \le \cca_2\), \(\ccb_1 = \ccb_2\), \(\ccc_1 \ge \ccc_2\), and \(\cca_2/\cca_1 \le \ccc_1/\ccc_2\).
Let \(\mres = \mres[\ccM] = \lfloor \cca_2/\cca_1 \rfloor + 1\).
In this section, we calculate up to the \(\mres + 2\)nd symbolic power of \(\idealA\).
Let \(\ccb = \ccb_1\) and \(\myy = \ccy^{\ccb}\).
Note that
\[
 \length{\ccR}(\ccR/\idealA + (\ccx)) = (\ccc_1 + 2 \ccc_2) \ccb \tand \length{\ccR}(\ccR/\idealA + (\ccz)) = (2 \cca_1 + \cca_2) \ccb,
\]
where \(\ccR = \cck[\ccx, \ccy, \ccz]\) and \(\length{\ccR}(\ccR/\idealA + (\ccx))\) is the length of the \(\ccR\)-module \(\ccR/\idealA + (\ccx)\).
By \cite[Lemma 2.3]{schenzel-examples-1988}, we obtain the following result.

\begin{lemma}
 \comment{Lem.}
\label{sec:org98dc84c}
\label{org9fefb4d}
Let \(\ccM\) be a matrix of type \(1'\).
If \(\idealB \subseteq \idealA^{(\ccn)}\) and one of the following two conditions holds, then \(\idealB = \idealA^{(\ccn)}\).
\begin{enumerate}
\item \(\displaystyle \length{\ccR}(\ccR/\idealB + (\ccx)) = \binom{\ccn + 1}{2} (\ccc_1 + 2 \ccc_2) \ccb\);
\item \(\displaystyle \length{\ccR}(\ccR/\idealB + (\ccz)) = \binom{\ccn + 1}{2} (2 \cca_1 + \cca_2) \ccb\).
\end{enumerate}
 
\end{lemma}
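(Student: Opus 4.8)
The plan is to deduce $\idealB = \idealA^{(\ccn)}$ from the vanishing of the finitely generated module $T := \idealA^{(\ccn)}/\idealB$. I will carry out the argument under hypothesis~(1); hypothesis~(2) follows verbatim after replacing $\ccx$ by $\ccz$ and using $\length{\ccR}(\ccR/\idealA + (\ccz)) = (2 \cca_1 + \cca_2) \ccb$ in place of the length of $\ccR/\idealA + (\ccx)$.

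First I would record the structural facts that power the proof. Since the defining homomorphism sends $\ccx \mapsto t^{\ccn_1} \neq 0$, we have $\ccx \notin \idealA$; as $\idealA^{(\ccn)}$ is $\idealA$-primary, its only associated prime is $\idealA$, so $\ccx$ is a nonzerodivisor on $\ccR/\idealA^{(\ccn)}$. Consequently $\ccR/\idealA^{(\ccn)}$ is a one-dimensional Cohen--Macaulay module and $\operatorname{Tor}_1^{\ccR}(\ccR/\idealA^{(\ccn)}, \ccR/(\ccx)) = \ker\bigl(\ccx \colon \ccR/\idealA^{(\ccn)} \to \ccR/\idealA^{(\ccn)}\bigr) = 0$.

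The crux is the computation of $\length{\ccR}(\ccR/\idealA^{(\ccn)} + (\ccx))$. Because $\ccx$ is a parameter on the one-dimensional Cohen--Macaulay module $\ccR/\idealA^{(\ccn)}$, this colength equals the Hilbert--Samuel multiplicity $e(\ccx;\, \ccR/\idealA^{(\ccn)})$. The associativity formula for multiplicities, together with the fact that $\idealA$ is the unique prime of dimension $1$ in the support of $\ccR/\idealA^{(\ccn)}$, reduces this to $e(\ccx;\, \ccR/\idealA^{(\ccn)}) = \length{\ccR_{\idealA}}(\ccR_{\idealA}/\idealA^{\ccn}\ccR_{\idealA}) \cdot e(\ccx;\, \ccR/\idealA)$. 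The first factor is $\binom{\ccn + 1}{2}$ because $\ccR_{\idealA}$ is a two-dimensional regular local ring, and the second factor is $\length{\ccR}(\ccR/\idealA + (\ccx)) = (\ccc_1 + 2\ccc_2)\ccb$. Hence $\length{\ccR}(\ccR/\idealA^{(\ccn)} + (\ccx)) = \binom{\ccn+1}{2}(\ccc_1 + 2\ccc_2)\ccb$. This is precisely the content extracted from \cite[Lemma 2.3]{schenzel-examples-1988}, and it is the step I expect to be the main obstacle: one must justify both that the colength of a parameter on a Cohen--Macaulay module equals its multiplicity and that the associativity formula isolates the single contribution coming from $\idealA$, thereby collapsing everything to the given value $(\ccc_1 + 2\ccc_2)\ccb$ and the regular-local-ring computation $\binom{\ccn+1}{2}$.

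With this value in hand the comparison is immediate. Tensoring the short exact sequence $0 \to T \to \ccR/\idealB \to \ccR/\idealA^{(\ccn)} \to 0$ with $\ccR/(\ccx)$ and using the vanishing of $\operatorname{Tor}_1$ above yields an exact sequence $0 \to T/\ccx T \to \ccR/\idealB + (\ccx) \to \ccR/\idealA^{(\ccn)} + (\ccx) \to 0$. Taking lengths and substituting hypothesis~(1) together with the computation above, I get $\length{\ccR}(T/\ccx T) = \binom{\ccn+1}{2}(\ccc_1 + 2\ccc_2)\ccb - \binom{\ccn+1}{2}(\ccc_1 + 2\ccc_2)\ccb = 0$. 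Finally, endowing $\ccR$ with the grading $\deg \ccx = \ccn_1$, $\deg \ccy = \ccn_2$, $\deg \ccz = \ccn_3$ — under which $\idealA$ and $\idealA^{(\ccn)}$ are homogeneous, as is $\idealB$ in every application of the lemma — the module $T$ is a finitely generated graded module on which $\ccx$ acts as a homogeneous element of positive degree. Since $T/\ccx T = 0$, graded Nakayama forces $T = 0$, that is, $\idealB = \idealA^{(\ccn)}$, as desired.
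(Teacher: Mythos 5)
Your argument is correct, but be aware that the paper contains no proof of this lemma at all: it is quoted directly from \cite[Lemma 2.3]{schenzel-examples-1988} (``By [Lemma 2.3], we obtain the following result''), so what you have written is a reconstruction of Schenzel's argument rather than a variant of anything in the paper. The reconstruction is sound, and it isolates exactly the right ingredients: $\ccx \notin \idealA$ together with $\operatorname{Ass}(\ccR/\idealA^{(\ccn)}) = \{\idealA\}$ makes $\ccx$ a nonzerodivisor, whence $\length{\ccR}(\ccR/\idealA^{(\ccn)} + (\ccx)) = e(\ccx;\ccR/\idealA^{(\ccn)})$; the associativity formula collapses to the single minimal prime $\idealA$; and $\length{\ccR_{\idealA}}(\ccR_{\idealA}/\idealA^{\ccn}\ccR_{\idealA}) = \binom{\ccn+1}{2}$ because $\ccR_{\idealA}$ is a two-dimensional regular local ring. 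The one point that deserves emphasis is the final Nakayama step: for a finitely generated module $T$ over a polynomial ring, $T = \ccx T$ does \emph{not} force $T = 0$ in general (elements $1 + \ccx g$ are not units), so the conclusion genuinely requires $T$ to be graded, i.e.\ $\idealB$ to be homogeneous for the weighting $\deg \ccx = \ccn_1$, $\deg \ccy = \ccn_2$, $\deg \ccz = \ccn_3$. You flag this, and it is harmless here since every $\idealB$ to which the lemma is applied is generated by the weighted-homogeneous elements $\ccD_{\ccl}$, $\ccF$, $\ccG$, $\ccH$; but strictly speaking the lemma as stated should carry that hypothesis (or one should instead argue via the surjection $\ccR/(\idealB + (\ccx)) \twoheadrightarrow \ccR/(\idealA^{(\ccn)} + (\ccx))$ of equal finite lengths and then a graded or local argument, which runs into the same requirement).
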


\comment{connect}
\label{sec:org19ba373}
The next lemma enables us to calculate the lengths of certain modules.

\begin{lemma}[\hspace{1sp}\cite{goto-gorensteinness-1993}]
 \comment{Lem. [\hspace{1sp}\cite{goto-gorensteinness-1993}]}
\label{sec:orgc8cd029}
\label{orgcc5bc69}
Let \(\ccu_0, \dotsc, \ccu_{\ccn}, \ccv_0, \dotsc, \ccv_{\ccn}\) be non-negative integers such that
\[
 \ccu_0 \ge \ccu_1 \ge \dotsb \ge \ccu_{\ccn - 1} > \ccu_{\ccn} = 0 \tand \ccv_0 = 0 < \ccv_1 \le \dotsb \le \ccv_{\ccn}.
\]
If \(\ccR = \cck[\ccx, \ccy]\) and
 \(\ccI = (\ccx^{\ccu_\cci} \ccy^{\ccv_\ccj} : \cci + \ccj = \ccn)\), then
\[
 \length{\ccR}(\ccR/\ccI) = \sum_{\cci = 0}^{\ccn - 1} \ccu_\cci (\ccv_{\cci + 1} - \ccv_{\cci}).
\]
 
\end{lemma}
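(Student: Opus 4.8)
The plan is to read off $\length{\ccR}(\ccR/\ccI)$ as a lattice-point count. Since $\ccI$ is a monomial ideal, the residues of the monomials $\ccx^{\cca}\ccy^{\ccb}$ that do not lie in $\ccI$ form a $\cck$-basis of $\ccR/\ccI$, so $\length{\ccR}(\ccR/\ccI)$ equals the number of such \emph{standard monomials}. The hypotheses $\ccu_\ccn=0$ and $\ccv_0=0$ place the pure powers $\ccx^{\ccu_0}$ and $\ccy^{\ccv_\ccn}$ among the generators, so $\ccR/\ccI$ is Artinian and the count is finite. Identifying $\ccx^{\cca}\ccy^{\ccb}$ with the lattice point $(\cca,\ccb)$ in the first quadrant, the generators cut out a descending staircase with outer corners at the points $(\ccu_\cci,\ccv_\cci)$, $\cci=0,\dotsc,\ccn$, and the problem becomes to count the lattice points lying below it.

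The key step is to make the membership test explicit. A generator $\ccx^{\ccu_\cck}\ccy^{\ccv_\cck}$ divides $\ccx^{\cca}\ccy^{\ccb}$ precisely when $\cca\ge\ccu_\cck$ and $\ccb\ge\ccv_\cck$. Fixing $\ccb$ and letting $\cci$ be the largest index with $\ccv_\cci\le\ccb$, such a divisibility can only arise from $\cck\le\cci$; and because the $\ccu_\cck$ are non-increasing, the weakest of these requirements is $\cca\ge\ccu_\cci$. Hence $\ccx^{\cca}\ccy^{\ccb}\in\ccI$ if and only if $\cca\ge\ccu_\cci$, so $(\cca,\ccb)$ is standard exactly when $0\le\cca<\ccu_\cci$, where $\cci$ is determined by $\ccv_\cci\le\ccb<\ccv_{\cci+1}$. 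Monotonicity of both sequences is precisely what collapses the whole family of divisibility constraints to this single threshold.

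Granting the criterion, the count is a strip-by-strip sum. I would partition the standard monomials by the horizontal strip $\ccv_\cci\le\ccb<\ccv_{\cci+1}$ in which they lie: this strip has height $\ccv_{\cci+1}-\ccv_\cci$ and, by the criterion, admits exactly the values $\cca=0,\dotsc,\ccu_\cci-1$, contributing $\ccu_\cci(\ccv_{\cci+1}-\ccv_\cci)$ monomials. Summing over $\cci=0,\dotsc,\ccn-1$ gives the stated formula, the strips with $\ccb\ge\ccv_\ccn$ being empty because $\ccu_\ccn=0$. As a consistency check one can count instead by vertical strips, obtaining $\sum_{\ccj=1}^{\ccn}\ccv_\ccj(\ccu_{\ccj-1}-\ccu_\ccj)$; summation by parts, using $\ccu_\ccn=\ccv_0=0$, shows this equals $\sum_{\cci=0}^{\ccn-1}\ccu_\cci(\ccv_{\cci+1}-\ccv_\cci)$.

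The main obstacle I anticipate is not the counting but the degenerate cases hidden by the weak inequalities: when consecutive $\ccu_\cci$ coincide or consecutive $\ccv_\cci$ coincide, some of the listed generators are redundant and the corresponding strip shrinks to zero width or zero height. I would verify that both the membership criterion and the summation are insensitive to this, since a collapsed strip simply contributes the term $0$, so that no separate treatment of strict versus non-strict inequalities is required. Checking the criterion cleanly in the presence of these ties is the one place that demands care; everything following it is a direct summation.
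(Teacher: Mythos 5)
The paper does not prove this lemma at all --- it is quoted from Goto--Nishida--Shimoda \cite{goto-gorensteinness-1993} without proof --- so there is no argument of the authors' to compare yours against. Your staircase count is the standard proof of this fact and it is correct: the monomial basis of $\ccR/\ccI$, the reduction of the membership test to the single threshold $\cca \ge \ccu_{\cci}$ on the strip $\ccv_{\cci} \le \ccb < \ccv_{\cci+1}$ (which is exactly where both monotonicity hypotheses are used), and the strip-by-strip summation are all sound, and your observation that ties in the $\ccu$'s or $\ccv$'s only produce strips contributing $0$ disposes of the degenerate cases. The summation-by-parts cross-check, using $\ccu_{\ccn} = \ccv_0 = 0$, is also right.

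One point you glossed over without comment: you took the generators to be $\ccx^{\ccu_{\cci}}\ccy^{\ccv_{\cci}}$, i.e.\ you paired $\ccu_{\cci}$ with $\ccv_{\cci}$, whereas the statement as printed pairs $\ccu_{\cci}$ with $\ccv_{\ccj}$ for $\cci + \ccj = \ccn$. Read literally, the printed indexing puts $\ccx^{\ccu_{\ccn}}\ccy^{\ccv_0} = 1$ among the generators, so $\ccI = \ccR$ and the claimed length formula would be false; the pairing must be by equal index (equivalently, the $\ccv$'s should be indexed in the reverse order) for the generators to form a descending staircase with the pure powers $\ccx^{\ccu_0}$ and $\ccy^{\ccv_{\ccn}}$ at the ends. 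That equal-index reading is the one consistent both with the formula and with every application of the lemma later in the paper (e.g.\ the length computations in the proof of Theorem~\ref{org319617b}), so you proved the intended statement; but since your write-up never acknowledges the discrepancy, you should state explicitly which indexing you are using.
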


\comment{connect}
\label{sec:org6d2f579}
To apply Lemma \ref{orgcc5bc69}, we calculate \(\ccD_{\ccl} + (\ccx)\) and  \(\ccD_{\ccl} + (\ccz)\) in the next two lemmas.

\begin{lemma}
 \comment{Lem.}
\label{sec:org89bb523}
\label{org2a0af15}
Let \(\ccM\) be a matrix of type \(1'\) and \(\mres = \mres[\ccM]\).
If \(1 \le \ccl \le \mres\), then \(\ccD_{\ccl} \in \idealA^{(\ccl)}\) and
\begin{equation}
\label{equ:lem-dn-1}
 \ccD_{\ccl} \equiv  ( - \myy)^{\ccl + 1} - x^{(\ccl - 1)\cca_1 + \ccl \cca_2} \ccz^{\ccc_1 - (\ccl - 1) \ccc_2} \pmod{\ccx^{\cca_2 - (\ccl - 1) \cca_1} \ccz^{\ccc_1 - (\ccl - 1) \ccc_2 + 1}};
\end{equation}
moreover, 
\[
\ccD_{\ccl} \equiv ( - \myy)^{\ccl + 1} \begin{cases}
 \! \! \! \! \pmod{\ccx} & \tif \cca_2 > (\mres - 1) \cca_1 \\
 \! \! \! \! \pmod{\ccz} & \tif \cca_2 = (\mres - 1) \cca_1.
 \end{cases}
\]
 
\end{lemma}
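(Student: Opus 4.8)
The plan is to prove \eqref{equ:lem-dn-1} by induction on $\ccl$ and to extract the membership $\ccD_\ccl \in \idealA^{(\ccl)}$ and the two ``moreover'' congruences along the way. Since $\ccM$ is of type $1'$ we have $\ccb_1 = \ccb_2 = \ccb$, so that $\ccF = \myy^2 - \ccx^{\cca_2}\ccz^{\ccc_1}$, $\ccG = \ccz^{\ccc_1 + \ccc_2} - \myy\,\ccx^{\cca_1}$, $\ccH = \ccx^{\cca_1 + \cca_2} - \myy\,\ccz^{\ccc_2}$, and the factors $\ccy^{(\ccn-1)(\ccb_1 - \ccb_2)}$ in \eqref{org643446c} are trivial. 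First I would record that $\alpha_\ccl = \gamma_\ccl = 0$ for $1 \le \ccl \le \mres$: by \eqref{equ:inequality-r} we have $(\ccl-1)\cca_1 \le (\mres-1)\cca_1 \le \cca_2$, while $\mres - 1 = \lfloor \cca_2/\cca_1 \rfloor \le \cca_2/\cca_1 \le \ccc_1/\ccc_2$ together with \eqref{equ:type-1-p} gives $(\ccl-1)\ccc_2 \le (\mres-1)\ccc_2 \le \ccc_1$. Hence in this range the recursion \eqref{org643446c} collapses to $\ccz^{\ccc_2}\ccD_\ccl = \ccH\ccD_{\ccl-1} - \ccx^{\cca_2 - (\ccl-1)\cca_1}\ccG^\ccl$. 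The base case $\ccl = 1$ is the identity $\ccz^{\ccc_2}\ccF = -\myy\,\ccH - \ccx^{\cca_2}\ccG$ noted before \eqref{equ:type-1-p}, which gives $\ccD_1 = \ccF$ and turns \eqref{equ:lem-dn-1} into an equality.

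For the inductive step I would first upgrade the hypothesis \eqref{equ:lem-dn-1} for $\ccl-1$ to the congruence $\ccD_{\ccl-1} \equiv (-\myy)^\ccl \pmod{\ccz^{\ccc_2}}$: its error term $\ccx^{(\ccl-2)\cca_1 + (\ccl-1)\cca_2}\ccz^{\ccc_1 - (\ccl-2)\ccc_2}$ and its modulus both have $\ccz$-degree at least $\ccc_2$ because $(\ccl-1)\ccc_2 \le \ccc_1$. Reducing $\ccH\ccD_{\ccl-1} - \ccx^{\cca_2 - (\ccl-1)\cca_1}\ccG^\ccl$ modulo $\ccz^{\ccc_2}$ and using $\ccH \equiv \ccx^{\cca_1 + \cca_2}$ and $\ccG^\ccl \equiv (-\myy\,\ccx^{\cca_1})^\ccl$ collapses it to $\ccx^{\cca_1 + \cca_2}\bigl(\ccD_{\ccl-1} - (-\myy)^\ccl\bigr) \equiv 0$; thus $\ccz^{\ccc_2}$ divides the right-hand side and $\ccD_\ccl \in \ccR$. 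Membership $\ccD_\ccl \in \idealA^{(\ccl)}$ then follows because $\ccz \notin \idealA$: it suffices to produce $m$ with $\ccz^m\ccD_\ccl \in \idealA^\ccl$, and multiplying the recursion by a power of $\ccz$ that carries $\ccD_{\ccl-1}$ into $\idealA^{\ccl-1}$ lands the right-hand side in $\ccH\,\idealA^{\ccl-1} + \idealA^\ccl \subseteq \idealA^\ccl$.

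To prove \eqref{equ:lem-dn-1} itself, set $N = \ccx^{\cca_2 - (\ccl-1)\cca_1}\ccz^{\ccc_1 - (\ccl-1)\ccc_2 + 1}$ and let $A$ denote the claimed right-hand side. Because $(N)$ is a monomial ideal, $\ccD_\ccl \equiv A \pmod{N}$ is equivalent to $\ccz^{\ccc_2}\ccD_\ccl \equiv \ccz^{\ccc_2}A \pmod{\ccz^{\ccc_2}N}$, so I would compute $\ccH\ccD_{\ccl-1} - \ccx^{\cca_2 - (\ccl-1)\cca_1}\ccG^\ccl$ modulo the monomial $\ccz^{\ccc_2}N$. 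The induction hypothesis lets me replace $\ccD_{\ccl-1}$ by its two leading terms $(-\myy)^\ccl - \ccx^{(\ccl-2)\cca_1 + (\ccl-1)\cca_2}\ccz^{\ccc_1 - (\ccl-2)\ccc_2}$, since $\ccH$ times the previous modulus lies in $(\ccz^{\ccc_2}N)$. Expanding $\ccH\cdot(-\myy)^\ccl$ gives $\ccz^{\ccc_2}(-\myy)^{\ccl+1} + \ccx^{\cca_1 + \cca_2}(-\myy)^\ccl$; the only part of $-\ccx^{\cca_2 - (\ccl-1)\cca_1}\ccG^\ccl$ that survives modulo $\ccz^{\ccc_2}N$ is the top $\myy$-term $-\ccx^{\cca_1 + \cca_2}(-\myy)^\ccl$ (every lower term of $\ccG^\ccl$ carries $\ccz$-degree at least $\ccc_1 + \ccc_2$ and vanishes); and $\ccH$ applied to the second leading term contributes $-\ccx^{(\ccl-1)\cca_1 + \ccl\cca_2}\ccz^{\ccc_1 - (\ccl-2)\ccc_2}$. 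The decisive point is the cancellation of the two $\ccx^{\cca_1 + \cca_2}(-\myy)^\ccl$ contributions, after which what remains is exactly $\ccz^{\ccc_2}A$. Dividing by $\ccz^{\ccc_2}$ yields the congruence.

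Finally, the two ``moreover'' statements come from reducing \eqref{equ:lem-dn-1} one step further. Modulo $\ccx$, both the error term $\ccx^{(\ccl-1)\cca_1 + \ccl\cca_2}\ccz^{\ccc_1 - (\ccl-1)\ccc_2}$ and the modulus vanish once $\cca_2 - (\ccl-1)\cca_1 \ge 1$, which holds for all $\ccl \le \mres$ exactly when $\cca_2 > (\mres-1)\cca_1$; this is the first case. When $\cca_2 = (\mres-1)\cca_1$ I reduce modulo $\ccz$: for $\ccl \le \mres - 1$ the error term has $\ccz$-degree $\ccc_1 - (\ccl-1)\ccc_2 \ge \ccc_2 \ge 1$ and disappears, and for $\ccl = \mres$ it disappears provided $\ccc_1 > (\mres-1)\ccc_2$. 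I expect the only real obstacle to be this last subcase: if one also had $\ccc_1 = (\mres-1)\ccc_2$, then the surviving error term $\ccx^{(\mres-1)\cca_1 + \mres\cca_2}$ would spoil the congruence modulo $\ccz$. I would rule this out using the standing assumption that $\idealA$ is not a complete intersection. Writing $q = \mres - 1$, the relations $\ccG = \ccH = 0$ on the curve read $(q+1)\ccc_2\ccn_3 = \ccb\,\ccn_2 + \cca_1\ccn_1$ and $(q+1)\cca_1\ccn_1 = \ccb\,\ccn_2 + \ccc_2\ccn_3$; subtracting gives $(q+2)(\cca_1\ccn_1 - \ccc_2\ccn_3) = 0$, hence $\cca_1\ccn_1 = \ccc_2\ccn_3$ and $\ccx^{\cca_1} - \ccz^{\ccc_2} \in \idealA$. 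This is a nonzero weighted-homogeneous element of degree $\cca_1\ccn_1$, strictly below $\min(\deg\ccF, \deg\ccG, \deg\ccH)$, contradicting that $\ccF, \ccG, \ccH$ minimally generate $\idealA$. Therefore $\ccc_1 > (\mres-1)\ccc_2$ whenever $\cca_2 = (\mres-1)\cca_1$, and the congruence modulo $\ccz$ holds throughout $1 \le \ccl \le \mres$.
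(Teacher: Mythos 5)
Your proof is correct and follows essentially the same route as the paper's: the same induction on \(\ccl\) through the recursion \(\ccz^{\ccc_2}\ccD_{\ccl} = \ccH\ccD_{\ccl-1} - \ccx^{\cca_2-(\ccl-1)\cca_1}\ccG^{\ccl}\), the same cancellation of the two \(\ccx^{\cca_1+\cca_2}(-\myy)^{\ccl}\) terms, and the same case analysis for the reductions modulo \(\ccx\) and \(\ccz\). The only divergence is minor: to rule out \(\ccc_1=(\mres-1)\ccc_2\) when \(\cca_2=(\mres-1)\cca_1\), the paper factors \(\ccH-\ccG=(\ccx^{\cca_1}-\ccz^{\ccc_2})(\cdots)\) to contradict primality, whereas you extract the same element \(\ccx^{\cca_1}-\ccz^{\ccc_2}\in\idealA\) from the semigroup relations and contradict minimal generation by a weighted-degree count.
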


\begin{proof}
 \comment{Proof.}
\label{sec:orgd892e4f}
We show \eqref{equ:lem-dn-1} by induction on \(\ccl\).
If \(\ccl = 1\), then it is obvious.
Suppose that \(\ccl \ge 2\). 
By the induction hypothesis,
\[
 \ccD_{\ccl - 1} = (- \myy)^{\ccl} - x^{(\ccl - 2)\cca_1 + (\ccl - 1) \cca_2} \ccz^{\ccc_1 - (\ccl - 2) \ccc_2} + \ccA \ccx^{\ccN} \ccz^{\ccM} \tforsome \ccA \in \cck[\ccx, \ccy, \ccz], 
\]
where \(\ccN = \cca_2 - (\ccl - 2) \cca_1\) and \(\ccM = \ccc_1 - (\ccl - 2) \ccc_2 + 1\).
We also see that
\[
 \ccG^{\ccl} = (- \myy \ccx^{\cca_1} + \ccz^{\ccc_1 + \ccc_2})^{\ccl} = (- \myy)^{\ccl} \ccx^{\ccl \cca_1} + \ccB \ccz^{\ccc_1 + \ccc_2} \tforsome \ccB \in \cck[\ccx, \ccy, \ccz].
\]
Therefore
\begin{align*}
 \ccz^{\ccc_2} \ccD_{\ccl} &=  \ccH \ccD_{\ccl - 1} - \ccx^{\cca_2 - (\ccl - 1) \cca_1} \ccG^{\ccl} \\
 &=  (\ccx^{\cca_1 + \cca_2} - \myy \ccz^{\ccc_2}) \bigl((- \myy)^{\ccl} - x^{(\ccl - 2)\cca_1 + (\ccl - 1) \cca_2} \ccz^{\ccc_1 - (\ccl - 2) \ccc_2} + \ccA \ccx^{\ccN} \ccz^{\ccM} \bigr) \\
 &\phantom{=} - (- \myy)^{\ccl} \ccx^{\cca_1 + \cca_2} - \ccB \ccx^{\cca_2 - (\ccl - 1) \cca_1} \ccz^{\ccc_1 + \ccc_2} \\
 &= - \ccx^{(\ccl - 1) \cca_1 + \ccl \cca_2} \ccz^{\ccc_1 - (\ccl - 2) \ccc_2} + \ccA \ccx^{\cca_1 + \cca_2 + \ccN} \ccz^{\ccM} \\
 &\phantom{=}+ (- \myy)^{\ccl + 1} \ccz^{\ccc_2} + \myy \ccx^{(\ccl - 2) \cca_1 + (\ccl - 1) \cca_2} \ccz^{\ccc_1 - (\ccl - 3) \ccc_2} - \ccA \myy \ccx^{\ccN} \ccz^{\ccM + \ccc_2} \\
 &\phantom{=} - \ccB \ccx^{\cca_1 + (\ccl - 1)  \cca_2} \ccz^{\ccc_1 +  \ccc_2},
\end{align*}
so 
\begin{align*}
 \ccD_{\ccl} &= - \ccx^{(\ccl - 1) \cca_1 + \ccl \cca_2} \ccz^{\ccc_1 - (\ccl - 1) \ccc_2} + \ccA \ccx^{\cca_1 + \cca_2 + \ccN} \ccz^{\ccM - \ccc_2} \\
 &\phantom{=}+ (- \myy)^{\ccl + 1} + \myy \ccx^{(\ccl - 2) \cca_1 + (\ccl - 1) \cca_2} \ccz^{\ccc_1 - (\ccl - 2) \ccc_2} - \ccA \myy \ccx^{\ccN} \ccz^{\ccM} \\
 &\phantom{=} - \ccB \ccx^{\cca_1 + (\ccl - 1)  \cca_2} \ccz^{\ccc_1} \\
 &\equiv ( - \myy)^{\ccl + 1} - \ccx^{(\ccl - 1)\cca_1 + \ccl \cca_2} \ccz^{\ccc_1 - (\ccl - 1) \ccc_2} \pmod{\ccx^{\cca_2 - (\ccl - 1) \cca_1} \ccz^{\ccc_1 - (\ccl - 1) \ccc_2 + 1}}
\end{align*}
since \(\ccM - \ccc_2 = \ccc_1 - (\ccl - 1) \ccc_2 + 1\).
Thus \(\ccD_{\ccl} \in \idealA^{(\ccl)}\) and \eqref{equ:lem-dn-1} holds.
If \(\cca_2 > (\mres - 1) \cca_1\), then \(\cca_2 - (\ccl - 1) \cca_1 \ge \cca_2 - (\mres - 1) \cca_1 \ge 1\),
so \(\ccD_{\ccl} \equiv ( - \myy)^{\ccl + 1} \pmod{\ccx}\).
Suppose that \(\cca_2 = (\mres - 1) \cca_1\).
Recall that \(\ccc_1/\ccc_2 \ge \cca_2/\cca_1 = \mres - 1\).
We show that \(\ccc_1 > (\mres - 1) \ccc_2\).
Assume that \(\ccc_1 = (\mres - 1) \ccc_2\).
Let \(\ccX = \ccx^{\cca_1}\) and \(\ccZ = \ccz^{\ccc_2}\).
Then \(\idealA = (\ccX^{\mres} - \myy \ccZ, \ccZ^{\mres} - \myy \ccX, \myy^2 - \ccX^{\mres - 1} \ccZ^{\mres - 1})\).
Since
\[
 \ccX^{\mres} - \myy \ccZ - (\ccZ^{\mres} - \myy \ccX) = (\ccX - \ccZ)(\ccX^{\mres - 1} + \ccX^{\mres - 2} \ccZ + \cdots + \ccX \ccZ^{\mres - 2} + \ccZ^{\mres - 1} + \myy),
\]
it follows that \(\idealA\) is not a prime ideal, which is a contradiction. Hence \(\ccc_1 > (\mres - 1) \ccc_2\).
Therefore, for \(1 \le \ccl \le \mres\), we see that \(\ccc_1 - (\ccl - 1) \ccc_2 \ge \ccc_1 - (\mres - 1) \ccc_2 \ge 1\),
so \(\ccD_{\ccl} \equiv ( - \myy)^{\ccl + 1} \pmod{\ccz}\).
\end{proof}

\begin{example}
 \comment{Exm.}
\label{sec:org2637241}
\label{orgf829687}
Let \(\ccM\) be as in Example \ref{orgd5c7733}. Then \(\mres[\ccM] = 3\) and \(\cca_2 = 2 = 2 \cdot 1 = (\mres[\ccM] - 1) \cca_1\).
Since
\[
 \ccD_3 = \ccy^4 - \ccx^8 \ccz + 4 \ccx^5 \ccy \ccz^2 - 6 \ccx^2 \ccy^2 \ccz^3 - \ccx^4 \ccz^6  + 4 \ccx \ccy \ccz^7 -  \ccz^{11},
\]
it follows that
\[
 \ccD_3 \equiv \begin{cases}
 \ccy^4 - \ccz^{11}  & \pmod{\ccx}, \\
 \ccy^4 & \pmod{\ccz}.
 \end{cases}
\]
 
\end{example}

\begin{lemma}
 \comment{Lem.}
\label{sec:org6b7777c}
\label{org5b49321}
Let \(\ccM\) be a matrix of type \(1'\), \(\mres = \mres[\ccM]\), and \(\gamma = \gamma_{\mres + 1}\). Then \(\ccD_{\mres + 1} \in \idealA^{(\mres + 1)}\) and
\begin{equation}
\label{equ:lem-dn-2}
 \ccD_{\mres + 1} \equiv - \ccz^{(\mres + 1) \ccc_1 + \mres \ccc_2 + \gamma} \pmod{\ccx}.
\end{equation}
 
\end{lemma}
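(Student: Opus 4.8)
The plan is to run the recursion \eqref{org643446c} one step beyond the range handled in Lemma~\ref{org2a0af15}. Since $\ccM$ is of type $1'$ we have $\ccb_1 = \ccb_2$, so the factor $\ccy^{(\ccn-1)(\ccb_1-\ccb_2)}$ is trivial, and since $(\mres-1)\cca_1 \le \cca_2 < \mres\cca_1$ we have $\alpha_{\mres+1} = \mres\cca_1 - \cca_2 \ge 1$. Hence \eqref{org643446c} for $\ccn = \mres+1$ becomes
\[
 \ccz^{\ccc_2 - \gamma}\,\ccD_{\mres+1} = \ccx^{\alpha_{\mres+1}}\,\ccH\,\ccD_{\mres} - \ccG^{\mres+1},
\]
where $\gamma = \max\set{0,\mres\ccc_2-\ccc_1}$; write $\ccW$ for the right-hand side. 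First I would observe that $\ccW \in \idealA^{(\mres+1)}$: by Lemma~\ref{org2a0af15}, $\ccD_{\mres}\in\idealA^{(\mres)}$, and $\ccH\in\idealA$, so $\ccx^{\alpha_{\mres+1}}\ccH\ccD_{\mres}\in\idealA\cdot\idealA^{(\mres)}\subseteq\idealA^{(\mres+1)}$, while $\ccG^{\mres+1}\in\idealA^{\mres+1}\subseteq\idealA^{(\mres+1)}$.

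The main point requiring care is that $\ccD_{\mres+1}$ is a genuine polynomial, that is, that $\ccz^{\ccc_2-\gamma}$ divides $\ccW$ when $\gamma<\ccc_2$ (when $\gamma\ge\ccc_2$ nothing is to be checked). Note $\ccc_2-\gamma = \ccc_2$ if $\mres\ccc_2\le\ccc_1$ and $\ccc_2-\gamma = \ccc_1-(\mres-1)\ccc_2$ otherwise, so $1\le\ccc_2-\gamma\le\ccc_2$, using $\ccc_1>(\mres-1)\ccc_2$ (established in the proof of Lemma~\ref{org2a0af15}). I would reduce $\ccW$ modulo $\ccz^{\ccc_2-\gamma}$ using the precise congruence \eqref{equ:lem-dn-1} for $\ccD_{\mres}$: its correction terms are divisible by $\ccz^{\ccc_1-(\mres-1)\ccc_2}$, hence by $\ccz^{\ccc_2-\gamma}$, so $\ccD_{\mres}\equiv(-\myy)^{\mres+1}\pmod{\ccz^{\ccc_2-\gamma}}$, while $\ccH\equiv\ccx^{\cca_1+\cca_2}$ and $\ccG\equiv-\ccx^{\cca_1}\myy\pmod{\ccz^{\ccc_2-\gamma}}$. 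Substituting, the two surviving monomials $\ccx^{\alpha_{\mres+1}}\ccx^{\cca_1+\cca_2}(-\myy)^{\mres+1}=\ccx^{(\mres+1)\cca_1}(-\myy)^{\mres+1}$ and $(-\ccx^{\cca_1}\myy)^{\mres+1}$ coincide and cancel, giving $\ccW\equiv0\pmod{\ccz^{\ccc_2-\gamma}}$ and so $\ccD_{\mres+1}\in\ccR$. Membership $\ccD_{\mres+1}\in\idealA^{(\mres+1)}$ then follows from $\ccW\in\idealA^{(\mres+1)}=\idealA^{\mres+1}\ccR_{\idealA}\cap\ccR$ together with $\ccz\notin\idealA$ (indeed $\idealA\cap\cck[\ccz]=0$), which makes $\ccz$ a unit in $\ccR_{\idealA}$, so dividing $\ccW$ by $\ccz^{\ccc_2-\gamma}$ keeps the quotient in $\idealA^{\mres+1}\ccR_{\idealA}\cap\ccR$.

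Finally, for the congruence \eqref{equ:lem-dn-2} I would reduce the displayed identity modulo $\ccx$. Because $\alpha_{\mres+1}\ge1$, the term $\ccx^{\alpha_{\mres+1}}\ccH\ccD_{\mres}$ vanishes mod $\ccx$, and $\ccG\equiv\ccz^{\ccc_1+\ccc_2}\pmod{\ccx}$ yields $\ccG^{\mres+1}\equiv\ccz^{(\mres+1)(\ccc_1+\ccc_2)}$. Thus $\ccz^{\ccc_2-\gamma}\ccD_{\mres+1}\equiv-\ccz^{(\mres+1)(\ccc_1+\ccc_2)}\pmod{\ccx}$; since $\ccz$ is a nonzerodivisor in $\ccR/(\ccx)=\cck[\ccy,\ccz]$, cancelling $\ccz^{\ccc_2-\gamma}$ gives $\ccD_{\mres+1}\equiv-\ccz^{(\mres+1)\ccc_1+\mres\ccc_2+\gamma}\pmod{\ccx}$, as claimed. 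The only genuinely computational step is the monomial cancellation establishing $\ccz^{\ccc_2-\gamma}\mid\ccW$; every other step is formal, resting on the multiplicativity of symbolic powers and on $\ccz$ being a unit in $\ccR_{\idealA}$.
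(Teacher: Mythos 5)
Your proof is correct and follows essentially the same route as the paper: both start from the recursion $\ccz^{\ccc_2-\gamma}\ccD_{\mres+1}=\ccx^{\mres\cca_1-\cca_2}\ccH\ccD_{\mres}-\ccG^{\mres+1}$, use the congruence of Lemma~\ref{org2a0af15} for $\ccD_{\mres}$ to exhibit the cancellation of the $\ccx^{(\mres+1)\cca_1}(-\myy)^{\mres+1}$ terms modulo $\ccz^{\ccc_2-\gamma}$, and then read off the residue modulo $\ccx$ from the surviving term $-\ccG^{\mres+1}$. The only difference is cosmetic: the paper tracks the remainder terms $\ccA$, $\ccB$ explicitly while you argue by congruences, and you spell out the saturation argument ($\ccz\notin\idealA$) for $\ccD_{\mres+1}\in\idealA^{(\mres+1)}$ that the paper leaves implicit.
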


\begin{proof}
 \comment{Proof.}
\label{sec:org3e57188}
By Lemma \ref{org2a0af15},
\[
 \ccD_{\mres} = ( - \myy)^{\mres + 1} + \ccA \ccz^{\ccc_1 - (\mres - 1) \ccc_2} \tforsome \ccA \in \cck[\ccx, \ccy, \ccz]. 
\]
We also see that
\[
 \ccG^{\mres + 1} = (- \myy)^{\mres + 1} \ccx^{(\mres + 1) \cca_1} + \ccB (- \myy) x^{\cca_1} \ccz^{\ccc_1 + \ccc_2} + \ccz^{(\mres + 1) (\ccc_1 + \ccc_2)} \tforsome \ccB \in \cck[\ccx, \ccy, \ccz].
\]
Hence
\begin{align*}
 \ccz^{\ccc_2 - \gamma} \ccD_{\mres + 1} &= \ccx^{\mres \cca_1 - \cca_2} \ccH \ccD_{\mres} - \ccG^{\mres + 1} \\
 &= \ccx^{\mres \cca_1 - \cca_2} \bigl(\ccx^{\cca_1 + \cca_2} - \myy \ccz^{\ccc_2}\bigr) \bigl(( - \myy)^{\mres + 1} + \ccA \ccx^{\ccc_1 - (\mres - 1) \ccc_2} \bigr) \\
 &\phantom{=}- (- \myy)^{\mres + 1} \ccx^{(\mres + 1) \cca_1} - \ccB (- \myy) \ccx^{\cca_1} \ccz^{\ccc_1 + \ccc_2} - \ccz^{(\mres + 1) (\ccc_1 + \ccc_2)} \\
&= \ccA \ccx^{(\mres + 1) \cca_1} \ccz^{\ccc_1 - (\mres - 1) \ccc_2} + \ccx^{\mres \cca_1 - \cca_2} (-\myy) \ccz^{\ccc_2} \bigl(( - \myy)^{\mres + 1} + \ccA \ccx^{\ccc_1 - (\mres - 1) \ccc_2} \bigr) \\
 &\phantom{=}- \ccB (- \myy) \ccx^{\cca_1} \ccz^{\ccc_1 + \ccc_2} - \ccz^{(\mres + 1) (\ccc_1 + \ccc_2)}.
\end{align*}
Since \(\ccc_1 - (\mres - 1) \ccc_2 - (\ccc_2 - \gamma) = \ccc_1 - \mres \ccc_2 + \gamma \ge 0\), it follows that 
\begin{align*}
 \ccD_{\mres + 1} &= \ccA \ccx^{(\mres + 1) \cca_1} \ccz^{\ccc_1 - \mres \ccc_2 + \gamma} + \ccx^{\mres \cca_1 - \cca_2} (-\myy) \ccz^{\gamma} \bigl(( - \myy)^{\mres + 1} + \ccA \ccx^{\ccc_1 - (\mres - 1) \ccc_2} \bigr) \\
 &\phantom{=}- \ccB (- \myy) \ccx^{\cca_1} \ccz^{\ccc_1 + \gamma} - \ccz^{(\mres + 1) \ccc_1 + \mres \ccc_2 + \gamma} \in  \idealA^{(\mres + 1)}
\end{align*}
and that \eqref{equ:lem-dn-2} holds.
\end{proof}

\comment{connect Proof}
\label{sec:orgd048b50}
\vspace{0.5em}

\noindent
\emph{Proof of Theorem~\ref{org319617b}}. 
We divide the proof into two cases.

\comment{connect: case 1}
\label{sec:org968a681}
\paragraph*{Case 1 $(\mres - 1) \cca_1 < \cca_2$.}

By Lemmas \ref{org2a0af15} and \ref{org5b49321},
\begin{alignat*}{3}
 \ccD_{\ccl} &\equiv (- \ccY)^{\ccl + 1} && \pmod{\ccx} \tfor 1 \le \ccl \le \mres, \\ 
 \ccD_{\mres + 1} &\equiv -\ccz^{(\mres + 1) \ccc_1 + \mres  \ccc_2 + \gamma} && \pmod{\ccx}.
\end{alignat*}

\comment{connect: (i)}
\label{sec:orgfb3becf}
\paragraph*{\textup{(i)}}
Let 
\[
 \idealB = (\ccD_{\ccl}) + (\ccG^{\cci} \ccH^{\ccj} : \cci + \ccj = \ccl,\ 0 \le \ccj \le \ccl).
\]
Then \(\idealB \subseteq (\ccD_{\ccl}) + \idealA^{\ccl} \subseteq \idealA^{(\ccl)}\).
We show that \(\length{\ccR}(\ccR/\idealB + (\ccx)) = \binom{\ccl + 1}{2} \ccb (\ccc_1 + 2 \ccc_2)\).
Since 
\begin{alignat*}{3}
 \ccD_{\ccl} &\equiv (- \ccY)^{\ccl + 1} &&   \pmod{\ccx}, \\ 
 \ccG^{\cci} \ccH^{\ccj} &\equiv (-\myy)^\ccj \ccz^{(\ccl - \ccj) \ccc_1 + \ccl \ccc_2} && \pmod{\ccx} \tfor 0 \le \ccj \le \ccl,
\end{alignat*}
it follows from Lemma \ref{orgcc5bc69} that
\[
 \frac{\length{\ccR}(\ccR/\idealB + (\ccx))}{\ccb} = (\ccl + 1) \ccl \ccc_2 + \sum_{\ccj = 1}^{\ccl} \ccj \ccc_1 = \binom{\ccl + 1}{2} (\ccc_1 + 2  \ccc_2).
\]
Therefore \(\idealB = \idealA^{(\ccl)}\) by Lemma \ref{org9fefb4d}.

\comment{connect: (ii)}
\label{sec:org26fdf84}
\paragraph*{\textup{(ii)}}
Let
\[
 \idealB = (\ccF \ccD_{\mres}, \ccH \ccD_{\mres}, \ccG \ccD_{\mres}, \ccD_{\mres + 1}) + (\ccG^{\cci} \ccH^{\ccj} : \cci + \ccj = \mres + 1,\ 1 \le \ccj \le \mres + 1).
\]
Then \(\idealB \subseteq (\ccD_{\mres + 1}) + \idealA^{(\mres)} \idealA \subseteq \idealA^{(\mres + 1)}\).
We show that \(\length{\ccR}(\ccR/\idealB + (\ccx)) = \binom{\mres + 2}{2} \ccb (\ccc_1 + 2 \ccc_2)\).
Let \(\gamma = \gamma_{\mres + 1} = \max \set{0, \mres \ccc_2 - \ccc_1}\). Then
\begin{alignat*}{3}
 \ccF \ccD_{\mres} &\equiv (-\myy)^{\mres + 3} && \pmod{\ccx},\\
 \ccH \ccD_{\mres} &\equiv (-\myy)^{\mres + 2} z^{\ccc_2} && \pmod{\ccx},\\
 \ccG \ccD_{\mres} &\equiv (-\myy)^{\mres + 1} z^{\ccc_1 + \ccc_2} && \pmod{\ccx},\\
 \ccH^{\mres + 1} &\equiv (-\myy)^{\mres + 1} \ccz^{(\mres + 1) \ccc_2} && \pmod{\ccx},\\
 \ccG^{\cci} \ccH^{\ccj} &\equiv (-\myy)^\ccj \ccz^{(\mres + 1 - \ccj) \ccc_1 + (\mres + 1) \ccc_2} && \pmod{\ccx} \tfor 1 \le \ccj \le \mres, \\
 \ccD_{\mres + 1} &\equiv -\ccz^{(\mres + 1) \ccc_1 + \mres  \ccc_2 + \gamma} && \pmod{\ccx}.
\end{alignat*}
Note that
\[
 \min \set{\ccc_1 + \ccc_2, (\mres + 1) \ccc_2} = (\mres + 1)\ccc_2 - \gamma.
\]
It follows that
\begin{align*}
 \frac{\length{\ccR}(\ccR/\idealB + (\ccx))}{\ccb} &=  (\mres + 3) \ccc_2 + (\mres + 2) (\mres \ccc_2 - \gamma) + (\mres + 1) (\ccc_1 + \gamma)  \\
 &\phantom{=} + \Bigl(\sum_{\ccj = 2}^{\mres} \ccj \ccc_1  \Bigr) +  (\ccc_1 - \ccc_2 + \gamma) \\
 &= (\mres^2 + 3 \mres + 2) \ccc_2 + \binom{\mres + 2}{2} \ccc_1 = \binom{\mres + 2}{2}  (\ccc_1 + 2 \ccc_2).
\end{align*}
Therefore \(\idealB = \idealA^{(\mres + 1)}\).

\comment{connect: (iii)}
\label{sec:org0334679}
\paragraph*{\textup{(iii)}}
Let 
\[
 \idealB = (\ccD_2 \ccD_{\mres}, \ccH^2 \ccD_{\mres}, \ccG \ccH \ccD_{\mres}, \ccG^2 \ccD_{\mres}, \ccH \ccD_{\mres + 1}, \ccG \ccD_{\mres + 1}) + (\ccG^{\cci} \ccH^{\ccj} : \cci + \ccj = \mres + 2,\ 2 \le \ccj \le \mres + 2).
\]
Then \(\idealB \subseteq \idealA^{(\mres + 1)} \idealA + \idealA^{(\mres)} \idealA^{(2)} \subseteq \idealA^{(\mres + 2)}\).
We see that
\begin{alignat*}{5}
 \ccD_2 \ccD_{\mres} &\equiv (- \myy)^{\mres + 4} && \pmod{\ccx}, \\
 \ccH^2 \ccD_{\mres} &\equiv (- \myy)^{\mres + 3} z^{2\ccc_2} && \pmod{\ccx}, \\
 \ccG \ccH \ccD_{\mres} &\equiv (- \myy)^{\mres + 2} z^{\ccc_1 + 2\ccc_2} && \pmod{\ccx}, \\
 \ccH^{\mres + 2} &\equiv (- \myy)^{\mres + 2} \ccz^{(\mres + 2) \ccc_2} && \pmod{\ccx}, \\
 \ccG^2 \ccD_{\mres} &\equiv (- \myy)^{\mres + 1} z^{2 \ccc_1 + 2 \ccc_2} && \pmod{\ccx}, \\
 \ccG \ccH^{\mres + 1} &\equiv (- \myy)^{\mres + 1} \ccz^{\ccc_1 + (\mres + 2) \ccc_2} && \pmod{\ccx}, \\
 \ccG^{\cci} \ccH^{\ccj} &\equiv (- \myy)^\ccj \ccz^{(\mres + 2 - \ccj) \ccc_1 + (\mres + 2) \ccc_2} && \pmod{\ccx} \tfor 2 \le \ccj \le \mres, \\
 \ccH \ccD_{\mres + 1} &\equiv \myy \ccz^{(\mres + 1) \ccc_1 + (\mres + 1) \ccc_2 + \gamma} && \pmod{\ccx}, \\
 \ccG \ccD_{\mres + 1} &\equiv - \ccz^{(\mres + 2) \ccc_1 + (\mres + 1) \ccc_2 + \gamma} && \pmod{\ccx}.
\end{alignat*}
Note that
\[
 \min \set{\ccc_1 + 2\ccc_2, (\mres + 2) \ccc_2} = (\mres + 2) \ccc_2 - \gamma
\]
and
\[
 \min \set{2 \ccc_1 + 2 \ccc_2, \ccc_1 + (\mres + 2) \ccc_2} = \ccc_1 + (\mres + 2) \ccc_2 - \gamma.
\]
Hence
\begin{align*}
 \frac{\length{\ccR}(\ccR/\idealB + (\ccx))}{\ccb} &= (\mres + 4) 2 \ccc_2 + (\mres + 3) (\mres \ccc_2 - \gamma) \\
 &\phantom{=} + (\mres + 2) \ccc_1 + (\mres + 1)(\ccc_1 + \gamma) + \Bigl(\sum_{\ccj = 3}^{\mres} \ccj \ccc_1  \Bigr) + 2 (\ccc_1 - \ccc_2 + \gamma) + \ccc_1 \\
 &= (2 (\mres + 4) + \mres (\mres + 3) - 2) \ccc_2 + \binom{\mres + 3}{2} \ccc_1 \\
 &= \binom{\mres + 3}{2} (\ccc_1 + 2 \ccc_2).
\end{align*}
Therefore \(\idealB = \idealA^{(\mres + 2)}\).

\comment{connect: case 2}
\label{sec:org40c83ae}
\paragraph*{Case 2 $(\mres - 1) \cca_1 = \cca_2$.}

By Lemma \ref{org2a0af15},
\[
 \ccD_{\ccl} \equiv (- \ccY)^{\ccl + 1} \pmod{\ccz} \tfor 1 \le \ccl \le \mres.
\]

\comment{connect: (i)}
\label{sec:org0314c23}
\paragraph*{\textup{(i)}}
Let 
\[
 \idealB = (\ccD_{\ccl}) + (\ccG^{\cci} \ccH^{\ccj} : \cci + \ccj = \ccl,\ 0 \le \cci \le \ccl).
\]
We show that \(\length{\ccR}(\ccR/\idealB + (\ccz)) = \binom{\ccl + 1}{2}  (2 \cca_1 + \cca_2) \ccb = \binom{\ccl + 1}{2} (\mres + 1) \cca_1 \ccb\).
Since
\begin{alignat*}{2}
\ccD_{\ccl} & \equiv  (- \myy)^{\ccl + 1} & & \pmod{\ccz}, \\
\ccG^{\cci} \ccH^{\ccj} & \equiv  (- \myy)^{\cci} \ccx^{(\ccl \mres - (\mres - 1) \cci) \cca_1} & & \pmod{\ccz} \tfor 0 \le \cci \le \ccl,
\end{alignat*}
it follows that
\begin{align*}
 \frac{\length{\ccR}(\ccR/\idealB + (\ccz))}{\cca_1 \ccb} &= (\ccl + 1) \ccl + \sum_{\cci = 1}^{\ccl} \cci (\mres - 1)  \\
 &= \binom{\ccl + 1}{2} (\mres + 1).
\end{align*}
Therefore \(\idealB = \idealA^{(\ccl)}\).

\comment{connect: (ii)}
\label{sec:org000fcaa}
\paragraph*{\textup{(ii)}}
Let
\[
 \idealB = (\ccF \ccD_{\mres}, \ccH \ccD_{\mres}, \ccG \ccD_{\mres}) + (\ccG^{\cci} \ccH^{\ccj} : \cci + \ccj = \mres + 1,\ 0 \le \cci \le \mres).
\]
Since
\begin{alignat*}{3}
 \ccF \ccD_{\mres} &\equiv (- \myy)^{\mres + 3} && \pmod{\ccz},\\
 \ccG \ccD_{\mres} &\equiv (- \myy)^{\mres + 2} \ccx^{\cca_1} && \pmod{\ccz},\\
 \ccH \ccD_{\mres} &\equiv (- \myy)^{\mres + 1} \ccx^{\mres \cca_1} && \pmod{\ccz},\\
 \ccG^{\cci} \ccH^{\ccj} &\equiv (- \myy)^\cci \ccx^{(\mres(\mres + 1) -  (\mres - 1) \cci) \cca_1} && \pmod{\ccz} \tfor 0 \le \cci \le \mres,
\end{alignat*}
it follows that
\begin{align*}
 \frac{\length{\ccR}(\ccR/\idealB + (\ccz))}{\cca_1 \ccb} &=  (\mres + 3) + (\mres + 2) (\mres - 1) + (\mres + 1) \mres  + \sum_{\cci = 1}^{\mres} \cci (\mres - 1) \\
 &= 2 \mres^2 + 3 \mres + 1 + (\mres - 1) \frac{(\mres + 1) \mres}{2} \\
 &= (\mres + 1) \Bigl( 2 \mres + 1 + \frac{\mres (\mres - 1)}{2} \Bigr) \\
 &= (\mres + 1) \binom{\mres + 2}{2}.
\end{align*}
Therefore \(\idealB = \idealA^{(\mres + 1)}\).

\comment{connect: (iii)}
\label{sec:org219f8f9}
\paragraph*{\textup{(iii)}}
Let
\[
 \idealB = (\ccD_{2} \ccD_{\mres}, \ccG^2 \ccD_{\mres}, \ccG \ccH \ccD_{\mres}, \ccH^2 \ccD_{\mres}) + (\ccG^{\cci} \ccH^{\ccj} : \cci + \ccj = \mres + 2,\ 0 \le \cci \le \mres).
\]
Since
\begin{alignat*}{3}
 \ccD_2 \ccD_{\mres} &\equiv (- \myy)^{\mres + 4} && \pmod{\ccz},\\
 \ccG^2 \ccD_{\mres} &\equiv (- \myy)^{\mres + 3} \ccx^{2\cca_1} && \pmod{\ccz},\\
 \ccG \ccH \ccD_{\mres} &\equiv (- \myy)^{\mres + 2} \ccx^{(\mres + 1) \cca_1} && \pmod{\ccz}, \\
 \ccH^2 \ccD_{\mres} &\equiv (- \myy)^{\mres + 1} \ccx^{2 \mres \cca_1} && \pmod{\ccz}, \\
 \ccG^{\cci} \ccH^{\ccj} &\equiv (- \myy)^{\cci} \ccx^{(\mres(\mres + 2) - (\mres - 1) \cci) \cca_1} && \pmod{\ccz} \tfor 0 \le \cci \le \mres,
\end{alignat*}
it follows that
\begin{align*}
 \frac{\length{\ccR}(\ccR/\idealB + (\ccz))}{\cca_1 \ccb} &=  (\mres + 4) \cdot 2 + (\mres + 3) (\mres - 1) + (\mres + 2) (\mres - 1)  + (\mres + 1) \mres + \sum_{\cci = 1}^{\mres} \cci (\mres - 1) \\
 &= 2 \mres + 8 + \mres^2 + 2 \mres - 3 + \mres^2 + \mres - 2 + \mres^2 + \mres + (\mres - 1) \frac{(\mres + 1) \mres}{2}\\
 &= 3 \mres^2 + 6 \mres + 3 + (\mres - 1) \frac{(\mres + 1) \mres}{2}\\
 &= (\mres + 1) \Bigl(3 \mres + 3 + \frac{\mres^2 - \mres}{2} \Bigr)\\
 &= (\mres + 1)\binom{\mres + 3}{2}.
\end{align*}
Therefore \(\idealB = \idealA^{(\mres + 2)}\).

\qed

\section{The stable Harbourne conjecture}
\label{sec:org443784f}
\label{org87a2868}
To prove Theorem \ref{org204a6ab}, we show the following two lemmas.

\begin{lemma}
 \comment{Lem.}
\label{sec:org460e56e}
\label{org3d95cf3}
Let \(\ccM\) be a matrix of type \(1'\). For \(1 \le \ccl \le \mres[\ccM] + 1\),
\begin{equation}
\label{equ:lem-exp-dn}
\begin{split}
 \ccx^{- \alpha_{\ccl}} \ccz^{- \gamma_{\ccl}} \ccD_{\ccl} &= \ccz^{- \ccc_2} (\ccH \ccD_{\ccl - 1} - \ccx^{\cca_2 - (\ccl - 1) \cca_1} \ccG^{\ccl}) \\
 &= \ccx^{- \cca_1} (\ccG \ccD_{\ccl - 1} - \ccz^{\ccc_1 - (\ccl - 1) \ccc_2} \ccH^{\ccl}). \\
\end{split}
\end{equation}

Moreover, \(\ccx^{(\ccl - 1) \cca_1} \ccD_{\ccl} \in \idealA^{\ccl}\) and \(\ccz^{(\ccl - 1) \ccc_2} \ccD_{\ccl} \in \idealA^{\ccl}\).
 
\end{lemma}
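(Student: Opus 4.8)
The plan is to establish the two equalities in \eqref{equ:lem-exp-dn} first and then derive the two membership statements by a single simultaneous induction. Since \(\ccM\) is of type \(1'\), we have \(\ccb_1 = \ccb_2 = \ccb\), so the factor \(\ccy^{(\ccl-1)(\ccb_1-\ccb_2)}\) in the defining recursion \eqref{org643446c} equals \(1\); multiplying \eqref{org643446c} by \(\ccx^{-\alpha_{\ccl}}\ccz^{-\gamma_{\ccl}}\) then yields the first equality at once. It therefore remains to show that the two sides of \eqref{equ:lem-exp-dn} agree. Clearing the denominators \(\ccz^{\ccc_2}\) and \(\ccx^{\cca_1}\) and moving all \(\ccD_{\ccl-1}\) terms to one side, this agreement is equivalent to the polynomial identity
\[
 \bigl(\ccx^{\cca_1}\ccH - \ccz^{\ccc_2}\ccG\bigr)\ccD_{\ccl-1} = \ccx^{\cca_2-(\ccl-2)\cca_1}\ccG^{\ccl} - \ccz^{\ccc_1-(\ccl-2)\ccc_2}\ccH^{\ccl},
\]
whose monomial exponents are non-negative for \(1 \le \ccl \le \mres+1\) because \((\mres-1)\cca_1 \le \cca_2\) and \((\mres-1)\ccc_2 \le \ccc_1\). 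The crucial observation is that, using \(\ccG = \ccz^{\ccc_1+\ccc_2}-\myy\ccx^{\cca_1}\) and \(\ccH = \ccx^{\cca_1+\cca_2}-\myy\ccz^{\ccc_2}\), the cross-term collapses to \(\ccx^{\cca_1}\ccH - \ccz^{\ccc_2}\ccG = \ccx^{2\cca_1+\cca_2}-\ccz^{\ccc_1+2\ccc_2}\); write \(P\) for this polynomial.

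I would prove the identity \(P\ccD_{\ccl-1} = \ccx^{\cca_2-(\ccl-2)\cca_1}\ccG^{\ccl}-\ccz^{\ccc_1-(\ccl-2)\ccc_2}\ccH^{\ccl}\) by induction on \(\ccl\). For \(\ccl=1\) this is the direct verification, by expanding \(\ccG\) and \(\ccH\), that \((-\myy)P = \ccx^{\cca_1+\cca_2}\ccG - \ccz^{\ccc_1+\ccc_2}\ccH\). For the inductive step (with \(2 \le \ccl \le \mres+1\), so that \(\alpha_{\ccl-1}=\gamma_{\ccl-1}=0\) and the recursion \(\ccz^{\ccc_2}\ccD_{\ccl-1} = \ccH\ccD_{\ccl-2}-\ccx^{\cca_2-(\ccl-2)\cca_1}\ccG^{\ccl-1}\) holds), I would multiply this recursion by \(P\) and substitute the induction hypothesis for \(\ccD_{\ccl-2}P\). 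After the two \(\ccH^{\ccl}\) contributions cancel, dividing the remaining relation by \(\ccx^{\cca_2-(\ccl-2)\cca_1}\ccG^{\ccl-1}\) reduces it to precisely the already-verified equation \(\ccx^{\cca_1}\ccH - \ccz^{\ccc_2}\ccG = P\), closing the induction. This exponent bookkeeping is the only delicate point, but it is purely mechanical.

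Finally, I would prove \(\ccx^{(\ccl-1)\cca_1}\ccD_{\ccl}\in\idealA^{\ccl}\) and \(\ccz^{(\ccl-1)\ccc_2}\ccD_{\ccl}\in\idealA^{\ccl}\) by simultaneous induction on \(\ccl\), the base case \(\ccl=1\) being \(\ccD_1 = \ccF \in \idealA\). Multiplying the second equality of \eqref{equ:lem-exp-dn} by \(\ccx^{(\ccl-1)\cca_1}\) gives
\[
 \ccx^{(\ccl-1)\cca_1}\ccD_{\ccl} = \ccx^{\alpha_{\ccl}}\ccz^{\gamma_{\ccl}}\ccG\bigl(\ccx^{(\ccl-2)\cca_1}\ccD_{\ccl-1}\bigr) - \ccx^{(\ccl-2)\cca_1+\alpha_{\ccl}}\ccz^{\gamma_{\ccl}+\ccc_1-(\ccl-1)\ccc_2}\ccH^{\ccl};
\]
the first summand lies in \(\ccG\idealA^{\ccl-1}\subseteq\idealA^{\ccl}\) by the induction hypothesis \(\ccx^{(\ccl-2)\cca_1}\ccD_{\ccl-1}\in\idealA^{\ccl-1}\), while the second is a genuine monomial multiple of \(\ccH^{\ccl}\in\idealA^{\ccl}\), the exponent \(\gamma_{\ccl}+\ccc_1-(\ccl-1)\ccc_2\) being non-negative by the definition of \(\gamma_{\ccl}\). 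Hence \(\ccx^{(\ccl-1)\cca_1}\ccD_{\ccl}\in\idealA^{\ccl}\), and the mirror-image computation starting from the first equality of \eqref{equ:lem-exp-dn} yields \(\ccz^{(\ccl-1)\ccc_2}\ccD_{\ccl}\in\idealA^{\ccl}\). The main obstacle throughout is keeping the monomial exponents non-negative, which is exactly what the type-\(1'\) inequalities \((\mres-1)\cca_1\le\cca_2\) and \((\mres-1)\ccc_2\le\ccc_1\) guarantee.
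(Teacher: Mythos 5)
Your proof is correct and takes essentially the same route as the paper: an induction on \(\ccl\) that feeds the recursive definition of \(\ccD_{\ccl - 1}\) back into the two expressions, followed by the identical derivation of the two memberships from the second (resp.\ first) equality of \eqref{equ:lem-exp-dn}. The only real difference is cosmetic: you package the agreement of the two expressions as the identity \((\ccx^{\cca_1} \ccH - \ccz^{\ccc_2} \ccG)\, \ccD_{\ccl - 1} = \ccx^{\cca_2 - (\ccl - 2)\cca_1} \ccG^{\ccl} - \ccz^{\ccc_1 - (\ccl - 2)\ccc_2} \ccH^{\ccl}\) with the collapsed cross-term \(\ccx^{2\cca_1 + \cca_2} - \ccz^{\ccc_1 + 2\ccc_2}\) and run a separate induction on that, whereas the paper substitutes both forms of the induction hypothesis for \(\ccD_{\ccl - 1}\) and checks that both sides reduce to \(\ccG \ccH \ccD_{\ccl - 2} - \ccz^{\ccc_1 - (\ccl - 2)\ccc_2} \ccH^{\ccl} - \ccx^{\cca_2 - (\ccl - 2)\cca_1} \ccG^{\ccl}\).
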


\begin{proof}
 \comment{Proof.}
\label{sec:orgad57ab8}
We show the lemma by induction on \(\ccl\).
Note that \(\alpha_{\ccl} = \gamma_{\ccl} = 0\) for \(\ccl \le \mres[\ccM]\).
If \(\ccl = 1\), then \(\ccD_{1} = \ccF \in \idealA\) and
\[
 \ccx^{\cca_1} \ccF = - \myy \ccG - \ccz^{\ccc_1} \ccH,
\]
so \eqref{equ:lem-exp-dn} holds.
Suppose that \(\ccl \ge 2\). Since
\[
 \ccx^{- \alpha_\ccl} \ccz^{\ccc_2 - \gamma_\ccl} \ccD_{\ccl} = \ccH \ccD_{\ccl - 1} - \ccx^{\cca_2 - (\ccl - 1) \cca_1} \ccG^{\ccl},
\]
it suffices to show that
\begin{equation}
\label{equ:lem-exp-dn-goal}
 \ccx^{\cca_1} \bigl(\ccH \ccD_{\ccl - 1} - \ccx^{\cca_2 - (\ccl - 1) \cca_1} \ccG^{\ccl} \bigr) = \ccz^{\ccc_2} \bigl(\ccG \ccD_{\ccl - 1} - \ccz^{\ccc_1 - (\ccl - 1) \ccc_2} \ccH^{\ccl} \bigr).
\end{equation}
Since \(\alpha_{\ccl - 1} = \gamma_{\ccl - 1} = 0\), it follows from the induction hypothesis that
\begin{align*}
 \ccD_{\ccl - 1} &= \ccz^{-\ccc_2} \bigl (\ccH \ccD_{\ccl - 2} - \ccx^{\cca_2 - (\ccl - 2) \cca_1} \ccG^{\ccl-1} \bigr) \\
    &= \ccx^{-\cca_1} \bigl (\ccG \ccD_{\ccl - 2} - \ccz^{\ccc_1 - (\ccl - 2) \ccc_2} \ccH^{\ccl- 1} \bigr).
\end{align*}
This implies that
\begin{align*}
 \ccx^{\cca_1} \bigl(\ccH \ccD_{\ccl - 1} - \ccx^{\cca_2 - (\ccl - 1) \cca_1} \ccG^{\ccl} \bigr) &= \ccH \bigl (\ccG \ccD_{\ccl - 2} - \ccz^{\ccc_1 - (\ccl - 2) \ccc_2} \ccH^{\ccl - 1}\bigr) - \ccx^{\cca_2 - (\ccl - 2) \cca_1} \ccG^{\ccl} \\
 &= \ccG \ccH \ccD_{\ccl - 2} - \ccz^{\ccc_1 - (\ccl - 2) \ccc_2} \ccH^{\ccl} - \ccx^{\cca_2 - (\ccl - 2) \cca_1} \ccG^{\ccl}
\end{align*}
and
\begin{align*}
 \ccz^{\ccc_2} \bigl(\ccG \ccD_{\ccl - 1} - \ccz^{\ccc_1 - (\ccl - 1) \ccc_2} \ccH^{\ccl} \bigr) &= \ccG \bigl (\ccH \ccD_{\ccl - 2} - \ccx^{\cca_2 - (\ccl - 2) \cca_1} \ccG^{\ccl-1} \bigr) - \ccz^{\ccc_1 - (\ccl - 2) \ccc_2} \ccH^{\ccl} \\
 &= \ccG \ccH \ccD_{\ccl - 2} - \ccz^{\ccc_1 - (\ccl - 2) \ccc_2} \ccH^{\ccl} - \ccx^{\cca_2 - (\ccl - 2) \cca_1} \ccG^{\ccl}.
\end{align*}
Therefore \eqref{equ:lem-exp-dn-goal} and \eqref{equ:lem-exp-dn} hold.
By the induction hypothesis, 
\(\ccx^{(\ccl - 2) \cca_1} \ccD_{\ccl - 1}  \in \idealA^{\ccl - 1}\) and \(\ccz^{(\ccl - 2) \ccc_2} \ccD_{\ccl - 1} \in \idealA^{\ccl - 1}\),
so 
\begin{align*}
 \ccx^{(\ccl - 1) \cca_1} \ccD_{\ccl} &= \ccx^{(\ccl - 1) \cca_1 + \alpha_{\ccl}} \ccz^{\gamma_{\ccl}} \bigl(\ccx^{- \cca_1} (\ccG \ccD_{\ccl - 1} - \ccz^{\ccc_1 - (\ccl - 1) \ccc_2} \ccH^{\ccl})\bigr) \\
 &= \ccz^{\gamma_{\ccl}} \ccG \ccx^{(\ccl - 2) \cca_1 + \alpha_{\ccl}}  \ccD_{\ccl - 1} - \ccx^{(\ccl - 2) \cca_1 + \alpha_{\ccl}} \ccz^{\ccc_1 - (\ccl - 1) \ccc_2 + \gamma_{\ccl}} \ccH^{\ccl} \in \idealA^{\ccl}
\end{align*}
and
\begin{align*}
 \ccz^{(\ccl - 1) \ccc_2} \ccD_{\ccl} &= \ccx^{\alpha_{\ccl}} \ccz^{(\ccl - 1) \ccc_2 + \gamma_{\ccl}} \bigl(\ccz^{- \ccc_2} (\ccH \ccD_{\ccl - 1} - \ccx^{\cca_2 - (\ccl - 1) \cca_1} \ccG^{\ccl})\bigr) \\
 &= \ccx^{\alpha_{\ccl}} \ccH \ccz^{(\ccl - 2) \ccc_2 + \gamma_{\ccl}} \ccD_{\ccl - 1} - \ccx^{\cca_2 - (\ccl - 1) \cca_1 + \alpha_{\ccl}} \ccz^{(\ccl - 2) \ccc_2 + \gamma_{\ccl}} \ccG^{\ccl} \in \idealA^{\ccl}
\end{align*}
\end{proof}

\begin{lemma}
 \comment{Lem.}
\label{sec:org51b5e29}
\label{org9a2d553}
If \(\ccM\) is a matrix of type \(1'\) and \(0 \le \ccl \le \mres[\ccM] + 1\), then
\begin{equation}
\label{equ:lem-containment}
 \ccD_{\ccl} \in \clm^{\delta_{\ccl}} \idealA^{\lfloor \frac{\ccl + 1}{2} \rfloor},
\end{equation}
where
\[
 \delta_{\ccl} = \begin{cases}
 1 & \tif \ccl \text{\ is even or\ } \ccl = \mres[\ccM] + 1, \\
 0 & \totherwise.
 \end{cases}
\]
 
\end{lemma}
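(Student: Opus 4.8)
The plan is to induct on $\ccl$, gaining one power of $\idealA$ over each two-step stride, since the target exponent satisfies $\lfloor(\ccl+1)/2\rfloor=\lfloor(\ccl-1)/2\rfloor+1$. The small cases are already available: $\ccD_0=-\myy\in\clm$ and $\ccD_1=\ccF\in\idealA$ are immediate, $\ccD_2\in\clm\idealA$ is Lemma~\ref{orgce83638}, and $\ccD_3\in\idealA^2$ is Lemma~\ref{org06b5254}; in the boundary subcase $\ccl=3=\mres+1$ (that is, $\mres=2$, so that $\alpha_3=2\cca_1-\cca_2>0$) the sharper $\ccD_3\in\clm\idealA^2$ comes from Theorem~\ref{org5623dd8}. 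The terminal index $\ccl=\mres+1$, where the prefactors $\ccx^{\alpha_\ccl}\ccz^{\gamma_\ccl}$ in the definition of $\ccD_\ccl$ first become nontrivial and themselves contribute the extra factor of $\clm$, I would treat as a separate endpoint using Lemma~\ref{org5b49321}.

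For the inductive step the engine is a two-step recursion obtained by composing the two expressions for $\ccD_{\ccl}$ in~\eqref{equ:lem-exp-dn}: substituting the $\ccx^{-\cca_1}$-form for $\ccD_{\ccl-1}$ into the $\ccz^{-\ccc_2}$-form for $\ccD_{\ccl}$ (both with $\alpha_\ccl=\gamma_\ccl=0$, valid for $\ccl\le\mres$) and clearing $\ccx^{\cca_1}$ gives
\[
 \ccx^{\cca_1}\ccz^{\ccc_2}\ccD_{\ccl} = \ccG\ccH\ccD_{\ccl-2} - \ccx^{\cca_2-(\ccl-2)\cca_1}\ccG^{\ccl} - \ccz^{\ccc_1-(\ccl-2)\ccc_2}\ccH^{\ccl}.
\]
By the inductive hypothesis $\ccD_{\ccl-2}\in\idealA^{\lfloor(\ccl-1)/2\rfloor}$, so $\ccG\ccH\ccD_{\ccl-2}\in\idealA^{\lfloor(\ccl+1)/2\rfloor+1}$; and $\ccG^{\ccl},\ccH^{\ccl}\in\idealA^{\ccl}$ with $\ccl\ge\lfloor(\ccl+1)/2\rfloor+1$ for $\ccl\ge3$. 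Hence $\ccx^{\cca_1}\ccz^{\ccc_2}\ccD_{\ccl}\in\idealA^{\lfloor(\ccl+1)/2\rfloor+1}$.

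The crux, and the step I expect to be the main obstacle, is to descend from this to $\ccD_{\ccl}\in\idealA^{\lfloor(\ccl+1)/2\rfloor}$: one must divide out the monomial $\ccx^{\cca_1}\ccz^{\ccc_2}$ and lose exactly one power of $\idealA$. This is not a formal cancellation, because $\ccx$ and $\ccz$ lie in $\clm$, a prime that becomes embedded in the ordinary powers $\idealA^{k}$ (the very source of the gap between symbolic and ordinary powers); thus $\ccx,\ccz$ are zerodivisors modulo $\idealA^{k}$ and $\idealA^{k+1}:\ccx^{\cca_1}\ccz^{\ccc_2}$ properly contains $\idealA^{k}$, so membership of $\ccx^{\cca_1}\ccz^{\ccc_2}\ccD_{\ccl}$ in a power of $\idealA$ does not by itself locate $\ccD_{\ccl}$. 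To make the division effective I would strengthen the inductive hypothesis to an explicit expansion of $\ccD_{\ccl}$ as a $\cck[\ccx,\ccy,\ccz]$-combination of the degree-$\lfloor(\ccl+1)/2\rfloor$ monomials in $\ccF,\ccG,\ccH$ with monomial coefficients, generalizing the explicit shapes of $\ccD_2$ and $\ccD_3$ in Lemmas~\ref{orgc0004c6} and~\ref{org06b5254}. The one-step degree drop under division is then realized concretely through the relations $\ccx^{\cca_1}\ccF=-\myy\ccG-\ccz^{\ccc_1}\ccH$ and $\ccz^{\ccc_2}\ccF=-\myy\ccH-\ccx^{\cca_2}\ccG$ coming from the columns of $\ccM$, which rewrite $\ccx^{\cca_1}\ccz^{\ccc_2}$ times a degree-$(d+1)$ form as a degree-$d$ form; the technical heart is to check that the right-hand side above is genuinely divisible by $\ccx^{\cca_1}\ccz^{\ccc_2}$ and that the quotient has nonnegative monomial coefficients.

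The factor $\clm^{\delta_\ccl}$ should then be read off from this expansion. For even $\ccl$ every coefficient ought to land in $\clm$ (as already visible for $\ccD_2$), giving $\ccD_{\ccl}\in\clm\idealA^{\ccl/2}$, whereas for odd $\ccl<\mres+1$ the coefficient of the pure power $\ccF^{\lfloor(\ccl+1)/2\rfloor}$ should be a unit, which both places $\ccD_{\ccl}$ in $\idealA^{\lfloor(\ccl+1)/2\rfloor}$ and keeps it out of $\clm\idealA^{\lfloor(\ccl+1)/2\rfloor}$. A subsidiary input is that at least one of the prefactor exponents $\cca_2-(\ccl-1)\cca_1$ and $\ccc_1-(\ccl-1)\ccc_2$ is strictly positive, so the corresponding monomial lies in $\clm$; the degenerate possibility that both vanish simultaneously (forcing $\cca_2=(\mres-1)\cca_1$ and $\ccc_1=(\mres-1)\ccc_2$) is excluded precisely by the primality argument for $\idealA$ used in the proof of Lemma~\ref{org2a0af15}. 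Carrying this $\clm$-bookkeeping through the division, alongside the division itself, is where the real work lies.
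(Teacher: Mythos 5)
You have set up the induction, the base cases, and the two-step recursion $\ccx^{\cca_1}\ccz^{\ccc_2}\ccD_{\ccl} = \ccG\ccH\ccD_{\ccl-2} - \ccx^{\cca_2-(\ccl-2)\cca_1}\ccG^{\ccl} - \ccz^{\ccc_1-(\ccl-2)\ccc_2}\ccH^{\ccl}$ correctly, and you have correctly located the whole difficulty: $\ccx^{\cca_1}\ccz^{\ccc_2}\ccD_{\ccl}\in\idealA^{\ccn+1}$ does not imply $\ccD_{\ccl}\in\idealA^{\ccn}$, let alone $\ccD_{\ccl}\in\clm^{\delta_{\ccl}}\idealA^{\ccn}$. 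But you do not resolve that difficulty; you only propose a strategy --- a closed-form expansion of $\ccD_{\ccl}$ as a combination of the degree-$\ccn$ monomials in $\ccF,\ccG,\ccH$ with monomial coefficients, generalizing Lemmas~\ref{orgc0004c6} and~\ref{org06b5254} --- and then explicitly defer ``the technical heart'' and ``the real work''. No such expansion is produced, its existence and shape for general $\ccl$ are not evident (the paper never establishes one beyond $\ccl=3$), and the $\clm$-bookkeeping is not carried out. The proof is therefore incomplete precisely at the step you yourself flag as the crux, and it is not clear that the route you sketch can be pushed through.

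For comparison, the paper's argument never divides by $\ccx^{\cca_1}$ at all and divides by $\ccz$ only once, as an honest cancellation in the domain $\ccR$. It substitutes the $\ccx^{-\cca_1}$-form of $\ccD_{\ccl-1}$ from Lemma~\ref{org3d95cf3} into $\ccH\ccD_{\ccl-1}$ through the leading term $\ccx^{\cca_1+\cca_2}$ of $\ccH$ (so the $\ccx^{\cca_1}$ cancels identically), then uses the identity $\ccx^{\cca_2}\ccG - \myy\ccH = (\ccx^{\cca_2}\ccz^{\ccc_1}+\myy^2)\ccz^{\ccc_2} - 2\myy\ccx^{\cca_1+\cca_2}$ together with the auxiliary containment $(\ccx^{\cca_2}\ccz^{\ccc_1}+\myy^2)\ccx^{\alpha}\ccD_{\ccl-2}\in\clm^{\delta}\idealA^{\ccn}$ to reduce everything, modulo $\ccz^{\ccc_2-\gamma}\clm^{\delta}\idealA^{\ccn}$, to a single auxiliary element $\ccC_0$; a descending chain $\ccC_{\ccj-1}\equiv\ccG\ccC_{\ccj}$ peels off factors of $\ccG$ until the endpoint $\ccC_{\ccn}$ is shown, via the congruence of Lemma~\ref{org2a0af15}, to be divisible by $\ccz^{\ccc_2-\gamma}$ outright. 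Because every congruence is taken modulo $\ccz^{\ccc_2-\gamma}$ times the target ideal, the final cancellation of $\ccz^{\ccc_2-\gamma}$ is legitimate. Some device of this kind --- one that manufactures the monomial factor explicitly rather than hoping to divide it out of an ideal membership --- is what your sketch is missing. A smaller point: dispatching $\ccl=\mres[\ccM]+1$ separately via Lemma~\ref{org5b49321} does not suffice, since that lemma gives only $\ccD_{\mres+1}\in\idealA^{(\mres+1)}$ and its class modulo $\ccx$; the paper instead runs the same inductive step and uses $\alpha_{\mres+1}\ge 1$ only to place $\ccx^{\alpha}\ccD_{\ccl-2}$ in $\clm\idealA^{\ccn-1}$.
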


\begin{proof}
 \comment{Proof.}
\label{sec:orgf1f7343}
Write \(\ccl = 2 \ccn - \epsilon\) with \(\epsilon \in \set{0, 1}\). 
Then
\[
 \left\lfloor \frac{\ccl + 1}{2} \right\rfloor = \left\lfloor \frac{2\ccn - \epsilon + 1}{2} \right\rfloor = \ccn.
\]
We show that \(\ccD_{\ccl} = \ccD_{2 \ccn - \epsilon} \in \clm^{\delta_{\ccl}} \idealA^{\ccn}\) by induction on \(\ccl\).
Note that \(\mres[\ccM] = \lfloor \cca_2 / \cca_1\rfloor + 1 \ge 2\).
If \(\ccl = 1\), then \(\ccn = 1\) and \(\ccD_1 = \ccF \in \idealA\).
If \(\ccl = 2\), then \(\ccn = 1\) and \(\ccD_2 \in \clm \idealA\) by Lemma \ref{orgce83638}.
Suppose that \(\ccl = 3\); then \(\ccn = 2\).
By Remark \ref{orgbdcb8f2}, \(\ccD_{3} \in \idealA^2\).
If \(\ccl = 3 = \mres[\ccM] + 1\), then \(\mres[\ccM] = 2\) and \(\cca_2 < 2 \cca_1\),
so \(\alpha_3 = \max \set{0, 2 \cca_1 - \cca_2} > 0\) and \(\ccD_3 \in \clm \idealA^2\) by Theorem \ref{orgeb6e5b6}.
Suppose that \(\ccl \ge 4\), and let \(\alpha = \alpha_{\ccl}\), \(\gamma = \gamma_{\ccl}\), and \(\delta = \delta_{\ccl}\).
Note that \(\ccl \ge \ccn + 2\) since \(\ccl - \ccn - 2 = \ccn - \epsilon - 2 \ge 0\).

\comment{connect Step 1}
\label{sec:org40673ac}
\paragraph*{Step 1.}
We show that
\begin{equation}
\label{equ:lem-containment-y2}
 (\ccx^{\cca_2} \ccz^{\ccc_1} + \myy^2) \ccx^{\alpha} \ccD_{\ccl - 2} \in \clm^{\delta} \idealA^{\ccn}.
\end{equation}
We can see that 
\begin{equation}
\label{equ:lem-containment-y2-2}
 \ccx^{\alpha} \ccD_{\ccl - 2} \in \clm^{\delta} \idealA^{\ccn - 1}.
\end{equation}
Indeed, if \(\ccl \le \mres[\ccM]\), then \(\delta = \delta_{\ccl} = \delta_{\ccl - 2}\) and \(\ccD_{\ccl - 2} \in \clm^{\delta} \idealA^{\ccn - 1}\) by the induction hypothesis, 
so \(\ccx^{\alpha} \ccD_{\ccl - 2} \in \clm^{\delta} \idealA^{\ccn - 1}\);
if \(\ccl = \mres[\ccM] + 1\), then \(\alpha \ge 1\), so \(\ccx^\alpha \ccD_{\ccl - 2} \in \clm \idealA^{\ccn - 1} = \clm^{\delta} \idealA^{\ccn - 1}\).
Therefore \eqref{equ:lem-containment-y2-2} holds.
Hence
\[
 \ccF \ccx^{\alpha} \ccD_{\ccl -  2} = (\myy^2 - \ccx^{\cca_2} \ccz^{\ccc_1}) \ccx^{\alpha} \ccD_{\ccl - 2} \in \clm^{\delta} \idealA^{\ccn}.
\]
By Lemma \ref{org3d95cf3}, \(\ccx^{(\ccl - 3) \cca_1} \ccD_{\ccl - 2} \in \idealA^{\ccl - 2}\).
Since \(\cca_2 \ge (\mres[\ccM] - 1) \cca_1 > (\ccl - 3) \cca_1\), it follows that
\[
 \ccx^{\cca_2} \ccz^{\ccc_1} \ccD_{\ccl - 2} \in \clm \idealA^{\ccl - 2} \subseteq \clm \idealA^{\ccn}.
\]
Therefore \eqref{equ:lem-containment-y2} holds.

\comment{connect Step 2}
\label{sec:orgfb5432b}
\paragraph*{Step 2.}
Let 
\[
 \ccC_0 = \ccx^{\alpha} \bigl(x^{\cca_1 + \cca_2} \ccD_{\ccl - 2} - x^{\cca_2 - (\ccl - 2) \cca_1} \ccG^{\ccl - 1} \bigr).
\]
We show that if \(\ccC_0 \in \ccz^{\ccc_2 - \gamma} \idealA^{\ccn}\), then \(\ccD_{\ccl} \in \clm^{\delta} \idealA^{\ccn}\).
By Lemma \ref{org3d95cf3},
\begin{align*}
 \ccx^{- \alpha} \ccz^{\ccc_2 - \gamma} \ccD_{\ccl} &= \ccH \ccD_{\ccl - 1} - \ccx^{\cca_2 - (\ccl - 1) \cca_1} \ccG^{\ccl} \\
 &= (\ccx^{\cca_1 + \cca_2} - \myy \ccz^{\ccc_2}) \ccD_{\ccl - 1} - \ccx^{\cca_2 - (\ccl - 1) \cca_1} \ccG^{\ccl} \\
 &= \ccx^{\cca_2} \bigl(\ccG \ccD_{\ccl - 2} - \ccz^{\ccc_1 - (\ccl - 2) \ccc_2} \ccH^{\ccl - 1} \bigr) \\
 &\phantom{=} - \myy \bigl (\ccH \ccD_{\ccl - 2} - \ccx^{\cca_2 - (\ccl - 2) \cca_1} \ccG^{\ccl - 1} \bigr) \\
 &\phantom{=} - \ccx^{\cca_2 - (\ccl - 1) \cca_1} \bigl(\ccz^{\ccc_1 + \ccc_2} - \myy \ccx^{\cca_1} \bigr) \ccG^{\ccl - 1} \\
 &= (\ccx^{\cca_2} \ccG - \myy \ccH) \ccD_{\ccl - 2} + 2 \myy \ccx^{\cca_2 - (\ccl - 2) \cca_1} \ccG^{\ccl - 1}  \\
 &\phantom{=} - \ccx^{\cca_2} \ccz^{\ccc_1 - (\ccl - 2) \ccc_2} \ccH^{\ccl - 1} - \ccx^{\cca_2 - (\ccl - 1) \cca_1} \ccz^{\ccc_1 + \ccc_2} \ccG^{\ccl - 1}.
\end{align*}
Notice that \(\ccc_1 - (\ccl - 2) \ccc_2 \ge \ccc_2 - \gamma\) since
\[
 (\ccc_1 - (\ccl - 2) \ccc_2) - (\ccc_2 - \gamma) \ge  \ccc_1 - (\ccl - 2) \ccc_2 - \ccc_2 + (\ccl - 1) \ccc_2 - \ccc_1 = 0.
\]
Thus
\[
 \ccz^{\ccc_2 - \gamma} \ccD_{\ccl} \equiv \ccx^{\alpha} \bigl((\ccx^{\cca_2} \ccG - \myy \ccH) \ccD_{\ccl - 2} + 2 \myy \ccx^{\cca_2 - (\ccl - 2) \cca_1} \ccG^{\ccl - 1} \bigr) \pmod {\ccz^{\ccc_2 - \gamma} \clm^{\delta} \idealA^{\ccn}}.
\]
Since
\begin{align*}
 \ccx^{\cca_2} \ccG - \myy \ccH &= \ccx^{\cca_2} \bigl(\ccz^{\ccc_1 + \ccc_2} - \myy \ccx^{\cca_1}) - \myy (\ccx^{\cca_1 + \cca_2} - \myy \ccz^{\ccc_2} \bigr) \\
 &= (\ccx^{\cca_2} \ccz^{\ccc_1} + \myy^2) \ccz^{\ccc_2} - 2 \myy \ccx^{\cca_1 + \cca_2},
\end{align*}
it follows from \eqref{equ:lem-containment-y2} that
\begin{align*}
 \ccz^{\ccc_2 - \gamma} \ccD_{\ccl} &\equiv \ccx^{\alpha} \bigl((\ccx^{\cca_2} \ccz^{\ccc_1} + \myy^2) \ccz^{\ccc_2} \ccD_{\ccl - 2} - 2 \myy \ccx^{\cca_1 + \cca_2} \ccD_{\ccl - 2} + 2 \myy \ccx^{\cca_2 - (\ccl - 2) \cca_1} \ccG^{\ccl - 1} \bigr) \\
 &\equiv \ccx^{\alpha} \bigl(- 2 \myy \ccx^{\cca_1 + \cca_2} \ccD_{\ccl - 2} + 2 \myy \ccx^{\cca_2 - (\ccl - 2) \cca_1} \ccG^{\ccl - 1} \bigr) \\
 &\equiv -2 \myy \ccC_0 \pmod{\ccz^{\ccc_2 - \gamma} \clm^{\delta} \idealA^{\ccn}}.
\end{align*}
Thus if \(\ccC_0 \in \ccz^{\ccc_2 - \gamma} \idealA^{\ccn}\), then \(\ccz^{\ccc_2 - \gamma} \ccD_{\ccl} \in \ccz^{\ccc_2 - \gamma}  \clm^{\delta} \idealA^{\ccn}\), 
so \(\ccD_{\ccl} \in \clm^{\delta} \idealA^{\ccn}\).

\comment{connect Step 3}
\label{sec:orgea5da72}
\paragraph*{Step 3.}
For \(1 \le \ccj \le \ccn\), let
\[
 \ccC_{\ccj} = \ccx^{\alpha}\bigl(x^{\cca_2 - (\ccj - 1) \cca_1} \ccD_{\ccl - \ccj - 2} - x^{\cca_2 - (\ccl - 2) \cca_1} \ccG^{\ccl - \ccj - 1} \bigr).
\]
By induction on \(\ccj\), we show that 
if \(\ccC_{\ccj} \in \ccz^{\ccc_2 - \gamma} \idealA^{\ccn - \ccj}\), then \(\ccC_0 \in \ccz^{\ccc_2 - \gamma} \idealA^{\ccn}\).
If \(\ccj = 0\), then it is obvious.
Suppose that \(\ccj \ge 1\). By Lemma \ref{org3d95cf3},
\begin{align*}
 \ccC_{\ccj - 1} &= \ccx^{\alpha} \bigl(x^{\cca_2 - (\ccj - 2) \cca_1} \ccD_{\ccl - \ccj - 1} - x^{\cca_2 - (\ccl - 2) \cca_1} \ccG^{\ccl - \ccj} \bigr) \\
 &= \ccx^{\alpha} \bigl(x^{\cca_2 - (\ccj - 1) \cca_1} (\ccG \ccD_{\ccl - \ccj - 2} - \ccz^{\ccc_1 - (\ccl - \ccj - 2) \ccc_2} \ccH^{\ccl - \ccj - 1}) - x^{\cca_2 - (\ccl - 2) \cca_1} \ccG^{\ccl - \ccj} \bigr).
\end{align*}
Since \(\ccc_1 - (\ccl - \ccj - 2) \ccc_2 \ge \ccc_1 - (\ccl - 2) \ccc_2 \ge \ccc_2 - \gamma\), it follows that
\begin{align*}
 \ccC_{\ccj - 1} &\equiv \ccx^{\alpha} \ccG(x^{\cca_2 - (\ccj - 1) \cca_1} \ccD_{\ccl - \ccj - 2} - x^{\cca_2 - (\ccl - 2) \cca_1} \ccG^{\ccl - \ccj - 1 })  \\
  &\equiv \ccG \ccC_{\ccj} \pmod {\ccz^{\ccc_2 - \gamma} \idealA^{\ccn - \ccj + 1}}.
\end{align*}
Therefore if \(\ccC_{\ccj} \in \ccz^{\ccc_2 - \gamma} \idealA^{\ccn - \ccj}\), then \(\ccC_{j - 1} \in \ccz^{\ccc_2 - \gamma} \idealA^{\ccn - \ccj + 1}\),
so \(\ccC_0 \in \ccz^{\ccc_2 - \gamma} \idealA^{\ccn}\) by the induction hypothesis.

\comment{connect Step 4}
\label{sec:org3c9a420}
\paragraph*{Step 4.}

We show that \(\ccC_{\ccn} \in (\ccz^{\ccc_2 - \gamma})\).
Note that
\begin{align*}
 \ccC_{\ccn} &= \ccx^{\alpha} \bigl(\ccx^{\cca_2 - (\ccn - 1) \cca_1} \ccD_{\ccl - \ccn - 2} - \ccx^{\cca_2 - (\ccl - 2) \cca_1} \ccG^{\ccl - \ccn - 1}\bigr) \\
 &= \ccx^{\alpha} \bigl(\ccx^{\cca_2 - (\ccn - 1) \cca_1} \ccD_{\ccn - \epsilon - 2} - \ccx^{\cca_2 - (2 \ccn - \epsilon - 2) \cca_1} \ccG^{\ccn - \epsilon - 1} \bigr).
\end{align*}
By Lemma \ref{org2a0af15},
\[
 \ccD_{\ccn - \epsilon - 2} \equiv (- \myy)^{\ccn - \epsilon - 1} \pmod{\ccz^{\ccc_1 - (\ccn - \epsilon - 3) \ccc_2}}.
\]
We see that \(\ccc_1 - (\ccn - \epsilon - 3) \ccc_2 \ge \ccc_2 - \gamma\), so
\[
 \ccD_{\ccn - \epsilon - 2} \equiv (- \myy)^{\ccn - \epsilon - 1} \pmod{\ccz^{\ccc_2 - \gamma}}.
\]
Since
\begin{align*}
 \ccG^{\ccn - \epsilon - 1} &= (\ccz^{\ccc_1 + \ccc_2} - \myy \ccx^{\cca_1})^{\ccn - \epsilon - 1} \\
 &\equiv (- \myy \ccx^{\cca_1})^{\ccn - \epsilon - 1} \pmod{\ccz^{\ccc_2 - \gamma}},
\end{align*}
it follows that
\begin{align*}
 \ccC_{\ccn} &= \ccx^{\alpha} \bigl(\ccx^{\cca_2 - (\ccn - 1) \cca_1} \ccD_{\ccn - \epsilon - 2} - \ccx^{\cca_2 - (2 \ccn - \epsilon - 2) \cca_1} \ccG^{\ccn - \epsilon - 1} \bigr) \\
 &\equiv \ccx^{\alpha} \bigl(\ccx^{\cca_2 - (\ccn - 1) \cca_1} (- \myy)^{\ccn - \epsilon - 1}  - \ccx^{\cca_2 - (2 \ccn - \epsilon - 2) \cca_1} (- \myy \ccx^{\cca_1})^{\ccn - \epsilon - 1} \bigr) \\
 &\equiv 0 \pmod{\ccz^{\ccc_2 - \gamma}}.
\end{align*}
Therefore \(\ccC_{\ccn} \in (\ccz^{\ccc_2 - \gamma})\). This completes the proof.
\end{proof}

\comment{connect}
\label{sec:org099269b}

\vspace{0.5em}

\noindent
\emph{Proof of Theorem~\ref{org204a6ab}}. 
We first show that \(\idealA^{(2 \ccl - 1)} \not \subseteq \clm \idealA^{\ccl}\) for \(1 \le \ccl \le \ccn - 1\).
Since \(\ccn = \lfloor (\mres[\ccM] + 1)/2 \rfloor + 1 \le (\mres[\ccM] + 1)/2 + 1\),
we see that \(2 \ccl - 1 \le 2 \ccn - 3 \le \mres[\ccM]\).
By Lemma \ref{org2a0af15}, \(\ccD_{2\ccl - 1} \equiv (-\myy)^{2\ccl} \pmod{\ccx}\) or \(\ccD_{2\ccl - 1} \equiv (-\myy)^{2\ccl} \pmod{\ccz}\),
so \(\ccD_{2 \ccl - 1} \not \in \clm \idealA^{\ccl}\). Therefore \(\idealA^{(2 \ccl - 1)} \not \subseteq \clm \idealA^{\ccl}\) by Theorem \ref{org319617b}.

We next show that \(\idealA^{(2 \ccn - 1)} \subseteq \clm \idealA^{\ccn}\). 
First, suppose that \(\mres[\ccM]\) is odd; then \(\mres[\ccM] = 2 \ccn - 3\).
Note that \(\ccn \ge 2\).
By Theorem~\ref{org319617b},
\begin{align*}
 \idealA^{(2 \ccn - 3)} &= \idealA^{2 \ccn - 3} + (\ccD_{2 \ccn - 3}),\\
 \idealA^{(2 \ccn - 2)} &= \idealA^{(2 \ccn - 3)} \idealA + (\ccD_{2 \ccn - 2}),\\
 \idealA^{(2 \ccn - 1)} &= \idealA^{(2 \ccn - 2)} \idealA + \idealA^{(2 \ccn - 3)} \idealA^{(2)}.
\end{align*}
It follows from Lemmas \ref{orgce83638} and \ref{org9a2d553} that \(\idealA^{(2)} \subseteq \clm \idealA\),
\begin{align*}
 \idealA^{(2 \ccn - 3)} &= \idealA^{2 \ccn - 3} + (\ccD_{2 \ccn - 3}) 
 \subseteq \idealA^{2 \ccn - 3} + \idealA^{\ccn - 1} = \idealA^{\ccn - 1},
\end{align*}
and
\begin{align*}
 \idealA^{(2 \ccn - 2)} &= \idealA^{(2 \ccn - 3)} \idealA + (\ccD_{2 \ccn - 2}) 
 \subseteq \idealA^{\ccn} + \clm \idealA^{\ccn - 1} = \clm \idealA^{\ccn - 1}.
\end{align*}
Therefore
\begin{align*}
 \idealA^{(2 \ccn - 1)} &= \idealA^{(2 \ccn - 2)} \idealA + \idealA^{(2 \ccn - 3)} \idealA^{(2)} 
 \subseteq \clm \idealA^{\ccn - 1} \idealA + \idealA^{\ccn - 1} \clm \idealA = \clm \idealA^{\ccn}.
\end{align*}
Next, suppose that \(\mres[\ccM]\) is even; then \(\mres[\ccM] = 2 \ccn - 2\).
By Theorem~\ref{org319617b},
\begin{align*}
 \idealA^{(2 \ccn - 2)} &= \idealA^{2 \ccn - 2} + (\ccD_{2 \ccn - 2}),\\
 \idealA^{(2 \ccn - 1)} &= \idealA^{(2 \ccn - 2)} \idealA + (\ccD_{2 \ccn - 1}).
\end{align*}
By Lemma \ref{org9a2d553},
\[
  \idealA^{(2 \ccn - 2)} = \idealA^{2 \ccn - 2} + (\ccD_{2 \ccn - 2}) \subseteq \idealA^{2 \ccn - 2} + \clm \idealA^{\ccn - 1} = \clm \idealA^{\ccn - 1}
\]
and
\[
  \idealA^{(2 \ccn - 1)} = \idealA^{(2 \ccn - 2)} \idealA + (\ccD_{2 \ccn - 1}) \subseteq \clm \idealA^{\ccn - 1} \idealA + \clm \idealA^{\ccn} = \clm \idealA^{\ccn}.
\]
This completes the proof.
\qed

\comment{bibliography}
\label{sec:org4596dc1}

\vspace{0.5em}

\begin{tabular}{@{}l@{}}%
  \textsc{Kosuke Fukumuro}\\
  Department of Mathematics National Institute of Technology, \\
  Kisarazu College, Chiba, Japan\\
  \texttt{fukumuro@n.kisarazu.ac.jp, blackbox@tempo.ocn.ne.jp}
\end{tabular}

\vspace{1em}

\begin{tabular}{@{}l@{}}%
  \textsc{Yuki Irie}\\
  Research Alliance Center for Mathematical Sciences,\\
   Tohoku University, Miyagi, Japan\\
  \texttt{yirie@tohoku.ac.jp, yuki@yirie.info}
\end{tabular}
\end{document}